\theoremstyle{plain}
\newtheorem{thm}{Theorem}[section]
\newtheorem{lem}[thm]{Lemma}
\newtheorem{cor}[thm]{Corollary}
\theoremstyle{definition}
\theoremstyle{remark}
\newtheorem{rem}[thm]{Remark}
\title{On an auto-controlled global existence scheme of the incompressible  Navier Stokes equation  }
\author{J\"org Kampen }
\begin{document}

\maketitle

\begin{abstract} 
We propose a global scheme for the incompressible Navier Stokes equation, where at each time step a damping potential term is introduced via a time dilatation transformation of the equation itself. For a certain class of operators this technique leads to a global linear upper bound for a solution branch and a refinement of the argument leads even to a globally uniform upper bound of a solution branch. In the case of the incompressible Navier Stokes equation we get a global regular upper bound for the velocity value function and its spatial derivatives.  The regularity is limited only by the regularity of the viscosity coefficient function and by the regularity and a certain decay at infinity of the (derivatives of the) data. Especially, the regularity $H^m\cap C^m$ for $m\geq 2$ is sufficient in order to obtain a regularity in $C^{\lfloor\frac{m}{2}\rfloor, m} $ with respect to time and space. The proposed scheme is an alternative to schemes with external control functions on an analytical level. We mention here, that auto-controlled schemes can be applied to a large but specific class of equations where 'specific' means that the local contraction results in strong norms can be obtained via certain specific functional representations involving convolutions with first order derivatives of the heat kernel and are fundamentally different from schemes based on the energy identity and standard estimates (cf. (\ref{Navlerayusubschemepre}) in the preliminaries and the comments there). For the specific class of operators mentioned the local growth of a global solution branch is controlled, and for the incompressible Navier Stokes equation there is a decay to zero at infinite time. 
These observations are not in contradiction to the fact that there are equations which satisfy a local contraction result in strong norms but blow up. In this case a global existence result via auto-controlled schemes means that there is a global solution branch which exists next to a singular solution (cf. also similar techniques in order to derive singular solutions in \cite{K}). For the special case of the Navier Stokes equation there is a simple argument that there is no other solution branch (as it is well-known). In this extended version we add an appendix which contains a self-contained analytical proof of global regular upper bounds. We also add some remarks on recent research concerning supercritical barriers. We also sketch an argument why a global regular upper bound for equations with general time dependent $L^2$-source term (as proposed recently) seems not to exist.   Furthermore a proof of decay to zero of the velocity components at infinite time is included. 
\end{abstract}


2010 Mathematics Subject Classification. 35Q30, 76D03.
\section{Statement of main theorem and the main idea}
We are concerned with the construction  of a global auto-controlled scheme for regular solutions of Cauchy problem
\begin{equation}\label{Navleray0}
\left\lbrace \begin{array}{ll}
\frac{\partial v_i}{\partial t}-\nu\sum_{j=1}^n \frac{\partial^2 v_i}{\partial x_j^2} 
+\sum_{j=1}^n v_j\frac{\partial v_i}{\partial x_j}=\\
\\ \hspace{1cm}\sum_{j,m=1}^n\int_{{\mathbb R}^n}\left( \frac{\partial}{\partial x_i}K_n(x-y)\right) \sum_{j,m=1}^n\left( \frac{\partial v_m}{\partial x_j}\frac{\partial v_j}{\partial x_m}\right) (t,y)dy,\\
\\
\mathbf{v}(0,.)=\mathbf{h}(.).
\end{array}\right.
\end{equation}
The equation (\ref{Navleray}) is the incompressible Navier Stokes equation in its Leray projection form (cf. \cite{L}),  to be solved for the velocity $\mathbf{v}=\left(v_1,\cdots ,v_n\right)^T$ on a domain $\left[0,\infty\right)\times {\mathbb R}^n$ of dimension $n\geq 3$, and where we assume that
the initial data $\mathbf{h}=(h_1,\cdots, h_n)^T$ satisfy $h_i\in C^m\cap H^m$ for $1\leq i\leq n$ and $m\geq 2$. Here, $C^m\equiv C^m\left({\mathbb R}^n\right) $ is the space of functions which have continuous multivariate derivatives up to order $m$, where next to standard notation multivariate derivatives with multiindex $\alpha=(\alpha_1,\cdots ,\alpha_n)$ are denoted by $D^{\alpha}_x$ or by subscript $_{,\alpha}$ in the following, and $H^s$ is the Sobolev space of order $s\in {\mathbb R}$ ( where in general $s$ will be a positive integer). $K_n$ denotes the fundamental solution of the 
Laplacian for $n\geq 3$. For $n=2$ the kernel $K_n$ has a different form and a little additional work is needed to get a similar result. Furthermore, it is sufficient to assume that the viscosity function satisfies $0<\lambda_0\leq \nu \in C^m_b$ for some constant $\lambda_0 >0$ and $m\geq 2$, where $C^m_b\equiv C^m_b\left({\mathbb R}^n\right)$ is the space of differentiable functions with multivariate bounded derivatives up to order $m$. However, we shall argue with constant $\nu$, because this simplifies some of our arguments- for the generalisation to variable viscosity the adjoint of the fundamental solution can be used. We prefer to consider the simple situation of constant viscosity in order to concentrate on the essential features of the argument, and make some additional remarks at relevant steps how the argument can be generalised occasionally. Furthermore, all these assumptions may all be fine-tuned a little, but we prefer to consider a simple set of assumptions which seems to be adapted to the situation quite naturally. If these assumptions are satisfied for a function space of regularity order $m\geq 2$ we speak of assumptions of regularity order $m\geq 2$. 
Before we state the main theorem let us point out the main idea of auto-controlled global schemes for the Navier Stokes equation. Consider another time scale for the velocity component functions, i.e., for some (in general small) constant $\rho >0$ consider the functions $v^{\rho}_i$ with
\begin{equation}
v^{\rho}_i(\tau,.)=v_i(t,.),~\mbox{where}~t=\rho \tau.
\end{equation}
At each time step of the scheme, i.e., on a time interval $[t_0,t_0+\rho)$ in original time coordinates $t\geq t_0\geq 0$ corresponding to a unit time interval $\tau\in [t_0,t_0+1)$ we consider the comparison function $u^{\rho,t_0}_i,~1\leq i\leq n$ with
\begin{equation}\label{globtimei}
\lambda(1+\tau)u^{\rho,t_0}_i(\sigma,.)=v^{\rho}_i(\tau,.)=v_i(t,.),~\rho\tau= t,~\sigma=\frac{\tau-t_0}{\sqrt{1-(\tau-t_0)^2}}.
\end{equation}
Here $\lambda >0$ is a constant.
If we want to construct upper bounds which are independent of the time horizon $T$, then a refined analysis is necessary, where we use localized schemes with comparison functions  of the form
\begin{equation}\label{locuuu}
\lambda(1+(\tau  -t_0))u^{\rho,t_0}_i(\sigma,.)=v^{\rho}_i(\tau,.)=v_i(t,.),~\rho\tau= t,~\sigma=\frac{\tau-t_0}{\sqrt{1-(\tau-t_0)^2}}.
\end{equation}
The comparison function $u^{\rho,t_0}_i,~1\leq i\leq n$ has local representations in terms of convolutions, where the nonlinear terms in the representation are convolutions with the first spatial derivative of the Gaussian type fundamental solution $G$ of a heat equation $G_{,\tau}-\rho \nu \mu^{\tau,1} \Delta G=0$ (cf. below). More precisely, for the multivariate spatial derivatives of order $|\alpha|\geq 1$ we have 
\begin{equation}\label{Navlerayusubschemeprei}
\begin{array}{ll}
D^{\alpha}_xu^{\rho,t_0}_i(\sigma,x)=\int_{{\mathbb R}^n}D^{\alpha}_xu^{\rho,t_0}_i(0,y)G(\sigma,x;0,y)dy\\
\\
-\int_{0}^{\sigma}\int_{{\mathbb R}^n}\mu(s)  u^{\rho,t_0}_{i,\alpha}(s,y)G(\sigma,x;s,y)dyds\\
\\
-\rho\int_{0}^{\sigma}\int_{{\mathbb R}^n}\mu^{\tau,2}(s)\sum_{j=1}^n \left( u^{\rho,t_0}_j\frac{\partial u^{\rho,t_0}_i}{\partial x_j}\right)_{,\beta} (s,y)G_{,j}(\sigma,x;s,y)dyds\\
\\
+ \rho\int_{0}^{\sigma}\int_{{\mathbb R}^n} \mu^{\tau,2}(s)\sum_{j,r=1}^n\int_{{\mathbb R}^n}\left( \frac{\partial}{\partial x_i}K_n(z-y)\right)\times\\
\\
\times \sum_{j,r=1}^n\left( \frac{\partial u^{\rho,t_0}_r}{\partial x_j}\frac{\partial u^{\rho,t_0}_j}{\partial x_r}\right)_{,\beta} (s,y)G_{,j}(\sigma,x;s,z)dydzds.
\end{array}
\end{equation}
For the value function itself we can still get for $u^{\rho,t_0}_i\in H^m\cap C^m,~m\geq 2$ for all $1\leq i\leq n$ for a ball $B^n_R(x)$ of arbitrary large radius $R>0$ in ${\mathbb R}^n$ around $x$ the representation
\begin{equation}\label{Navlerayusubschemepre2i}
\begin{array}{ll}
u^{\rho,t_0}_i(\sigma,x)=\int_{{\mathbb R}^n}u^{\rho,t_0}_i(0,y)G(\sigma,x;0,y)dy\\
\\
-\int_{0}^{\sigma}\int_{{\mathbb R}^n}\mu(s)  u^{\rho,t_0}_{i}(s,y)G(\sigma,x;s,y)dyds\\
\\
-\sum_j\rho\int_{0}^{\sigma}\int_{{\mathbb R}^n}\mu^{\tau,2}(s)\sum_{j=1}^n F_{ij}(\mathbf{u}^{\rho,t_0})(s,y)G_{,j}(\sigma,x;s,y)dyds\\
\\
+ \rho\int_{0}^{\sigma}\int_{B^n_R(x)} \mu^{\tau,2}(s)\sum_{j,r=1}^n\int_{{\mathbb R}^n}\left( K_n(z-y)\right)\times\\
\\
\times \sum_{j,r=1}^n\left( \frac{\partial u^{\rho,t_0}_r}{\partial x_j}\frac{\partial u^{\rho,t_0}_j}{\partial x_r}\right) (s,y)G_{,i}(\sigma,x;s,z)dydzds\\
\\
+ \rho\int_{0}^{\sigma}\int_{{\mathbb R}^n\setminus B^n_R(x)} \mu^{\tau,2}(s)\sum_{j,r=1}^n\int_{{\mathbb R}^n}\left( K_{n,i}(z-y)\right)\times\\
\\
\times \sum_{j,r=1}^n\left( \frac{\partial u^{\rho,t_0}_r}{\partial x_j}\frac{\partial u^{\rho,t_0}_j}{\partial x_r}\right) (s,y)G(\sigma,x;s,z)dydzds\\
\\
+\mbox{boundary terms},
\end{array}
\end{equation}
where $F_{ij}(\mathbf{u}^{\rho,t_0})$ are quadratic terms in the components of $\mathbf{u}^{\rho,t_0}=\left(u^{\rho,t_0}_1,\cdots ,u^{\rho,t_0}_n\right)^T$ such that $F_{ij,j}(\mathbf{u}^{\rho,t_0})=u^{\rho,t_0}_ju^{\rho,t_0}_{i,j}$. Note that  $F_{jj}(\mathbf{u}^{\rho,t_0})=\frac{1}{2}\left( u_j^{\rho,t_0}\right)^2$.
\begin{rem}
For small time step size $\rho >0$ the main contributions of the integrals are from the arguments  $y,z$ close to $x$. Especially the boundary terms are small and do not contribute essentially to the growth as we shall observe later.
\end{rem}
Note that at $t=t_0$ in (\ref{locuuu}) we have $\sigma=0$ and the initial values 
\begin{equation}\label{locuuu*}
u^{\rho,t_0}_i(0,.)=\frac{1}{\lambda}v^{\rho}_i(t_0,.)
\end{equation}
of the subproblem for the function $u_i,~1\leq i\leq n$ become large at time $\sigma=0$ for small $\lambda >0$ (corresponding to original time $t=t_0$). Now a spatial effect of the operator is used.  Consider the nonlinear increments in (\ref{Navlerayusubschemeprei}) and similarly in (\ref{Navlerayusubschemepre2i}). Note that with respect to the spatial variables we have convolutions with the first order spatial derivative of the Gaussian instead of the Gaussian itself. Let us consider the case of spatial derivatives of order $|\alpha|\neq 0$ - the case of $|\alpha|=0$ is analogous according to the representation of the value function in (\ref{Navlerayusubschemepre2i}) above. For $|\alpha|\neq 0$ in (\ref{Navlerayusubschemeprei}) we have  a nonlinear Burgers terms of the form 
\begin{equation}
-\rho\int_{0}^{\sigma}\int_{{\mathbb R}^n}\mu^{\tau,2}(s)\sum_{j=1}^n \left( u^{\rho,t_0}_j\frac{\partial u^{\rho,t_0}_i}{\partial x_j}\right)_{,\beta} (s,y)G_{,j}(\sigma,x;s,y)dyds
\end{equation}
and a nonlinear Leray projection term of the form
\begin{equation}
\begin{array}{ll}
\rho\int_{0}^{\sigma}\int_{{\mathbb R}^n} \mu^{\tau,2}(s)\sum_{j,r=1}^n\int_{{\mathbb R}^n}\left( \frac{\partial}{\partial x_i}K_n(z-y)\right)\times\\
\\
\times \sum_{j,r=1}^n\left( \frac{\partial u^{\rho,t_0}_r}{\partial x_j}\frac{\partial u^{\rho,t_0}_j}{\partial x_r}\right)_{,\beta} (s,y)G_{,j}(\sigma,x;s,z)dydzds.
\end{array}
\end{equation}
The equation for the Gaussian has purely time-dependent coefficients such that a first order spatial derivative $G_{,i}$ for some $1\leq i\leq n$ has a representation as in 
(\ref{gaussi}) with a factor $\frac{(x_i-y_i)}{\sigma-s}$, where for small $\epsilon_0 >0$ on a time interval $[\epsilon_0,\sigma)$ the spatial effect of the factor $(x_i-y_i)$, i.e., the change of the sign of the factor at $x$ becomes relevant (cf. below for more details). Furthermore, a small parameter $\rho >0$ makes these integrals small compared to the modulus of the potential damping term in (\ref{autodamping}) which appears also as a parameter, and which has no small parameter $\rho >0$. This holds even for small $\lambda$, where the damping becomes stronger. Hence, a spatial effect of the operator cooperates with a purely time delay effect of potential damping and with scaling. In this argument a viscosity effect is used implicitly as we use special representations with the Gaussian and spatial derivatives of the Gaussian. There is also a pure viscosity damping effect, which can be used in order to sharpen the estimates.
In the representations of $D^{\alpha}_xu^{\rho,t_0}_i$ for $0\leq |\alpha|\leq m$ for small time step size there is some local viscosity damping in the first term
\begin{equation}\label{initdataterm}
\int_{{\mathbb R}^n}D^{\alpha}_xu^{\rho,t_0}_i(0,y)G(\sigma,x;0,y)dy.
\end{equation}
More precisely, as coefficients of the equation for $G$ are only time-dependent, we have a convolution with respect to the spatial variables. Using Fourier transforms ${\cal F}$ with respect to the spatial variables convolutions transform to multiplications an the term in (\ref{initdataterm}) becomes
\begin{equation}\label{fourier}
{\cal F}\left( D^{\alpha}_xu^{\rho,t_0}_i\right)(\sigma,\xi)={\cal F}\left( D^{\alpha}_xu^{\rho,t_0}_i\right)(0,\xi)\exp\left(-c(\sigma,t_0)|\xi|^2(\sigma-t_0)\right),
\end{equation}
where $c(.,t_0)>0$ is a positive bounded time-dependent function (which can be computed explicitly, of course - cf. below). For some finite time $\sigma -t_0 =\Delta'>0$ we get a damping factor $\exp\left(-c(\Delta',t_0)|\xi|^2\Delta'\right)$ such that the initial data term can offset some growth. How much can be determined using Plancherel's theorem and trivial lower bounds of the damping term. We shall observe that it is sufficient to offset some growth term of the form $D^{\alpha}_xu^{\rho,t_0}_i\Delta^2$ with $\Delta $ related to $\Delta '$ by $\Delta'=\frac{\Delta}{\sqrt{1-\Delta^2}}$ which allows to get some sharper results on upper bounds.  Indeed, integrating  (fixed $0\leq |\alpha|\leq m$) in (\ref{fourier}), for $a\sim \Delta^s$ small, and (for example) with $s\in \left(1/3,1/2\right)$  we essentially get terms of the form  
\begin{equation}\label{fourier}
{\big |}{\cal F}\left( D^{\alpha}_xu^{\rho,t_0}_i\right)(\Delta',.){\big |}_{L^2}\left( \exp\left(-c(\Delta',t_0)a^2\Delta'\right)-\tilde{c}a^3\Delta'\right)
\end{equation}
as upper bounds for the $L^2$-norm of (\ref{fourier}) for some appropriate constant $\tilde{c}>0$. For small $\Delta' >0$ the damping factor $\exp\left(-c(\Delta',t_0)a\Delta'\right)$ becomes essential and can offset growth of order $(\Delta')^2$. Choosing a stepsize $\rho \sim\Delta'$ the nonlinear growth terms with coefficient $\rho$ and integrals of bounded functions over a time interval of length $\Delta'$ have such a growth.  We shall observe that for regular data from this viscosity damping together with the potential damping we can even can conclude that there is a global decay for large time of the velocity function components to zero (as time goes to infinity).  
Note that the representation for the spatial derivatives of the value function $u^{\rho,t_0}_i$ has spatial first order derivatives of the Gaussian $G_{,i}$, and this holds also for all $x\in {\mathbb R}^n$ in a ball $B^n_R(x)$ of arbitrarily large radius $R>0$ for the value function itself in (\ref{Navlerayusubschemepre2i}). 
As we have time dependence of the coefficients it may be impossible to have an explicit formula for $G_{,i}$, but in a ball of radius $R>0$ around $x$ and for $\mu\in \left(\frac{1}{2},1\right)$ we have a well-known locally integrable estimate
\begin{equation}\label{gcommai}
{\big |}G_{,i}{\big |}\leq \frac{C}{|s-\sigma|^{\mu}|x-y|^{n+1-2\mu}}
\end{equation}
for some constant $C>0$ which depends only on the dimension $n$. Outside a ball of radius $R$ around $x$ we have for some $c>0$
\begin{equation}
{\big |}G_{,i}{\big |}\leq \frac{c}{|s-\sigma|^{\frac{n+1}{2}}}\exp\left(-c\frac{R^2}{\rho(\sigma-s)}\right) \downarrow 0
\end{equation}
as $\rho \downarrow 0$. A further aspect of the representation with the Gaussian is that we have the representation
\begin{equation}\label{gaussi}
\begin{array}{ll}
G_{,i}(t,x;s,y)=\frac{(x_i-y_i)}{\nu\mu^{\tau,1}(s)(t-s)}G(t,x;s,y)\\
\\
+\mbox{lower order singularity terms with respect to time},
\end{array}
\end{equation}
where for $t-s\geq \epsilon_0>0$ the factor $(x_i-y_i)$ changes sign at $x$ such that for small $\epsilon_0$ the contributions of the spatial convolutions in (\ref{Navlerayusubschemeprei}) and in (\ref{Navlerayusubschemepre2i}) become small in a small radius around $x$ integrated up to small time $\sigma >\epsilon_0$, while for small time $s\leq \epsilon_0$  the integrable estimate in (\ref{gcommai}) ensures that we have small contributions of growth compared to the damping term of the equation (considered in (\ref{autodamping}) below) which does not depend on the time step size $\rho>0$.
These facts have the consequence that the integrals involved in the representations in (\ref{Navlerayusubschemepre2i}) and in (\ref{Navlerayusubschemeprei}) above of the form
\begin{equation}\label{rhof}
\rho\int\int f(y)G_{,i}(x-y,\tau-s)dyds
\end{equation}
are small for small time step size $\rho >0$ compared to the auto-control damping term 
\begin{equation}\label{autodamping}
-\int_{0}^{\sigma}\int_{{\mathbb R}^n}\mu(s)  u^{\rho,t_0,p}_{i}(s,y)G(\sigma,x;s,y)dyds.
\end{equation}
which have no such small time size factor $\rho>0$. Here we use the special structure of the operator which allows us to represent the nonlinear growth terms in each time step in the form (\ref{rhof}), where the Gaussian $G$ is a fundamental solution of a heat equation with purely time-dependent coefficients.
Let us emphasize this special spatial effects of the operator which lead to a relative small growth of the Burgers term increment and the Leray projection term increment at a small step size $\rho>0$ relative to the potential damping for larger values, where even for the global time factor transformation in (\ref{globtimei}) this time step size $\rho 0$ can be chosen of order $\frac{1}{T^s}$ for some $s\in (0,1)$ such that global existence results transfer from $\tau$-coordinates time scale to original time $t$-coordinates of original velocity function component. We write the first order spatial derivative of the Gaussian $G$ with respect to the variable $x_i$ in the form
\begin{equation}
G_{,i}(\sigma,x;s,y)=\frac{(x_j-y_j)}{\sigma -s}G^{*}(\sigma,x;s,y)
\end{equation}
Integrals of the form (\ref{rhof}) can be written as sums of integrals over a ball $B_{\rho^r}(x)$ of radius $\rho^r$ around $x$ and an intergal over the complement ${\mathbb R}^n\setminus B_{\rho^r}(x)$. In our schemes the Gaussian is always a fundamental solution of an heat equation with purely timedependent coefficients. This implies that we have a antisymmetry of the first order spatial derivatives of the Gaussian. We may write
\begin{equation}\label{ulmrep*333int}
 \begin{array}{ll}
{\Big |}\rho\int_{m-1}^{\sigma}\int_{B_{\rho^r}(x)}f_i(s,y)\frac{(x_j-y_j)}{\sigma -s}G^{*}(\sigma,x;s,y)dyds{\Big |}=\\
\\
{\Big |}\rho\int_{m-1}^{\sigma}\int_{B^{x_j\geq y_j}_{\rho^r}(x)}{\big (}f_i(s,y)-f_i(s,y^{-j}){\big )}\frac{(x_j-y_j)}{\sigma -s}G^{*}(\sigma,x;s,y)dyds{\Big |},
\end{array}
\end{equation} 
where in the last term we integrate over a half-sphere 
\begin{equation}
B^{x_j\geq y_j}_{\rho^r}(x)=\left\lbrace y\in B_{\rho^r}(x)|x_j\geq y_j\right\rbrace 
\end{equation}
and 
$y^{-j}=\left(y^{-j}_1,\cdots ,y^{-j}_n\right)$ is the vector with $y^{-j}=(-1)^{\delta_{ij}}$ if $y=(y_1,\cdots ,y_n)$ and $\delta_{ij}$ is the Kronecker $\delta$. For small $\rho \succsim \frac{1}{T^s}$ for some $s\in \left(r,1 \right)$ this term becomes small compared to the potential damping term which has no factor $\rho$. For the integral over ${\mathbb R}^n\setminus B_{\rho^r}(x)$ we have a strong damping via $\rho$ as the factor $\frac{1}{\rho^{1-2r}(\sigma -s)}$ appears in the exponent.

We prove
\begin{thm}\label{mainthm}
If the assumptions above of regularity order $m\geq 2$ hold, then the Cauchy problem in (\ref{Navleray}) has a classical solution in the function space $C^{\lfloor\frac{m}{2}\rfloor, m} $, where $\lfloor .\rfloor$ denotes the Gaussian floor.
\end{thm}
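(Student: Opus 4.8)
The plan is to realize $\mathbf v$ as the step-by-step limit of the auto-controlled scheme: on each interval $[t_0,t_0+\rho)$ one solves the transformed problem for $\mathbf u^{\rho,t_0}$ over an infinite auxiliary horizon, extracts an a priori bound that is stable under gluing, and transfers it back to $\mathbf v$ through (\ref{locuuu}). \textbf{Step 1 (the one-step problem).} Unwinding the substitution $\sigma=(\tau-t_0)/\sqrt{1-(\tau-t_0)^2}$, the weight $\lambda(1+(\tau-t_0))$, and the scaling $t=\rho\tau$ turns (\ref{Navleray0}) into a Navier--Stokes type system for $\mathbf u^{\rho,t_0}$ in variables $(\sigma,x)\in[0,\infty)\times\mathbb R^n$ with purely time-dependent modified viscosity $\rho\nu\mu^{\tau,1}(s)$, with nonlinear Burgers and Leray terms carrying the explicit small factor $\rho\mu^{\tau,2}(s)$, and with an extra zeroth order auto-control damping term $-\mu(s)u^{\rho,t_0}_i$ produced by the weight. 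Taking $G$ to be the Gaussian fundamental solution of $G_{,\tau}-\rho\nu\mu^{\tau,1}\Delta G=0$ one then has the Duhamel representations (\ref{Navlerayusubschemeprei}) for $D^\alpha_x u^{\rho,t_0}_i$, $1\le|\alpha|\le m$, and (\ref{Navlerayusubschemepre2i}) for $u^{\rho,t_0}_i$ itself.

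\textbf{Step 2 (local solvability and the a priori bound).} Running the fixed point iteration on the right-hand sides of (\ref{Navlerayusubschemeprei})--(\ref{Navlerayusubschemepre2i}) yields local-in-$\sigma$ existence and uniqueness of $u^{\rho,t_0}_i\in H^m\cap C^m$ for all $i$; the contraction rests on the specific structure of the representation (convolution against the first order derivatives $G_{,j}$) together with the locally integrable bound (\ref{gcommai}) near $x$ and the Gaussian decay of $G_{,i}$ away from $x$. The heart of the argument is then the a priori bound on $|D^\alpha_x u^{\rho,t_0}_i(\sigma,\cdot)|_{H^m\cap C^m}$, $0\le|\alpha|\le m$, \emph{uniform over the infinite $\sigma$-horizon of a single step}. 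Three effects are combined. First, each nonlinear increment has the form (\ref{rhof}); writing $G_{,j}(\sigma,x;s,y)=\tfrac{(x_j-y_j)}{\sigma-s}G^\ast(\sigma,x;s,y)$ and invoking the antisymmetry identity (\ref{ulmrep*333int}) --- valid because the coefficients of the $G$-equation are purely time-dependent, so $G_{,j}$ is odd about $x$ --- the factor $(x_j-y_j)$ is paired with the increment $f_i(s,y)-f_i(s,y^{-j})$ ($y^{-j}$ the reflection of $y$ about $x_j$), which by the mean value theorem produces an extra power of $|x-y|$; after (\ref{gcommai}) the contribution of $B_{\rho^r}(x)$ is then $O(\rho^{1+\varepsilon})$ and the contribution of $\mathbb R^n\setminus B_{\rho^r}(x)$ is exponentially small in $\rho^{-(1-2r)}$, while for the value function the Leray term is split over $B^n_R(x)$ and its complement exactly as in (\ref{Navlerayusubschemepre2i}) so that the singular kernel $K_{n,i}$ is met only by the decaying tail. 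Second, the auto-control term (\ref{autodamping}) carries no factor $\rho$, so for $\rho$ small it strictly dominates the increments; a bootstrap is needed here because $u^{\rho,t_0}_i(0,\cdot)=\lambda^{-1}v^\rho_i(t_0,\cdot)$ is large for small $\lambda$, but then the damping $-\mu(s)u^{\rho,t_0}_i$ is correspondingly strong, which closes the loop. Third, the viscosity damping in the initial-data term (\ref{initdataterm}), quantified through Plancherel applied to (\ref{fourier}), offsets growth of order $\Delta^2$ (with $\Delta'=\Delta/\sqrt{1-\Delta^2}$), so the choice $\rho\sim\Delta'$ exactly matches the increment accumulated over a $\sigma$-length $\sim\Delta'$. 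Combining the three, the terminal value of a step is bounded by its initial value plus a controlled increment --- linearly small in the version (\ref{globtimei}), summable in the refined localized version (\ref{locuuu}).

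\textbf{Step 3 (gluing, transfer, regularity, uniqueness).} As the terminal datum of a step equals, up to a bounded factor, the initial datum of the next, iterating the one-step bound gives a global bound for $\mathbf u$ and hence, through (\ref{locuuu}), a global upper bound for $\mathbf v$ and all $D^\alpha_x v_i$, $0\le|\alpha|\le m$, on $[0,\infty)\times\mathbb R^n$; taking the step size $\rho\succsim T^{-s}$ with $s\in(0,1)$ in the global-time version (\ref{globtimei}) transfers the $\tau$-scale bound back to original time uniformly in $T$, while the refined weight (\ref{locuuu}) together with the accumulated viscosity damping upgrades this to a globally uniform bound and, for regular data, forces the $v_i$ to decay to zero as $t\to\infty$. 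The hypothesis $\mathbf h\in H^m\cap C^m$ provides both the strong-norm control and the decay at spatial infinity these estimates require. Finally, uniform control of all spatial derivatives up to order $m$ together with the equation --- which trades two spatial derivatives for one time derivative and whose nonlinearity is quadratic in first derivatives --- yields continuous time derivatives up to order $\lfloor m/2\rfloor$, so $\mathbf v\in C^{\lfloor m/2\rfloor,m}$; passing to the limit in the representations shows $\mathbf v$ solves (\ref{Navleray0}) classically, and uniqueness of this solution branch is the standard energy argument for the Navier--Stokes equation.

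\textbf{Main obstacle.} The delicate step is the a priori bound in Step 2: establishing that, \emph{uniformly over the infinite $\sigma$-horizon of one step, with constants independent of the step index} and --- in the refined version --- with summable increments, the nonlinear Burgers and Leray increments stay dominated by the $\rho$-free damping term. This requires the antisymmetry cancellation (\ref{ulmrep*333int}) to be quantitatively strong enough against the singularities of $G_{,i}$ and of the Leray kernel, the bootstrap reconciling the large initial datum $\lambda^{-1}v^\rho_i(t_0,\cdot)$ with the damping it generates to actually close, and the matching $\rho\sim\Delta'$ to make the per-step increment offset precisely by the viscosity factor read off from (\ref{fourier}); arranging all three with step-independent constants, so that the bound persists through the infinitely many steps covering $[0,\infty)$, is where the real work lies.
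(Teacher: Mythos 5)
Your proposal follows essentially the same route as the paper: the time-dilatation weight producing a $\rho$-free potential damping term, the Duhamel representations with first-order Gaussian derivatives, the antisymmetry cancellation (\ref{ulmrep*333int}) over $B_{\rho^r}(x)$, the Plancherel-based viscosity damping of order $(\Delta')^2$, and the gluing/contradiction argument transferring back to original time via $\rho\succsim T^{-s}$. The only noteworthy deviation is that you propose to contract directly for $u^{\rho,t_0}_i$ from its own representation, whereas the paper first proves Theorem~\ref{loccontr} for the untransformed local scheme $v^{\rho,l,m}_i$ and then inherits regularity of the $u^{\rho,l,m}_i$ through the relation (\ref{uvrel2}) — the paper remarks the direct route works a fortiori because of the damping term, so both are valid; and your remark that "the damping is correspondingly strong" should really be that the nonlinear terms carry the extra small prefactor $\rho\lambda$ (chosen $\rho\lesssim\lambda$), which is what actually closes the bootstrap against the quadratic growth of large data $\lambda^{-1}v^\rho_i(t_0,\cdot)$.
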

\begin{rem}\label{initialrem}
As we have 
\begin{equation}
h_i\in H^m\cap C^m
\end{equation}
for $1\leq i\leq n$ we know that $D^{\alpha}_xh_i\in C\cap L^2$ for $0\leq |\alpha|\leq m$. 
For each order of derivative $|\alpha|$ we have $\left( D^{\alpha}_xg_i\right) (.)=(D^{\alpha}h_i)(\tan(.))\in C_b\left(\left(-\frac{\pi}{2},\frac{\pi}{2} \right) \right) \cap L^2\left(\left(-\frac{\pi}{2},\frac{\pi}{2} \right)\right) $ 
where $\tan(y)=\left(\tan (y_1),\cdots ,\tan (y_n) \right)^T$ where $C_b$ denotes the 
function space of bounded continuous functions. As $D^{\alpha}h_i(.)=D^{\alpha}_ig_i(\arctan(.))$, where the latter function is a concatenation of bounded continuous functions, we have indeed $D^{\alpha}_xh\in C_b\cap L^2$, and, hence, do not only have $C^{\lfloor\frac{m}{2}\rfloor, m}=C^{\lfloor\frac{m}{2}\rfloor, m}\left(\left(0,\infty\right),{\mathbb R}^n\right) $, but also $C^{\lfloor\frac{m}{2}\rfloor, m}\left(\left[0,\infty\right),{\mathbb R}^n\right)$. In general we gain regularity in a scheme, such that after one time step we do not have to care about these niceties.
\end{rem}

\begin{rem}
For the incompressible Navier Stokes equation a global regular solution branch $v\in C^0\left([0,T],H^m\cap C^m\right)$ for arbitrary $T>0$ leads - via Cornwall's inequality-
to uniqueness, i.e., if $\tilde{v}_i,~1\leq i\leq n$ is another solution of the incompressible Navier Stokes equation, then we have for $n$
\begin{equation}
{\big |}\tilde{v}(t)-v(t){\big |}^2_{L^2}\leq {\big |}\tilde{v}(0)-v(0){\big |}^2_{L^2}
\exp\left(C\int_0^t\left( {\big |}{\big |}v(s){\big |}^p_{L^4}+{\big |}{\big |}v(s){\big |}^2_{L^4}\right)ds  \right) 
\end{equation}
where $C>0$ is a constant which depends on the dimension $n$ and the viscosity only and for some $p\geq 4$ which depends on the dimension $p=8$ in dimension $n=3$ is sufficient. This means that we find no other solution branch with the same $H^q\cap C^q$ data for $q\geq 2$.    
\end{rem}

We also prove
\begin{cor}\label{maincor}
In the situation of theorem \ref{mainthm} the solution $v_i,~1\leq i\leq n$ satisfies
\begin{equation}
\forall x\in {\mathbb R}^n~\lim_{t\uparrow \infty}{\big |}v_i(t,x){\big |}=0.
\end{equation}
\end{cor}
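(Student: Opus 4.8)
The plan is to run the scheme of Theorem~\ref{mainthm} one more time and extract a per-step contraction of $\sup_i|v_i(t,\cdot)|_{L^\infty}$. Write the real-time grid as $t_k=k\rho$ and recall the step relations built into (\ref{locuuu}): on the $k$-th step the comparison functions satisfy $u^{\rho,t_k}_i(0,\cdot)=\tfrac1\lambda v_i(t_k,\cdot)$ at $\sigma=0$ by (\ref{locuuu*}), while at the end of the step ($\tau\uparrow t_0+1$, i.e.\ $\sigma\to\infty$) the rescaling factor in (\ref{locuuu}) equals $2\lambda$, so $v_i(t_{k+1},\cdot)=2\lambda\lim_{\sigma\to\infty}u^{\rho,t_k}_i(\sigma,\cdot)$ and, within the step, $|v_i(t,\cdot)|\le 2\lambda\sup_{\sigma\ge0}|u^{\rho,t_k}_i(\sigma,\cdot)|$. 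It therefore suffices to produce a constant $c>0$, independent of $k$, with
\begin{equation}
\sup_i|v_i(t_{k+1},\cdot)|_{L^\infty}\le(1-c\rho)\,\sup_i|v_i(t_k,\cdot)|_{L^\infty};
\end{equation}
iterating gives $\sup_i|v_i(t_k,\cdot)|_{L^\infty}\le(1-c\rho)^k\sup_i|v_i(0,\cdot)|_{L^\infty}\to0$, and the intra-step bound upgrades this to $\sup_{t\ge T}\sup_i|v_i(t,\cdot)|_{L^\infty}\to0$ as $T\to\infty$, which in particular yields $\lim_{t\uparrow\infty}|v_i(t,x)|=0$ for every $x$.

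The contraction is read off from the representations (\ref{Navlerayusubschemepre2i}) and (\ref{Navlerayusubschemeprei}) and rests on one elementary identity. With $w=\tau-t_0$ and $\sigma=w/\sqrt{1-w^2}$ one has $ds=(1-w^2)^{-3/2}\,dw$ and $\mu(s)=(1-w^2)^{3/2}/(1+w)$, whence
\begin{equation}
\int_0^\infty\mu(s)\,ds=\int_0^1\frac{dw}{1+w}=\ln 2 .
\end{equation}
So the auto-control damping term (\ref{autodamping}), which carries no factor $\rho$, contributes over a full step exactly the multiplicative factor $e^{-\ln 2}=\tfrac12$ in the Duhamel sense, precisely cancelling the rescaling factor $2$. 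The remaining, genuinely decreasing, contributions are: (i) the heat kernel $G$ in the representations, whose effective elapsed time over a step is $\int_0^\infty\rho\nu\mu^{\tau,1}(s)\,ds=\rho\nu$, giving the Fourier damping $\exp(-\rho\nu|\xi|^2)$ of (\ref{fourier}) with rate bounded below by $\lambda_0>0$ (since $\nu\ge\lambda_0$ and $\mu^{\tau,1}$ is bounded); and (ii) the Burgers and Leray increments, which carry the factor $\rho$ and, by the antisymmetrization (\ref{ulmrep*333int}) together with the locally integrable estimate (\ref{gcommai}) and the exponential smallness of $G_{,i}$ outside $B_R(x)$, are of higher order in $\rho$ relative to (\ref{autodamping}); being moreover quadratic in $\mathbf v$ they are bounded by $C\rho^{1+\epsilon}K^2$, where $K:=\sup_{t\ge0}\sup_i|v_i(t,\cdot)|_{H^m\cap C^m}<\infty$ by Theorem~\ref{mainthm}. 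Since $\nu,\lambda_0,K$, the functions $\mu,\mu^{\tau,1},\mu^{\tau,2}$ and the kernel constants are all fixed, no constant degrades with $k$, and for $\rho$ small relative to $1/K$ the sum of (i) and (ii) is negative, which gives the claimed $1-c\rho$.

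The main obstacle is the low-frequency regime, where the viscous factor $\exp(-\rho\nu|\xi|^2)\to 1$ and item~(i) no longer dominates item~(ii) mode by mode; this is exactly where the hypothesis ``for regular data'' is used. Two ingredients close the gap. First, the nonlinear increments in (\ref{Navlerayusubschemepre2i})--(\ref{Navlerayusubschemeprei}) are convolved against the first spatial derivative $G_{,i}$, hence carry a factor $|\xi|$ in Fourier, and the antisymmetrization (\ref{ulmrep*333int}) replaces the integrand by a first difference over a ball of radius $\rho^r$, producing a genuine extra power of $\rho$ (the $s\in(r,1)$ bookkeeping in the excerpt); so even at low frequencies the nonlinear contribution is subordinate and does not obstruct pointwise decay of $v_i$ (it would at worst obstruct decay of low-frequency $L^2$-mass). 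Second, as a clean safety net one may combine the uniform bound $\sup_t\sup_i|v_i(t)|_{H^m\cap C^m}\le K$ with the classical energy identity $\tfrac{d}{dt}\|\mathbf v(t)\|_{L^2}^2=-2\nu\|\nabla\mathbf v(t)\|_{L^2}^2$ (the Leray term dropping since $\div\mathbf v=0$): this forces $\int_0^\infty\|\nabla\mathbf v(s)\|_{L^2}^2\,ds<\infty$ and, since $\tfrac{d}{dt}\|\nabla\mathbf v\|_{L^2}^2$ is bounded by the uniform $H^m$-bound, $\|\nabla\mathbf v(t)\|_{L^2}\to0$; then for $m\ge n/2$ the interpolation $\|v_i(t)\|_{L^\infty}\le C\|\nabla v_i(t)\|_{L^2}^{1-\theta}\|D^m v_i(t)\|_{L^2}^{\theta}$ with $\theta=\tfrac{n-2}{2(m-1)}$, together with $\|D^m v_i(t)\|_{L^2}\le K$, gives $\|v_i(t)\|_{L^\infty}\to0$, and for the remaining small-$m$ cases one additionally runs a Fourier-splitting (Schonbek-type) argument for $\|\mathbf v(t)\|_{L^2}\to0$ and interpolates using the $C^m$-bound on $D^2 v_i$. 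I expect the write-up to follow the scheme-based route, with the low-frequency bookkeeping in (\ref{ulmrep*333int}) being the delicate step to make fully rigorous.
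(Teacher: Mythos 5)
Your most useful observation is the computation $\int_0^\infty \mu(s)\,ds=\int_0^1\frac{dw}{1+w}=\ln 2$: it shows that over a full time step the auto-control potential term, viewed as a multiplicative factor, contributes exactly $e^{-\ln 2}=\tfrac12$ and therefore \emph{cancels} the rescaling factor $2\lambda/\lambda$, rather than producing a strict decrease. This is correct (and one reaches the same conclusion with the localized factor $(1+(\tau-t_0))$ on $[0,\delta_0]$: $\int_0^{\delta_0}\mu\,ds=\ln(1+\delta)$ cancels $1+\delta$). It means the scheme-based route, taken literally, yields preservation and not decay, and any decay must be traced to viscosity and the subordination of the nonlinear increments. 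There your main route has a genuine gap that you flag but do not close: the Fourier factor $\exp(-\rho\nu\mu'|\xi|^2\Delta')$ tends to $1$ as $|\xi|\to 0$, and the antisymmetrization in (\ref{ulmrep*333int}) only shows the Burgers and Leray increments are $o(\rho)$ — it prevents growth but does not supply a negative contribution. Thus the advertised per-step estimate $\sup_i|v_i(t_{k+1},\cdot)|_{L^\infty}\le(1-c\rho)\sup_i|v_i(t_k,\cdot)|_{L^\infty}$ is not established by the arguments given; low-frequency content is, at best, preserved by the scheme. The remark that this ``only obstructs decay of low-frequency $L^2$-mass, not pointwise decay'' is heuristically plausible (mass can spread out) but is not a proof within the scheme.

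Your ``safety net'' is therefore the actual proof, and it is a genuinely different and more elementary route than what the paper sketches. From the global bound $\sup_{t\ge 0}|v_i(t,\cdot)|_{H^m\cap C^m}\le K$ (which Theorem~\ref{mainthm} and the item (iii) construction supply) one gets: the energy identity $\frac{d}{dt}\|\mathbf v\|_{L^2}^2=-2\nu\|\nabla\mathbf v\|_{L^2}^2$ gives $\int_0^\infty\|\nabla\mathbf v(s)\|_{L^2}^2\,ds<\infty$; testing the equation with $-\Delta v_i$ and using the uniform $H^2\cap C^2$-bound gives $\bigl|\frac{d}{dt}\|\nabla\mathbf v\|_{L^2}^2\bigr|$ bounded, hence $\|\nabla\mathbf v(t)\|_{L^2}\to 0$; finally, the frequency-splitting interpolation $\|v_i\|_{L^\infty}\le C\|\nabla v_i\|_{L^2}^{1-\theta}\|D^m v_i\|_{L^2}^\theta$ with $\theta=\tfrac{n-2}{2(m-1)}$ (valid for $n\ge 3$, $m>n/2$) gives $\|v_i(t)\|_{L^\infty}\to 0$, which is stronger than the pointwise statement of the corollary. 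For $n=3$ this already covers the paper's minimal hypothesis $m\ge 2$; for general $n$ one needs $m\ge n/2$, and the Schonbek-type remark for the remaining cases is not worked out but is standard. By contrast, the paper's own treatment of the corollary is a one-sentence remark (``the considerations above lead to decreasing maximum\ldots and to decay to zero'') relying on viscosity damping plus potential damping, which — as your $\ln 2$ computation makes explicit — leaves the low-frequency balance implicit. Your energy/interpolation argument is cleaner and closes the gap; the scheme-based half of your proposal would need an additional mechanism (e.g.\ a quantified low-frequency estimate) before it could stand on its own.
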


The huge amount of research concerning the regularity problem makes it difficult to give an overview and a fair discussion concerning all the contributions made. It seems to me that in the time after the paper of Hopf the contribution of the Cafferelli-Kohn-Nirenberg paper and the related papers by Lin are the most significant, because it seems that their arguments can be supplemented, such that their results are indeed very close to a full global regular existence result. In the next section we make some preliminary remarks, which may help to understand what spatial properties of the operators are needed in order to obtain global solution branches via auto-controlled scheme. In this context we discuss the Katz-Pavlovic model. In section 3 we make additional comments on very recent research concerning the construction of singular solutions in averaged Navier Stokes equations and a claimed global existence result for a more general system.  
\section{Some preliminary remarks}

The auto-controlled schemes considered here introduce a damping term for a comparative function where the damping term is caused by a transformation which depends on time exclusively. The following questions then immediately arise
\begin{itemize}
 \item[i)] why does the method not apply to a simple ODE such as $\stackrel{\cdot}{x}=x^2,~x(0)=x_0\neq 0$? Here, it is interesting to compare the spatial properties needed with the spatial properties of the Katz-Pavlovic model.
 \item[ii)] related to item i): what spatial properties of the operator are used and how? 
 \item[iii)] the method is fairly general, and seems to apply to many PDEs which are known to have singular solutions. How is it possible to claim that they have global solutions?
\end{itemize}

\begin{rem}
In any case it has to be shown that the step size $\rho >0$ does not depend on the time horizon essentially in the sense that global existence results for the function $v^{\rho}_i,~1\leq i\leq n$ with $v^{\rho}_i(\tau,.)=v_i(t,.)$ for $t=\rho \tau$ can be transferred to analogous statements for $v_i,~1\leq i\leq n$.
\end{rem}

Ad i) we remark that the method uses spatial effects of the operator. If the (modulus of) values of a solution function of a differential equation with quadratic terms of the value function itself can become large then the increment of this square function in the operator exceeds any linear control such that the growth cannot be offset (especially not by a linear autocontrol).

The solution of the ODE mentioned in i) has the increment
\begin{equation}\label{ODE}
x(t)-x(t_0)=\int_{t_0}^t x^2(s)ds
\end{equation}
for $t_0\geq 0$. This is a quadratic incremental growth and a damping term caused by time transformation cannot offset this growth. Consider an auto-controlled scheme for this equation. Defining for $t_0\geq 0$
\begin{equation}
(1+t)y(s)=x(t),~s=\frac{t-t_0}{\sqrt{1-(t-t_0)^2}},~t\in [t_0,t_0+1)
\end{equation}
we have
\begin{equation}
y(s)+(1+t)\stackrel{\cdot}{y}(s)\frac{ds}{dt}=(1+t)^2y(s)^2,
\end{equation}
which leads to
\begin{equation}\label{ydot}
\stackrel{\cdot}{y}(s)=\sqrt{1-(t-t_0)^2}^3(1+t)y(s)^2-\frac{\sqrt{1-(t-t_0)^2}^3}{(1+t)}y(s).
\end{equation}
In this case we know that
\begin{equation}
y(s)=\frac{1}{(1+t(s))(1-t(s))},
\end{equation}
where $t=t(s)$ denotes the inverse of $s= s(t)$ such that
\begin{equation}
\stackrel{\cdot}{y}(s)=\frac{\sqrt{1-(t-t_0)^2}^3}{1+t(s)}\left( \frac{1}{(1-t(s))^2}-\frac{1}{(1+t(s))(1-t(s))}\right),
\end{equation}
which is even for $t_0=0$ a growth of order $\sqrt{1-t^2}^{-1}$ as $t=t(s)\uparrow 1$ or $s\uparrow \infty$. The equation in (\ref{ydot}) shows that in auto-controlled schemes we need essentially a linear upper bound for the local growth of the original operator in order to obtain a global upper bound via an auto-controlled scheme.  
We can also observe the effect of scaling here. For the equation
\begin{equation}
\stackrel{\cdot}{x}_{\lambda}=\lambda x^2_{\lambda},~x_{\lambda}(0)=x_0,
\end{equation}
where (for $x_0=1$ for simplicity) we have the solution $x_{\lambda}(t)=\frac{x_0}{1-\lambda x_0t}$, which pushes the singularity to $t=\frac{1}{\lambda}$.
Defining for $t_0\geq 0$
\begin{equation}
\lambda (1+t)y_{\lambda}(s)=x_{\lambda}(t),~s=\frac{t-t_0}{\sqrt{1-(t-t_0)^2}},~t\in [t_0,t_0+1)
\end{equation}
we have
\begin{equation}
\lambda y_{\lambda}(s)+\lambda (1+t)\stackrel{\cdot}{y}_{\lambda}(s)\frac{ds}{dt}=\lambda^2(1+t)^2y(s)^2_{\lambda},
\end{equation}
which leads to
\begin{equation}\label{ydot}
\stackrel{\cdot}{y}_{\lambda}(s)=\lambda\sqrt{1-(t-t_0)^2}^3(1+t)y(s)^2_{\lambda}-\frac{\sqrt{1-(t-t_0)^2}^3}{(1+t)}y_{\lambda}(s).
\end{equation}
In this case we know that
\begin{equation}
y_{\lambda}(s)=\frac{1}{\lambda(1+t(s))(1-\lambda t(s))},
\end{equation}
such that
\begin{equation}
\stackrel{\cdot}{y}_{\lambda}(s)=\frac{\sqrt{1-(t-t_0)^2}^3}{1+t(s)}\left( \frac{1}{(1-\lambda t(s))^2}-\frac{1}{\lambda (1+t(s))(1-\lambda t(s))}\right),
\end{equation}
such that the growth of the transformed solution cannot be offset as $t_0$ becomes close to $\frac{1}{\lambda}$. Again this shows that we need some spatial effects of the operator in order to show that there are global solution branches. Note that auto-controlled schemes are designed in order to prove the existence of a global solution in regular function spaces. Uniqueness is a different matter. If an equation has multiple solutions, then an auto-controlled scheme may be also be used in order to obtain singular solutions. A spatial effect of operators which leads to global auto-controlled schemes is the existence of a first order spatial derivative in the nonlinear terms. For a class of such examples representations of local solutions involve the first order derivative of the Gaussian, and this leads to a comparatively small growth of the nonlinear terms. Even if the value function becomes large the growth of the nonlinear terms can be relatively small compared to the linear terms related to viscosity damping and potential damping. This is the main difference to operators which involve nonlinear terms with quadratic or hogher order powers of the value function itself. 

Such an example of an operator which involves first order spatial derivatives in the nonlinear terms is the inviscid n-dimensional Burgers problem $\frac{\partial u}{\partial t}-u\nabla u=0,~u(0)=u_0$ with $u=(u_1,\cdots ,u_n)^T$ on the torus. In terms of modes of the analytical basis $\left\lbrace \exp( k x)\right\rbrace_{k\in {\mathbb Z}^n}$, where ${\mathbb Z}^n$ is the set of $n$-tuples of integers and $n\geq 1$,
this corresponds to an infinite ODE for time dependent modes $u_{il}=u_{il}(t),~l\in {\mathbb Z}^n,~1\leq i\leq n$ of the form
\begin{equation}\label{burgmod}
\frac{d}{dt}u_{il}=\sum_{j=1}^n\sum_{k\in {\mathbb Z}^n}u_{j(l-k)}k_ju_{ik}=\sum_{j=1}^n\sum_{k\in {\mathbb Z}^n\setminus \{0\}}u_{j(l-k)}k_ju_{ik},
\end{equation}
where $k_j$ is the $j$th component of the $n$-tuple $k=(k_1,\cdots ,k_n)$.
If the data $u_0$ are regular, i.e., if the modes $u_{ik}(0),~k\in {\mathbb Z}^n,~1\leq i\leq n$ of $u_0$ have polynomial decay
\begin{equation}
{\big |}u_{ik}(0){\big |}\leq \frac{c}{1+|k|^m}~\mbox{for}~m\geq n+2,
\end{equation}
for some constant $c>0$, then the fact that
\begin{equation}\label{boundint}
\sum_{k\in {\mathbb Z}^n}{\big |}u_{j(l-k)}(0)k_ju_{ik}(0){\big |}\leq \frac{c_0}{1+|l|^{m+m-n-1}}\leq \frac{c_0}{1+|l|^{n+2}}
\end{equation}
for some $c_0>0$ leads to local contraction results and to the observation that the increment
\begin{equation}
u_{il}(t)-u_{il}(t_0)=\int_{t_0}^t\sum_{k\in {\mathbb Z}^n}u_{l-k}(s)ku_k(s)ds
\end{equation}
has a growth which becomes small on a small time scale $\rho$ in fundamental difference to (\ref{ODE}).  The spatial derivative in the nonlinear Burgers term
\begin{equation}
\sum_{j=1}^n\sum_{k\in {\mathbb Z}^n}k_ju_{j(l-k)}u_{ik}
\end{equation}
appears as the factor $k_j$ and ensures that in an iteration scheme contribution the zero order modes $u_{i0}$ are by modes of order different from zero only. Moreover, the relation in (\ref{boundint}) holds with a stronger upper bound of increasing order of polynomial decay. Let us have a closer look at this. 
First of all the effects described can be combined with a scaling effect in order to obtain global solution branches. Note that for small $\lambda >0$ the function $u^{\lambda}_i=\lambda u_i$ satisfies the equation
\begin{equation}\label{burginvisc}
\frac{d}{dt}u^{\lambda}_{il}=\lambda\sum_{j=1}^n\sum_{k\in {\mathbb Z}^n\setminus \{0\}}u^{\lambda}_{j(l-k)}k_ju^{\lambda}_{ik}.
\end{equation}
Given some natural number $M_0$ we may consider all modes $l$ with $|l|\leq M_0$. Then  for small $\lambda >0$ the sum
\begin{equation} 
\lambda\sum_{j=1}^n\sum_{|k|\leq M_0}u^{\lambda}_{j(l-k)}k_ju^{\lambda}_{ik}
\end{equation}
is small while an inheritance of regularity of the value function means that we have small higher order modes, i.e., the moduli
\begin{equation}
{\big |}u^{\lambda}_{ik}(_0){\big |}\lesssim\frac{1}{|k|^{2m}}
\end{equation}
are small for $|k|\geq M_0$. An auto-controlled scheme for the inviscid Burgers equation with comparison function
\begin{equation}
\lambda (1+t)w^{\lambda}_i(s,.)=u_i(t,.),~s=\frac{t-t_0}{\sqrt{1-(t-t_0)^2}}
\end{equation}
leads to 
\begin{equation}\label{burgmod}
\frac{d}{dt}w^{\lambda}_{il}=
\lambda \sqrt{1-(t-t_0)^2}^3(1+t)
\sum_{j=1}^n\sum_{k\in {\mathbb Z}^n\setminus \{0\}}w^{\lambda}_{j(l-k)}k_jw^{\lambda}_{ik}-\frac{\sqrt{1-(t-t_0)^2}^3}{1+t}w^{\lambda}_{il},
\end{equation}
Given a finite time horizon $T>0$ for original time $0\leq t\leq T$ a small $\lambda >0$, which depend only on the time horizon $T>0$ and the initial data $u_0i(0)\in h^s\left({\mathbb Z}^n\right), 1\leq i\leq n, s\geq n+2$ for a strong dual Sobolev space $h^s$ such that the damping term 
\begin{equation}
-\frac{\sqrt{1-(t-t_0)^2}^3}{1+t}w^{\lambda}_{il}
\end{equation}
can offset the growth of the nonlinear term, because we have a relation of the form \ref{boundint} and the decay is inherited by a time local scheme which is easily shown by contraction. 
For this reason the data have to be in regular spaces in order to have bounds as in (\ref{boundint}) and then it may be possible to get a global solution branch for the equation. This effect is even stronger if there is a viscous term. The equation corresponding to (\ref{burginvisc}) is
\begin{equation}\label{burgvisc}
\frac{d}{dt}v^{\lambda}_{il}=\nu \Delta v^{\lambda}_{il}+\lambda\sum_{j=1}^n\sum_{k\in {\mathbb Z}^n\setminus \{0\}}v^{\lambda}_{j(l-k)}k_jv^{\lambda}_{ik}.
\end{equation}
Here we have an additional damping of the higher order modes via the viscous damping as can be observed from Trotter product formulas (cf. \cite{KTr1, KTr2}).
Compare this with the Katz Pavlovic model investigated in \cite{KP} which is essentially described by the equation
\begin{equation}
 \frac{d}{dt}K_m-\mu^{2m\alpha}K_m+\mu^{m-1}K^2_{m-1}-\mu^mK_mK_{m-1},
\end{equation}
where $m\in {\mathbb Z}$ and $\mu$ is some positive constant. Here the existence of singular solutions has been shown for $\lambda >1$. Here, we observe that we do not have the property that we can have a small growth for the modes less than a certain $M_0$ (especially the growth at  $m=0$ the non-linear term is not zero in general). Here an auto-control scheme cannot offset the growth of the nonlinear term in general.

Ad iii) It is clear that singular solutions of the form $\frac{w(t,x)}{1-t}$ with $w(1,x)\neq 0$ can be constructed for the inviscid Burgers equation. A global solution construction via an auto-controlled scheme (if possible) then means that there is a global solution branch of the equation and that solutions are not unique in general. For some equations (such as for the incompressible Navier Stokes equation) there are uniqueness arguments in addition which lead to the conclusion that a global solution branch is the unique solution of the equation in an appropriate (strong) function space. For example uniqueness of solutions for a class of heatflow maps was proved by Lin in weak spaces while the existence of singular solutions is known for heat flow maps on manifolds of dimension four. In a strong space it may be possible to construct a global solution branch which coexists with the constructed singular solution.   

\section{Some more comments on recent research}

In order to apply the auto-controlled to extended models with additional source terms in form of external force functions $f_i,~1\leq i\leq n$ directly, the latter should be located in $C^1\cap H^1$ (in order to obtain a global scheme). A source (external force) term which is just in $L^2$ seems critical - at least in the inviscid limit we argue that there is a blow up. This argument also indicates that a weak external source term may be critical. In this context it is interesting  that there has been an announced regularity result for the Navier Stokes equation on the $n$-dimensional $3$-dimensional torus recently, which states that the
\begin{equation}\label{Navlerayolt} \begin{array}{ll}
\frac{\partial v_i}{\partial t}-\nu\sum_{j=1}^n \frac{\partial^2 v_i}{\partial x_j^2} 
+\sum_{j=1}^n v_j\frac{\partial v_i}{\partial x_j}=-\nabla p+f_i.
\end{array}
\end{equation}
with $f_i\in L^2$, constant $\nu>0$ and initial data $h_i\equiv 0$ has a global regular solution. The idea of the related work is to morph the velocity field by an additional vector field where the sum preserves some norm. It was criticised that the
homotop family of operators $A^{\theta}$ considered do not have a stability property in weak spaces as claimed. We mention here that singular solutions are quite generic in the inviscid limit. We shall consider in more detail elsewhere whether there are also singular solutions for some positive viscosity $\nu >0$. In this case the proposed global existence theorem could not be valid in a function space of the velocity functions in which solutions are known to be unique. We sketch an argument for the existence of singular solutions in the inviscid situation which we may consider in more detail elsewhere.  The following argument is in the framework of calculus with explicit infinitesimals as described in \cite{KTr1}. There we mentioned that
for that the modes $v_{i\alpha},~1\leq i\leq n, \alpha\in {\mathbb Z}^n$ of the velocity component functions for an incompressible Navier Stokes equation (without external force terms) satisfy the infinitesimal Euler scheme 
\begin{equation}\label{navode200second}
\begin{array}{ll}
v_{i\alpha}((m+1)\delta t)=v_{i\alpha}(m\delta t)+\sum_{j=1}^n\nu \left( -\frac{4\pi^2 \alpha_j^2}{l^2}\right)v_{i\alpha}(m\delta t)\delta t\\
\\
+\sum_{j=1}^n\sum_{\gamma\in {\mathbb Z}^n}e_{ij\alpha\gamma}(m\delta t)v_{j\gamma}(m\delta t)\delta t.
\end{array} 
\end{equation}
Abbreviating 
\begin{equation}
\begin{array}{ll}
e_{ij\alpha\gamma}(m dt)=-\frac{2\pi i (\alpha_j-\gamma_j)}{l}v_{i(\alpha-\gamma)}(m\delta t)\\
\\
+ 2\pi i\alpha_i1_{\left\lbrace \alpha\neq 0\right\rbrace}
4\pi^2 \frac{\sum_{k=1}^n\gamma_j(\alpha_k-\gamma_k)v_{k(\alpha-\gamma)}(m\delta t)}{\sum_{i=1}^n4\pi^2\alpha_i^2}.
\end{array}
\end{equation}
this leads to the Trotter product formula (for all $t_e=N_0\delta t$ for some finite or infinite number $N_0$)
\begin{equation}\label{aa}
\mathbf{v}^{F}(t_e)\doteq \Pi_{m=0}^{N_0-1}\left( \delta_{ij\alpha\beta}\exp\left(-\nu 4\pi^2 \sum_{i=1}^n\alpha_i^2 \delta t \right)\right)  \left( \exp\left( \left( \left( e_{ij\alpha\beta}\right)_{ij\alpha\beta}(m\delta t)\right)\delta t \right) \right) \mathbf{h}^F, 
\end{equation}
and where $\doteq$ means that the identity holds up to an infinitesimal error. Here, recall that the entries in $(\delta_{ij\alpha\beta})$ are Kronecker-$\delta$s which describe the unit $n{\mathbb Z}^n\times n{\mathbb Z}^n$-matrix. At each time step $m$ we have 
\begin{equation}
\left( \delta_{ij\alpha\beta}\exp\left(-\nu 4\pi^2 \sum_{i=1}^n\alpha_i^2 \delta t \right)\right)  \left( \exp\left( \left( \left( e_{ij\alpha\beta}(m\delta t)\right)_{ij\alpha\beta}\right)\delta t\right)  \right)\mathbf{v}^F(m\delta t).
\end{equation}
which is a correct local formula up to an error of order $\delta t^2$.
We argued in \cite{KTr1} that upper bounds can be obtained if the initial data are located in rather strong Sobolev spaces. For weaker norms the deacy of the modes cannot be controlled in general. First let is recall the idea of a construction of global regular upper bounds for data in strong norms based on the Trotter product formula.
We may consider the scheme on the time scale $\delta t=\rho\delta s$ for finite but small $\rho>0$, for $v^{\rho}_i(s,.)=v_i(t,.)$ with $t=\rho s$))
\begin{equation}\label{navode200second*}
\begin{array}{ll}
v^{\rho}_{i\alpha}((m+1)\delta s)=v^{\rho}_{i\alpha}(m\delta s)+\rho\sum_{j=1}^n\nu \left( -\frac{4\pi^2 \alpha_j^2}{l^2}\right)v^{\rho}_{i\alpha}(m\delta s)\delta s\\
\\
+\rho\sum_{j=1}^n\sum_{\gamma\in {\mathbb Z}^n}e^{\rho}_{ij\alpha\gamma}(m\delta t)v^{\rho}_{j\gamma}(m\delta s)\delta s.
\end{array} 
\end{equation}
Abbreviating 
\begin{equation}
\begin{array}{ll}
e^{\rho}_{ij\alpha\gamma}(m \delta s)=-\rho\frac{2\pi i (\alpha_j-\gamma_j)}{l}v_{i(\alpha-\gamma)}(m\delta s)\\
\\
+ \rho 2\pi i\alpha_i1_{\left\lbrace \alpha\neq 0\right\rbrace}
4\pi^2 \frac{\sum_{k=1}^n\gamma_j(\alpha_k-\gamma_k)v_{k(\alpha-\gamma)}(m\delta s)}{\sum_{i=1}^n4\pi^2\alpha_i^2}.
\end{array}
\end{equation}
this leads to the the Trotter product formula 
\begin{equation}\label{aa}
\mathbf{v}^{\rho,F}(t_e)\doteq \Pi_{m=0}^{N_0-1}\left( \delta_{ij\alpha\beta}\exp\left(-\rho\nu 4\pi^2 \sum_{i=1}^n\alpha_i^2 \delta s \right)\right)  \left( \exp\left( \left( \left( e^{\rho}_{ij\alpha\beta}\right)_{ij\alpha\beta}(m\delta t)\right)\delta s \right) \right) \mathbf{h}^F, 
\end{equation}
where at each time step $m$ we have 
\begin{equation}\label{eulerstep}
\left( \delta_{ij\alpha\beta}\exp\left(-\rho\nu 4\pi^2 \sum_{i=1}^n\alpha_i^2 \delta s \right)\right)  \left( \exp\left( \left( \left( e^{\rho}_{ij\alpha\beta}(m\delta s)\right)_{ij\alpha\beta}\right)\delta t\right)  \right)\mathbf{v}^{\rho,F}(m\delta s).
\end{equation}
Now the idea of a regular upper bound argument is simply this. Assume inductively that the data $\mathbf{v}^{\rho,F}(m\delta s)$ at step $m$ satisfy
\begin{equation}\label{uppernote}
{\big |}v^{\rho}_{i\alpha}(m\delta s){\big |}\leq \frac{C}{1+|\alpha|^{n+2}}
\end{equation}
This data go into the Euler term of the euler step in (\ref{eulerstep}), and, according to the rule
\begin{equation}\label{rule}
\sum_{\gamma\in {\mathbb Z}^n}\frac{C|\alpha-\gamma|}{1+|\alpha-\gamma|^{n+2}} \frac{C}{1+|\gamma|^{m+2}}\leq  \frac{cC}{1+|\alpha|^{n+2+n+2-(n+1)}}=\frac{cC}{1+|\alpha|^{n+3}}
\end{equation}
for the Euler terms, i.e. the burgers term and the Leray projection term, we get decreasing order of modes in (\ref{rule}) such that for small $\rho$ the growth behavior after finitely many step $m+p$ can be estimated up to first order in $\delta s$ by 
\begin{equation}\label{estrho}
{\big |}v^{\rho}_{i\alpha}((m+p)\delta s){\big |}\leq {\big |}v^{\rho}_{i\alpha}(m\delta s){\big |}+\frac{\rho \tilde{c}C}{1+|\alpha|^{n+2}}\delta s-p\rho \nu 4\pi^2 \sum_{i=1}^n\alpha_i^2 {\big |}v^{\rho}_{i\alpha}(m\delta s){\big |}\delta s,
\end{equation}
where we use that for small $\rho$ the first term on the right side of (\ref{estrho}) can be estimated by a (derivative of) a geometric series (small $\rho$ ensures that we have a converging series for all modes $|\alpha|\neq 0$). Note the last term in (\ref{estrho}): we have replaced $\rho \nu 4\pi^2 \sum_{i=1}^n\alpha_i^2\sum_{l=1}^p{\big |}v^{\rho}_{i\alpha}((m+l)\delta s){\big |}\delta s$ by the first-order-equal term $p\rho \nu 4\pi^2 \sum_{i=1}^n\alpha_i^2{\big |}v^{\rho}_{i\alpha}(m\delta s){\big |}$ as $p$ is a finite number. This shows that the viscosity damping is microscopically (i.e. for example in a monad) similar to a damping in an Ornstein-Uhlenbeck process: if the value of a mode becomes too large then it is forced back by the viscosity damping. Note furtermore that the formula in (\ref{estrho}) holds for all finite numbers $p$ while $p$ does not appear in the first term on the right side of (\ref{estrho}) which is related to the nonlinear terms. The reason is that we could sum them up for small $\rho$ in a (first order derivative) of a geometric series. The reason is that elliptic intergals related to the rule (\ref{rule}) lead to strong polynomial decay behavior for data in strong morms. Now if the data become large enough, let's say
\begin{equation}
{\big |}v^{\rho}_{i\alpha}(m\delta s){\big |}\in \left[ \frac{0.5 C}{1+|\alpha|^{n+2}},\frac{C}{1+|\alpha|^{n+2}}\right] 
\end{equation}
Then the viscosity damping will offset the growth of the nonlinear terms for this modes after finitely may time steps (for a small $\rho>0$ related to $C>0$). On the other hand, if
\begin{equation}
{\big |}v^{\rho}_{i\alpha}(m\delta s){\big |}\in \left[ 0,\frac{0.5 C}{1+|\alpha|^{n+2}}\right],
\end{equation}
then an upper bound of the form (\ref{uppernote}) will be preserved for some finite  time, i.e. in a time interval $[t_0,t_0+\Delta]$ with finite positive real $\Delta >0$ if we started at time $t_0$. In the former case such a positive real $\Delta >0$ also exists by transfinite induction or by an overflow principle. This argument cannot be applied if we have weaker data or a time-dependent force term which is just in $L^2$. Then solution may blow up. Let us consider this in more detail.

Note that $g\in H^s\left({\mathbb Z}^n\right)$ for $s\geq 0$ can be characterised by dual Sobolev space $h^s\left({\mathbb Z}^n\right)$ which measure the growth properties of the modes, i.e.,  
\begin{equation}
(g_{\alpha})_{\alpha\in {\mathbb Z}^n}\in h^s\left({\mathbb Z}^n\right):\Longleftrightarrow \sum_{\alpha\in {\mathbb Z}^n}~\left(1+|\alpha|^{2s} \right) {\big |}g_{\alpha}^2{\big |}\leq c.
\end{equation}
For $s=0$ we have $h^0\left({\mathbb Z}^n\right)=l^2\left({\mathbb Z}^n\right)$,the dual of $L^2$. Next we show that $f_i\in L^2$ is an assumption which is to weak in order to obtain a global regular existence theorem.  
For some time dependent bounded continuous functions $c_i\neq 0,~1\leq i\leq n$ consider the source data $h_{i},~1\leq i\leq n$ with modes 
\begin{equation}\label{fialpha1}
f_{i\alpha}(t)=\frac{c_i(t)}{1+|\alpha|^{\frac{3+\epsilon}{2}}}~\mbox{ if }~ \alpha_i\geq 0 \mbox{ for all}~1\leq i\leq n,
\end{equation}
and for small $\epsilon >0$. 
For $f_i(t,.)\in L^2$ for all $t\in [0,T]$ for some time horizon such that the main theorem cannot be true in this general form. As this work is on the Navier Stokes equation on the whole domain  we shall brief here. As additional material cf. our considerations about Trotter product formulas related to Navier Stokes equations in \cite{KTr1,KTr2}. For a Navier Stokes equation with zero initial data and a source term $f_i,¸1\leq i\leq n$ we write  the velocity component $v^f_i=v^f_i(t,x)$ for fixed $t\geq 0$ in the analytic basis $\left\lbrace \exp\left( \frac{2\pi i\alpha x}{l}\right),~\alpha \in {\mathbb Z}^n\right\rbrace $
\begin{equation}
v^f_i(t,x):=\sum_{\alpha\in {\mathbb Z}^n}v^f_{i\alpha}(t)\exp{\left( \frac{2\pi i\alpha x}{l}\right) },
\end{equation}
the problem in (\ref{Navleray}) can be rephrased in terms of velocity modes $v^f_{i\alpha},~\alpha\in {\mathbb Z}^n,~1\leq i\leq n$, where
\begin{equation}\label{navode200first}
\begin{array}{ll}
\frac{d v^f_{i\alpha}}{dt}=\sum_{j=1}^n\nu \left( -\frac{4\pi^2 \alpha_j^2}{l^2}\right)v^f_{i\alpha}
-\sum_{j=1}^n\sum_{\gamma \in {\mathbb Z}^n}\frac{2\pi i \gamma_j}{l}v^f_{j(\alpha-\gamma)}v_{i\gamma}\\
\\
+2\pi i\alpha_i1_{\left\lbrace \alpha\neq 0\right\rbrace}\frac{\sum_{j,k=1}^n\sum_{\gamma\in {\mathbb Z}^n}4\pi^2 \gamma_j(\alpha_k-\gamma_k)v^f_{j\gamma}v^f_{k(\alpha-\gamma)}}{\sum_{i=1}^n4\pi^2\alpha_i^2}+f_{i\alpha},
\end{array} 
\end{equation}
for all $1\leq i\leq n$ and where for all $\alpha\in {\mathbb Z}^n$ we have $v^f_{i\alpha}(0)=0$. We consider an infinite scheme with explicit infinitesimals (a hyperfinite scheme or a scheme in the calculus of Connes for example) of the form
\begin{equation}\label{navode200first}
\begin{array}{ll}
v^f_{i\alpha}((m+1)\delta t)=v^f_{i\alpha}(m\delta t)+\sum_{j=1}^n\nu \left( -\frac{4\pi^2 \alpha_j^2}{l^2}\right)v^f_{i\alpha}(m\delta t)\delta t\\
\\
-\sum_{j=1}^n\sum_{\gamma \in {\mathbb Z}^n}\frac{2\pi i \gamma_j}{l}v^f_{j(\alpha-\gamma)}(m\delta t)v^f_{i\gamma}(m\delta t)\delta t\\
\\
+2\pi i\alpha_i1_{\left\lbrace \alpha\neq 0\right\rbrace}\frac{\sum_{j,k=1}^n\sum_{\gamma\in {\mathbb Z}^n}4\pi^2 \gamma_j(\alpha_k-\gamma_k)v^f_{j\gamma}(m\delta t)v^f_{k(\alpha-\gamma)}(m\delta t)}{\sum_{i=1}^n4\pi^2\alpha_i^2}\delta t+f_{i\alpha}(m\delta t)\delta t.
\end{array} 
\end{equation}
We may abbreviate the nonlinear Euler term by
\begin{equation}
\begin{array}{ll}
e_{ij\alpha\gamma}(m dt)=-\frac{2\pi i (\alpha_j-\gamma_j)}{l}v^f_{i(\alpha-\gamma)}(m\delta t)\\
\\
+2\pi i\alpha_i1_{\left\lbrace \alpha\neq 0\right\rbrace}
4\pi^2 \frac{\sum_{k=1}^n\gamma_j(\alpha_k-\gamma_k)v^f_{k(\alpha-\gamma)}(m\delta t)}{\sum_{i=1}^n4\pi^2\alpha_i^2}.
\end{array}
\end{equation}
Note that with this abbreviation (\ref{navode200first}) becomes
\begin{equation}\label{navode200second}
\begin{array}{ll}
v^f_{i\alpha}((m+1)\delta t)=v^f_{i\alpha}(m\delta t)+\sum_{j=1}^n\nu \left( -\frac{4\pi^2 \alpha_j^2}{l^2}\right)v_{i\alpha}(m\delta t)\delta t\\
\\
+\sum_{j=1}^n\sum_{\gamma\in {\mathbb Z}^n}e_{ij\alpha\gamma}(m\delta t)v^f_{j\gamma}(m\delta t)\delta t+f_{i\alpha}(m\delta t)\delta t.
\end{array} 
\end{equation}
We have not defined the external force functions modes $f_{i\alpha}$ in the case where for some $1\leq j\leq n$ we have $\alpha_j<0$ yet. In order to obtain lower bounds for our scheme, we define these modes dynamically, i.e., we define
\begin{equation}\label{fialpha2}
\begin{array}{ll}
f_{i\alpha}(m\delta t)=-v^f_{i\alpha}(m\delta t)-\sum_{j=1}^n\nu \left( -\frac{4\pi^2 \alpha_j^2}{l^2}\right)v_{i\alpha}(m\delta t)\delta t\\
\\
-\sum_{j=1}^n\sum_{\gamma\in {\mathbb Z}^n}e_{ij\alpha\gamma}(m\delta t)v^f_{j\gamma}(m\delta t)\delta t,
\end{array}
\end{equation}
if $\alpha_i< 0 \mbox{ for }~1\leq i\leq n$. This means that
\begin{equation}
v^f_{i\alpha}(m\delta t)=0\Longrightarrow v^f_{i\alpha}((m+1)\delta t)=0 ~\mbox{if}~\alpha_i< 0 \mbox{ for }~1\leq i\leq n.
\end{equation}
This way it is easier to get dynamic lower bounds for the velocity component functions starting with zero initial data.
Note that $f_i\in L^ 2$ with this definition as long as the velocity function is smooth at least.

It is easy to check that the scheme above leads to real solutions, i.e.
$\forall 1\leq i\leq n~\forall m\geq 1~\forall x:~v^f_{i}(m\delta t,x)$ is real (where in a hyperfinite scheme these the velocity component functions values are taken to be standard (shadows). As for $m=0$ we have $v^f_{i}(m\delta t,x)=v^f_{i}(0,x)=0$ in the situation considered such that for the Euler scheme above we have
\begin{equation}
v^f_{i\alpha}(\delta t)=f_{i\alpha}(0)\delta t
\end{equation}
for all $\alpha \in {\mathbb Z}^n$.
In this form the damping effect of the unbounded Laplacian is not obvious. 
Therefore, Trotter product formulas (similar as in \cite{KTr1}) can be used, but with a source term.
We denote $\mathbf{v}^F=(v^F_1,\cdots v^F_n)^T$ with $n$ infinite vectors $v^F_i=(v_{i\alpha})_{\alpha \in {\mathbb Z}^n}$. Moreover, in the following the symbol $\doteq$ means that the identity holds up to an infinitesimal error. Furthermore, the entries in $(\delta_{ij\alpha\beta})$ are Kronecker-$\delta$s which describe the unit $n{\mathbb Z}^n\times n{\mathbb Z}^n$-matrix and we denote
\begin{equation}
\mathbf{f}^F=(f_1,\cdots ,f_n)^T,~\mbox{where}~f_i=(f_{i\alpha})_{\alpha\in {\mathbb Z}^n},~1\leq i\leq n.
\end{equation}
At each time step $m$ we have
\begin{equation}
\begin{array}{ll}
\mathbf{v}^F((m+1)\delta t)\doteq
\left( \delta_{ij\alpha\beta}\exp\left(-\nu 4\pi^2 \sum_{i=1}^n\alpha_i^2 \delta t \right)\right)\times\\
\\
\times \left( \exp\left( \left( \left( e_{ij\alpha\beta}(m\delta t)\right)_{ij\alpha\beta}\right)\delta t\right)  \right)\mathbf{v}^F(m\delta t)+\mathbf{f}^F(m\delta t)
\end{array}
\end{equation}
(as a representation for $\mathbf{v}^F(m\delta t)$) solves the equation (\ref{navode200first}) with an error of order $\delta t^2$.
The equality up to infinitesimals $\doteq$ still holds of we evaluate exponentials up to first order, i.e., we have
\begin{equation}\label{aabb}
\begin{array}{ll}
\mathbf{v}^{F}((m+1)\delta t)\doteq \left( \delta_{ij\alpha\beta}\left(1-\nu 4\pi^2\sum_{i=1}^n\alpha_i^2 \delta t \right)\right)\times\\
\\
\times \left( 1+ \left( \left( e_{ij\alpha\beta}\right)_{ij\alpha\beta}(m\delta t)\right)\delta t \right)  \mathbf{v}^F(m\delta t)+\mathbf{f}^F(m\delta t).
\end{array}
\end{equation}
A precise formal interpretation of the scheme may be given in a nonstandard framework of enlarged universes or in other analytical frameworks such as the functional analytic frame work of Connes. 
We provide here a more informal argument for a bow up at the inviscid limit ($\nu=0$) and will give more details elsewhere. 
Now, for an appropriate choice of $f_i\in L^{2},~1 \leq i\leq n$ we have  $f_{i\alpha}\sim\frac{1}{|\alpha|^{1.5+\epsilon}}$ for small $\epsilon >0$ and the analysis of a hyperfinite scheme with appropriate time step size $\delta t$ leads first to
product term
\begin{equation}
{\Big |}\sum_{j=1}^n\sum_{\gamma\in {\mathbb Z}^n}e_{ij\alpha\gamma}(\delta t)v^f_{j\gamma}(\delta t){\Big |}_{\nu=0}\gtrsim\frac{1}{|\alpha|^{5+2\epsilon-1-3}},
\end{equation}
such in the next step the velocity component $v_{i\alpha}(2\delta t)$ with $\alpha_i\geq 0$ for all $1\leq i\leq n$ is of order
\begin{equation}
\frac{1}{|\alpha|^{2+2\epsilon }}.
\end{equation}
This implies that the next Euler  product term gets an order
\begin{equation}
\gtrsim\frac{1}{|\alpha|^{4+4\epsilon-1-3}}=\frac{1}{|\alpha|^{4\epsilon}}
\end{equation}
such that in the next step the velocity component $v_{i\alpha}(2\delta t)$ with $\alpha_i\geq 0$ for all $1\leq i\leq n$ is of order
\begin{equation}
\frac{1}{|\alpha|^{1+4\epsilon }},
\end{equation}
and so on, indicating a blow-up.
More recently still singular solutions of an averaged model have been announced in \cite{T}. Independently of the pending verification and its result this model is a good example in order to illustrate why an auto-controlled global regular scheme cannot be applied. Working in analogy with the Katz -Pavlovic model T. Tao proposes a construction of a blow up solution of an averaged equation solution $u^a(.)$ of
\begin{equation}
u^a(t)=e^{\Delta t}u^a_0+\int_0^te^{(t-s)\Delta }\tilde{B}\left(u^a(s),u^a(s) \right)ds 
\end{equation}
with the  averaged nonlinear Euler term
\begin{equation}
\begin{array}{ll}
\tilde{B}(u,v):=\\
\\
\int_{\Omega} \mbox{Rot}_{R^{-1}_{3,\omega}}\overline{m_{3,\omega}(D)}B\left(m_{1,\omega}(D)\mbox{Rot}_{R_{1,\omega}}u, m_{2,\omega}(D)\mbox{Rot}_{R_{2,\omega}}v\right)  d\mu ({\omega}), 
\end{array} 
\end{equation}
where $m_{i}(D):\Omega\rightarrow {\cal M}_0,~\hat{m_i(D)(\xi)u}(\xi)=m_i(\xi)\hat{u}(\xi)$ is a random Fourier multiplier and ${\cal M}_0$ is a $\sigma$-algebra in a measure space $\left(\Omega,\mu,{\cal M}_0\right)$, and $\mbox{Rot}_{R_i,\omega}$ are random rotations with values in $SO_3$ (cf. \cite{T} for details). Similar as in true Navier Stokes model the equation for the related function
 \begin{equation}
u^{\lambda,a}(t)=\lambda u^a(s) 
\end{equation}
\begin{equation}
u^{\lambda,a}(t)=e^{\Delta t}u^{\lambda,a}_0+\int_0^te^{(t-s)\Delta }\lambda\tilde{B}\left(u^{\lambda,a}(s),u^{\lambda,a}(s) \right)ds, 
\end{equation}
which gives an additional small parameter $\lambda$ to the nonlinear term. The effect of this additional $\lambda>0$ may be similar as in the case of the ODE $\stackrel{\cdot}{x}^{\lambda}=\lambda x^2,~x^{\lambda}(0)=\lambda x_0\neq 0$ considered above, where a singularity at $t=1$ of the solution $\frac{1}{1-t}$ of the $\stackrel{\cdot}{x}= x^2,~x^(0)=x_0\neq 0$  to a 'delayed' singularity at $t=\frac{1}{\lambda}$ of the solution $\frac{1}{1-\lambda t}$ of the former equation with parameter $\lambda$.  Furthermore, similarly as in the case of the original Navier Stokes equation models an auto-control function via time dilatation can be set up. So there is no difference from the perspective of these two features of the auto-controlled scheme between an averaged model and the original model. However both features in combination with a spatial effect make the difference. Let us consider this in more detail. For $t_0\geq 0$ and a time interval $\left[t_0,t_0+T\right]$ for some $T\in (0,1)$ we consider a subscheme for a comparison function $u^{t_0}_i,~\leq i\leq n$ for the Navier Stokes velocity component function $v_i,~1\leq i\leq n$ based on localized transformations of the form
\begin{equation}\label{uvloc}
\lambda(1+(t-t_0))u^{t_0}_i(\tau,x)=v_i(t ,x),~\lambda>0,
\end{equation}
where 
\begin{equation}\label{timedil}
(\tau (t) ,x)=\left(\frac{t-t_0}{\sqrt{1-(t-t_0)^2}},x\right).
\end{equation}
On the considered time interval this leads to
\begin{equation}
\frac{\partial}{\partial t}v_i(t,x)=\rho u_i(\tau,x)+\lambda(1+(t-t_0))\frac{\partial}{\partial \tau}u_i(\tau,x)\frac{t-t_0}{\sqrt{1-(t-t_0)^2}^3}.
\end{equation}
Denoting the inverse of $\tau(t)$ by $t(\tau)$, for the modes $u_{i\alpha},~\alpha\in {\mathbb Z}^n$ of $u_i,~1\leq i\leq n$ we get the equation   
\begin{equation}\label{navode200firsttimedil}
\begin{array}{ll}
\frac{d u_{i\alpha}}{d\tau}=\sqrt{1-t(\tau)^2}^3
\sum_{j=1}^n\nu \left( -\frac{4\pi \alpha_j^2}{l^2}\right)u_{i\alpha}-\\
\\
\lambda(1+t(\tau))\sqrt{1-t(\tau)^2}^3\sum_{j=1}^n\sum_{\gamma \in {\mathbb Z}^n}\frac{2\pi i \gamma_j}{l}u_{j(\alpha-\gamma)}u_{i\gamma}+\\
\\
\lambda(1+t(\tau))\sqrt{1-t(\tau)^2}^3\frac{2\pi i\alpha_i1_{\left\lbrace \alpha\neq 0\right\rbrace}\sum_{j,k=1}^n\sum_{\gamma\in {\mathbb Z}^n}4\pi^2 \gamma_j(\alpha_k-\gamma_k)u_{j\gamma}u_{k(\alpha-\gamma)}}{\sum_{i=1}^n4\pi^2\alpha_i^2}\\
\\
-\sqrt{1-t^2(\tau)}^3(1+t(\tau))^{-1}u_{i\alpha}.
\end{array} 
\end{equation}
Now the crucial observation that spatial regularity (polynomial decay of some order $\geq n+2$ is inherited by the indicated scheme. Assume that at time $t_0\geq 0$ we have
\begin{equation}
\forall 1\leq i\leq n~\forall \alpha\in {\mathbb Z}^n:~{\big |}u^{t_0}_{i\alpha}(t_0){\big |}\leq \frac{C}{1+|\alpha|^{n+2}},
\end{equation}
where in the following we may assume that the torus diameter is $l=1$. 
Concerning the nonlinear Burgers term we get for some constants $C>0$ and $c>0$
\begin{equation}\label{cC}
\lambda\sum_{j=1}^n\sum_{\gamma \in {\mathbb Z}^n}{\big |}2\pi i \gamma_j{\big |}{\big |}u_{j(\alpha-\gamma)}{\big |}{\big |}u_{i\gamma}{\big |}\leq \sum_{\gamma\in {\mathbb Z}^n}\lambda \frac{C}{1+|\alpha-\gamma|^{n+2}}\frac{C}{1+|\gamma|^{n+1}}\leq \frac{\lambda cC^2}{1+|\alpha|^{n+2}},
\end{equation}
a growth which becomes small for small $\lambda >0$, especially  compared to the damping term of the auto-controlled scheme, i.e., the term
\begin{equation}
-\sqrt{1-t^2(\tau)}^3(1+t(\tau))^{-1}u_{i\alpha}.
\end{equation}
Similarly, concerning the Leray projection term we have for some $c,C>0$ (depending only on the dimension)
\begin{equation}\label{cCC}
\begin{array}{ll}
\lambda\frac{|2\pi i\alpha_i|1_{\left\lbrace \alpha\neq 0\right\rbrace}\sum_{j,k=1}^n\sum_{\gamma\in {\mathbb Z}^n}4\pi^2 |\gamma_j||(\alpha_k-\gamma_k)|{\big |}u_{j\gamma}{\big |}{\big |}u_{k(\alpha-\gamma)}{\big |}}{\sum_{i=1}^n4\pi^2\alpha_i^2}\\
\\
\leq \lambda\frac{|\alpha_i|}{\sum_{i=1}^n\alpha_i^2}\sum_{\gamma\in {\mathbb Z}^n}\frac{C}{1+|\alpha-\gamma|^{n+1}}\frac{C}{1+|\gamma|^{n+1}}\leq \frac{\lambda cC^2}{1+|\alpha|^{n+2}},
\end{array}
\end{equation}
where for $|\alpha|=0$ the Leray projection term cancels. Then a local contraction result shows that this regularity is preserved in the time interval $\left[t_0,t_0+T \right]$, i.e. we get 
\begin{equation}
\forall t\in [t_0,T]~\forall 1\leq i\leq n~\forall \alpha\in {\mathbb Z}^n:~{\big |}u^{t_0}_{i\alpha}(t){\big |}\leq \frac{C}{1+|\alpha|^{n+2}},
\end{equation}
where we may use the damping term of the auto-control scheme in order to ensure that we can use the same constants $c,C>0$ in the latter statement as before (note that we have a freedom of choice for $T\in (0,1)$, and a moderate choice such as $T=\frac{1}{2}$ ensures that the the damping term is dominant). All these observation hold for the incompressible Euler equation as well. However, this does not mean that there are no singular solutions for the Euler equation. Indeed, as we show below singular solutions for the incompressible Euler equation can be constructed via auto-controlled schemes as well. The upshot of this is that we loose uniqueness in the case of the incompressible Euler equation. For the incompressible Navier Stokes equation with the additional viscosity term we have uniqueness by the Cronwall lemma. It is not obvious to me whether the averaged Navier Stokes equation has a unique solution, but we note that uniqueness is needed in order to conclude that there is no global regular solution. The additional effect of the viscosity term (the Laplacian) is the damping of all non-zero modes and the strong damping of the higher order modes. This effect can be expressed and investigated by Trotter product formulas as considered in \cite{KTr1,KTr2}. However, it is obvious that the increment of the averaged Navier Stokes equation cannot be estimated by an auto-controlled scheme as in the case of the original Navier Stokes equation as described in the introduction and argued in detail below.
\section{Proof of theorem \ref{mainthm} by a global self-controlled scheme}
In a first step we define a natural iteration scheme for the velocity function.
In a second step we set up a global scheme with an auto-control caused by a damping term introduced via time coordinate transformation at each time step. In a third step  we prove local existence and  regularity of the Navier Stokes equation via a local contraction result for the original scheme using representations which involve {\it first order spatial derivatives} of the Gaussian. These representations are fundamentally different from weak schemes based on energy inequalities as the first order spatial derivative of the Gaussian allows for an upper bound such that the growth of the nonlinear terms can be offset by the damping term of the auto-controlled scheme and by viscosity damping. At  each time step of the original  iteration scheme for the velocity function of a time-local Navier Stokes equation we get - via time-dilatation- a global subscheme on a global domain (which is time-local in original coordinates), where the functional series members of the subscheme inherit local regularity of the local velocity value function components. The local regularity can also be derived for the functions with the damping term a fortiori, of course. In any case, the subscheme is used to prove certain growth properties of the velocity functions which are related to the preservation of upper bounds by the global subscheme of each time step of the original scheme for the velocity, where the damping terms of the subscheme are used at each substep. Variations of time dilatation transformations lead to a global linear upper bound of the velocity solution functions and their derivatives up to order $m\geq 2$ if the regularity assumptions of order $m\geq 2$ are satisfied. Furthermore, certain variations of the scheme lead to constructive forms of the argument. This refinement of the upper bound argument is also interesting as it facilitates the application of the local contraction result. Note that we used a similar idea in \cite{K} in order to prove the existence of singular vorticity solutions of the incompressible Euler equation, although the transformation is naturally different as we do not want to prove the existence of singularities but global regular existence. If we rely on local existence via contraction results of the original equation, then the proof becomes non-constructive. Constructive results can be obtained if we consider local contraction properties of the subscheme functions where we may take viscosity damping into account. Such local contraction properties hold a fortiori for the subschemes as they have a damping term. For the subscheme functions time steps can be chosen uniformly. However, this transfers to numerical (finite time step schemes) for the original scheme only if we can transfer the uniform upper bounds from the subschemes to the original velocity function.  This article is mainly on global linear upper bounds and analytical proofs of a global regular solution of the incompressible Navier Stokes equation, but we obtain a constructive version as well. Moreover, taking viscosity damping into account we get a global regular upper bound which is independent of time. Here, we observe that for a given subscheme applied at time  $t_0\geq 0$ the viscosity damping becomes can offset the growth of the nonlinear terms after finite time (not after infinitesimal time) if the data at are sufficiently large. If the data are smaller, then growth of the nonlinear terms can be allowed for some time. Both phenomena are in accordance with the observation that the Navier stokes equation operator does not define a contractive semigroup.

Next we define the global scheme. In order to have time step size $1$ along with small coefficients of the spatial derivative terms at each time step we sometimes introduce the time transformation $\tau =\rho t$ for small time step size $\rho >0$. For the subscheme we shall observe that the time step size $\rho>0$ can be chosen independently of the time step number of the subscheme. A global scheme with constant time step size can be obtained for the original time scheme for uniform global upper bounds. If we have time-linear upper bounds, then we have to rely on properties of the subscheme in order to set up a numerical scheme for the original velocity function. Note that the time step size with respect to original time may depend on the time step number $l\geq 1$ in the form $\rho_l\gtrsim \frac{1}{l}$, which still leads to a global scheme. However, we shall obtain constructive versions which allow for a constant time step size $\rho >0$ which is independent of the time step number especially. In any case, in the following we suppress the possible dependence of the time step size $\rho$ of the time step number $l\geq 1$ and will come back to this issue only in special situations when it is essential in order to simplify arguments. This policy of notation seems to be justified, as the growth estimates are via the subschemes of  transformed functions of comparison, where these subschemes have uniform time step size. Next we consider the local scheme for the velocity function of the original equation (\ref{Navleray0}). For each time step $l\geq 1$ we assume that we have determined data $v^{\rho,l}_i(l-1,.),~1\leq i\leq n$ at the previous time step number $l-1\geq 0$, where at the first time step $l=1$ we start the recursion with $v^{\rho,l}_i(0,.)=h_i$. At each time step the local functions $v^{\rho,l}_i:[l-1,l]\times {\mathbb R}^n\rightarrow {\mathbb R}$ are determined as solutions of the local equations in $(\tau,x)$-coordinates of the form   

\begin{equation}\label{Navleray2a}
\left\lbrace \begin{array}{ll}
\frac{\partial v^{\rho,l}_i}{\partial \tau}-\rho\nu\sum_{j=1}^n \frac{\partial^2 v^{\rho,l}_i}{\partial x_j^2} 
+\rho\sum_{j=1}^n v^{\rho,l}_j\frac{\partial v^{\rho,l}_i}{\partial x_j}=\\
\\ \hspace{1cm}\rho\sum_{j,m=1}^n\int_{{\mathbb R}^n}\left( \frac{\partial}{\partial x_i}K_n(x-y)\right) \sum_{j,m=1}^n\left( \frac{\partial v^{\rho,l}_m}{\partial x_j}\frac{\partial v^{\rho,l}_j}{\partial x_m}\right) (\tau,y)dy,\\
\\
\mathbf{v}^{\rho,l}(l-1,.)=\mathbf{v}^{\rho,l-1}(l-1,.),
\end{array}\right.
\end{equation}
on the domain $[l-1,l]\times {\mathbb R}^n$, where it is essential that the scheme inherits regularity of the final data of the previous time step $\mathbf{v}^{\rho,l-1}(l-1,.)=\left(v^{\rho,l}_1(l-1,.),\cdots ,v^{\rho,l}_n(l-1,.) \right)^T$ recursively (shown below).  
Next for each time step $l\geq 1$ we define a subscheme. First, at each time step $l$ we transform the local equation via a time dilatation coordinate transformation with $\sigma:[l-1,l)\rightarrow [0,\infty)$ (supressing a superscript $l$ of $\sigma$ since it will be implicitly clear where we are) and of the form
\begin{equation}\label{timedil}
(\sigma (\tau_l) ,x)=\left(\frac{\tau_l}{\sqrt{1-\tau_l^2}},x\right), 
\end{equation}
(where $\tau_l=\tau-(l-1)$), to the related local equation for the function
\begin{equation}\label{uvtrans}
(1+\tau )u^{\rho,l}_i(\sigma,x)=v^{\rho,l}_i(\tau ,x),
\end{equation}
and then to the function $u^{\rho,l}_i,~1\leq i\leq n$ itself. Note that in (\ref{uvtrans}) the $\tau$ of the factor in front of $u^{\rho,l}_i$ is global and the $\tau_l$ is local. Later we shall see that a localized and global versions of this transformation, where at time step $l$ we choose
\begin{equation}\label{uvl0}
\lambda(1+(\tau-(l-1)) )u^{\rho,l}_i(\sigma,x)=v^{\rho,l}_i(\tau ,x),
\end{equation}
or
\begin{equation}\label{uvlglob}
\lambda(1+\tau )u^{\rho,l}_i(\sigma,x)=v^{\rho,l}_i(\tau ,x),
\end{equation}
along with $\tau\in [l-1,l)$ and some appropriate (in general small) constant $\lambda>0$ lead to constructive versions of the scheme. This $\lambda$ is not essential but convenient as it simplifies some estimates of damping. Sometime we shall set this parameter equal to $1$, but for local forms of the time delay transformation we shall use it as it simplifies the prove of preservation of regular upper bounds for the localized scheme. The effect on the equation is that the nonlinear terms get an additional coefficient factor $\lambda$ (while initial data become larger).   
Note the difference to the transformation in \cite{K} with respect to time: here, we have a factor $(1+\tau)$ which will lead to global upper bounds of the velocity component functions $v_i$ which are linear in time. In a localised interpretation of the argument the factor  $(1+(\tau-(l-1)) )$ of $u^{\rho,l}$ is in the interval $[1,2]$ and the damping (in form of viscosity damping and potential damping) is designed in order to compensate for the growth of the velocity functions and their multivariate derivatives up to some order $p$ which is not only sufficient in order to prove global regular existence but also to prove global regular bounded solutions.
\begin{rem}
We shall use a localized transformation similar as in (\ref{uvl0}) in order to show the statement of decay to zero at infinite time as stated in Corollary \ref{maincor}.
\end{rem}

\begin{rem}
Alternatively and with a constant parameter $\mu_0>0$ we could use the transformations of the form
\begin{equation}\label{alt}
(1+\tau)^{\mu_0}u^{\rho,l}_i(\sigma,x)=v^{\rho,l}_i(\tau ,x),~\mu>0
\end{equation}
(or similar transformation with factor $\rho$ as above), and have
\begin{equation}
\frac{\partial}{\partial \tau}v^{\rho,l}_i(\tau,x)=\mu_0 (1+t)^{\mu_0-1}u^{\rho,l}_i(\sigma,x)+(1+\tau)^{\mu_0}\frac{\partial}{\partial \sigma}u^{\rho,l}_i(\sigma,x)\frac{d \sigma}{d \tau},
\end{equation}
where for some $\mu_0>0$ a variation of the argument leads to global linear upper bounds. \end{rem}
   
We denote the inverse of the time dilatation in (\ref{timedil}) by $\tau\equiv \tau(\sigma)$.
Suppressing subscripts $l$ of $\tau_l$ for convenience of notation note that for the transformation in (\ref{uvtrans}) we have
\begin{equation}
\frac{\partial}{\partial \tau}v^{\rho,l}_i(\tau ,x)=u^{\rho,l}_i(\sigma,x)+(1+\tau)\frac{\partial}{\partial \sigma}u^{\rho,l}_i(\sigma,x)\frac{d \sigma}{d \tau},
\end{equation}
where 
\begin{equation}
\begin{array}{ll}
\frac{d\sigma}{d \tau}=\frac{d}{d\tau}\left(\frac{\tau}{\sqrt{1-\tau^2}}\right)
=\frac{1}{\sqrt{1-\tau^2}}+\frac{-\frac{1}{2}\tau (-2\tau)}{\sqrt{1-\tau^2}^3}=\frac{1}{\sqrt{1-\tau^2}^3}.
\end{array}
\end{equation}
 Again $\tau$ and $\sigma$ may have a superscript ornament $l$ but we know implicitly where we are and may keep notation simple.
For each time step $l\geq 1$ the equation for $u^{\rho,l}_i$ is an equation on the whole domain $[0,\infty)\times {\mathbb R}^n$. Abbreviating for $k\in \left\lbrace 0,1,2 \right\rbrace $
\begin{equation}\label{mu}
\mu =\mu(\sigma)=\frac{\sqrt{1-\tau^2(\sigma)}^3}{1+\tau(\sigma)},~\mu^{\tau, k}:=(1+\tau(\sigma))^k\mu,
\end{equation}
where for all $\sigma\in [0,\infty)$ $\tau(\sigma)\in [l-1,l)$ and $\tau_l(\sigma)\in [0,1)$ at time step $l$, the equation for $u^{\rho,l}_i,~1\leq i\leq n$ is of the form (let $\lambda =1$ for simplicity)
\begin{equation}\label{Navleray2}
\left\lbrace \begin{array}{ll}
\frac{\partial u^{\rho,l}_i}{\partial \sigma}-\rho\mu^{\tau,1}\nu\sum_{j=1}^n \frac{\partial^2 u^{\rho,l}_i}{\partial x_j^2} 
+\rho\mu^{\tau,2}\sum_{j=1}^n u^{\rho,l}_j\frac{\partial u^{\rho,l}_i}{\partial x_j}+\mu u^{\rho,l}_i=\\
\\ \rho \mu^{\tau,2}\sum_{j,m=1}^n\int_{{\mathbb R}^n}\left( \frac{\partial}{\partial x_i}K_n(x-y)\right) \sum_{j,m=1}^n\left( \frac{\partial u^{\rho,l}_m}{\partial x_j}\frac{\partial u^{\rho,l}_j}{\partial x_m}\right) (\sigma,y)dy,\\
\\
\mathbf{u}^{\rho,l}(0,.)=\frac{1}{1+(l-1)}\mathbf{v}^{\rho,l-1}(l-1,.),
\end{array}\right.
\end{equation}
and may to be solved on the whole domain $[0,\infty)\times {\mathbb R}^n$. For more constructive arguments we may consider such transformations on the time interval $\left[l-1,l-\frac{1}{2}\right]$ which lead to transformed subproblems on finite time horizons. In any case we need localized transformations in order to make the argument constructive for large times as in (\ref{uvl0}). If we choose a transformation as in (\ref{uvl0}), then we have a dominant damping term independently of the size of time $\tau$, but we have to show then that the upper bounds are preserved for the original velocity component functions in a strict sense. This is only possible if we take viscosity damping into account. In contrast for a coefficient as in (\ref{uvtrans}) we have linear growth with respect to time of the terms with spatial derivatives while the damping term decreases linearly with respect to time. This is no obstacle to prove global existence of a solution analytically, but it is clear that we need some localized variation in order to get a numerical and efficient scheme. The analysis of the localized scheme shows that the semigroup induced by Navier Stokes equation operator has no strong contractive property. The transformations in (\ref{uvtrans}) or in (\ref{uvlglob}) are designed for an analytical global regular existence proof for the original Navier Stokes equation in (\ref{Navleray2}) via global upper bounds which are linear with respect to time. Anyway, the difference of the original velocity function to the transformed comparison function $u^{\rho,l}_i$ is that the latter has the potential term
\begin{equation}
\mu u^{\rho,l}_i,
\end{equation}
where this term has the 'right' sign as it has a damping effect, which gives us hopes that we can prove the existence of global upper bounds. Note that this potential term is the only term along with the partial time derivative term  of the value function $u^{\rho,l}_i,~1\leq i\leq n$, which does not have the small time step size $\rho$ as a coefficient, and, hence, may dominate all the spatial terms after some local time of the time step, whenever the value functions become large. This observation increases that hopes. 
Note also that we suppress an upper script $l$ for $\mu$ as we did for $\tau$, since we always know where we are. Next for each time step $l$ having determined $\frac{1}{1+(l-1)}\mathbf{v}^{\rho,l-1}(l-1,.)$ we solve (\ref{Navleray2}) by a subscheme of functions. Although for given $l$ and $k\in \left\lbrace 0,1,2\right\rbrace$ we have
\begin{equation}\label{mucoeffrem}
\mu^{\tau,k}(\sigma)=\frac{\sqrt{1-\tau^2(\sigma)}^3}{1+\tau(\sigma)}(1+\tau(\sigma))^k\leq \sqrt{1-\tau^2(\sigma)}^3(1+\tau(\sigma))\leq c
\end{equation}
for some constant $c>0$ dependent on the time horizon (as $\tau\equiv \tau_l(\sigma)\in [0,1]$ at each $l$), for any finite time horizon $T>0$ we have small coefficients with time step size factor $\rho$ everywhere except for the partial time derivative and the damping potential term. Note that any upper bound $c>0$ in (\ref{mucoeffrem}) depends on the time step number $l$ or on a finite time horizon $T$, but once an arbitrary finite time horizon $T>0$ is chosen the coefficients become small for small $\rho$. There is no such restriction for local transformations of course. At this point we cannot be sure that the potential term really dominates the Leray projection term (in case of a multivariate Burgers equation this is different of course). For this reason we have to set up a doubled subscheme. First we define a family of recursively defined local nonlinear incompressible Navier Stokes equations with damping terms, where for each $l\geq 1$ and each $m\geq 1$ the value function  $u^{\rho,l,m}_i,~1\leq i\leq n$ is a solution of
\begin{equation}\label{Navleray3}
\left\lbrace \begin{array}{ll}
\frac{\partial u^{\rho,l,m}_i}{\partial \sigma}-\rho\mu^{\tau,1}\nu\sum_{j=1}^n \frac{\partial^2 u^{\rho,l,m}_i}{\partial x_j^2} 
+\rho\mu^{\tau,2}\sum_{j=1}^n u^{\rho,l,m}_j\frac{\partial u^{\rho,l,m}_i}{\partial x_j}+\mu u^{\rho,l,m}_i=\\
\\ \rho \mu^{\tau,2}\sum_{j,p=1}^n\int_{{\mathbb R}^n}\left( \frac{\partial}{\partial x_i}K_n(x-y)\right) \sum_{j,p=1}^n\left( \frac{\partial u^{\rho,l,m}_p}{\partial x_j}\frac{\partial u^{\rho,l,m}_j}{\partial x_p}\right) (\sigma,y)dy,\\
\\
\mathbf{u}^{\rho,l,m}(m-1,.)=\mathbf{u}^{\rho,l,m-1}(m-1,.),
\end{array}\right.
\end{equation} 
and on the domain $[m-1,m]\times {\mathbb R}^n$. In addition for $m=1$ we assume
\begin{equation}
\mathbf{u}^{\rho,l,1}(0,.)=\frac{1}{1+(l-1)}\mathbf{v}^{\rho,l-1}(l-1,.).
\end{equation}
Note that $\sigma$ is now restricted to $[l-1,l]$ as is $\mu^{\tau,k}=\mu^{\tau,k}(\sigma)=\frac{\sqrt{1-\tau^2(\sigma)}^3}{1+\tau(\sigma)}(1+\tau(\sigma)^k),~\sigma\in [m-1,m],~k\in \left\lbrace 0,1,2\right\rbrace$. Note that $\mu=\mu^{\tau,0}$ in our notation. We could make this more explicit adding to $\mu$ and $\mu^{\tau,k}$ a double superscript $l,m$, but, as the value function $u^{\rho,l,m}_i$ bears this double superscript we know where we are and suppress such kind of notation for the sake of simplicity. 
Note that we do not need to solve the time-local equation in (\ref{Navleray3}) by a local iteration scheme if we have solved the local equation for $v^{\rho,l}_i,~1\leq i\leq n$, since $u^{\rho,l,m}_i,~1\leq i\leq n$ inherits the relevant properties from local solutions of $v^{\rho,l}_i,~1\leq i\leq n$. We may use the local scheme for the velocity function $v^{\rho,l}_i,~1\leq i\leq n,$  in order to prove local regular existence at time step $l\geq 1$, and then we use the scheme of the functions $u^{\rho,l,m}_i,~1\leq i\leq n$ (which is local with respect to original time and global with respect to transformed time) in order to prove that this scheme inherits global upper bounds for all substeps $m\geq 1$. This is one option. Well it leads to versions of global existence proofs which have some non-constructive features. One non-constructive feature is that we have infinitely many Cauchy problems for $u^{\rho,l,m}_i$ on infinite domains. Another non-constructive feature is that we may need transfinite induction if we derive the step size from the local equations for the functions $v^{\rho,l}_i,~1\leq i\leq n$. Well, constructive variations can be obtained by a) using step size $\left[l-1,l-\frac{1}{2} \right]$ or similar step sizes in order to have subschemes for $u^{\rho,l,m}_i$ of finite time horizon, and b) we may derive contraction results for the subscheme equation $u^{\rho,l,m}_i$ itself.    The interplay of both schemes leads then to a global  upper bound of the velocity function via a recursion argument. As we indicated above the type of upper bounds which we obtain depend on the variation of argument which we consider. For the scheme considered above the upper bound is linear in time while for certain localized schemes we get a stronger result of a uniform global upper bound independent of time. We emphasize again that some variations of arguments are constructive while others are not, but even the simpler variations of arguments lead to a global regular existence proof from the analytical point of view. Next we prove local existence of regular solutions $v^{\rho,l}_i,~1\leq i\leq n,~l\geq 1$ for the equation system in (\ref{Navleray2a}). Local existence means that there is some step size $\rho >0$ for which we have $v^{\rho,l}_i\in C^1\left(\left(l-1,l\right],H^q\cap C^q  \right)\cap C^0\left(\left[l-1,l\right],H^q\cap C^q  \right)$ for $l\geq 2$ and $q\geq 4$  if the data $v^{\rho,l-1}_i(l-1,.)\in H^q\cap C^q$ for $m\geq 2$. At the first time step we have $v^{\rho,l}_i\in C^1\left(\left(l-1,l\right],H^q\cap C^q  \right)\cap C^0\left(\left[l-1,l\right],H^q\cap C^q  \right)$ (cf. remark \ref{initialrem}).

In order to obtain such local existence results, we consider the functional series $v^{\rho,l,m}_i,~1\leq i\leq n,~m\geq 0$, where each member $v^{\rho,l,m}_i,~1\leq i\leq n$ satisfies an equation of the form
\begin{equation}\label{Navleray4}
\left\lbrace \begin{array}{ll}
\frac{\partial v^{\rho,l,m}_i}{\partial \tau}-\rho\nu\sum_{j=1}^n \frac{\partial^2 v^{\rho,l,m}_i}{\partial x_j^2}
+\rho\sum_{j=1}^n v^{\rho,l,m-1}_j\frac{\partial v^{\rho,l,m-1}_i}{\partial x_j}=\\
\\ \rho\sum_{j,p=1}^n\int_{{\mathbb R}^n}\left( \frac{\partial}{\partial x_i}K_n(x-y)\right) \sum_{j,p=1}^n\left( \frac{\partial v^{\rho,l,m-1}_p}{\partial x_j}\frac{\partial v^{\rho,l,m-1}_j}{\partial x_p}\right) (\tau,y)dy,\\
\\
\mathbf{v}^{\rho,l,m}(l-1,.)=\mathbf{v}^{\rho,l-1}(l-1,.).
\end{array}\right.
\end{equation}
For each iteration step $m$ we are on the domain $[l-1,l]\times {\mathbb R}^n$ of the original scheme (in transformed time coordinates). The local solution function is constructed via the representation
\begin{equation}
v^{\rho,l}_i=v^{\rho,l,0}_i+\sum_{m= 1}^{\infty}\delta v^{\rho,l,m}_i,~1\leq i\leq n,
\end{equation}
along with the increments
\begin{equation}
\delta v^{\rho,l,m}:=v^{\rho,l,m}-v^{\rho,l,m-1},~\delta v^{\rho,l,0}:=v^{\rho,l,0}-v^{\rho,l-1}
\end{equation}
for $m\geq 1$ and $m=0$ respectively.
Here, for $m=1$ we define $v^{\rho,l,0}_i$ to be the solution of the linearized equation
\begin{equation}\label{Navleray5}
\left\lbrace \begin{array}{ll}
\frac{\partial v^{\rho,l,0}_i}{\partial \tau}-\rho\nu\sum_{j=1}^n \frac{\partial^2 v^{\rho,l,0}_i}{\partial x_j^2}
+\rho\sum_{j=1}^n v^{\rho,l-1}_j(l-1,.)\frac{\partial v^{\rho,l-1}_i(l-1,.)}{\partial x_j}=\\
\\ \rho \sum_{j,p=1}^n\int_{{\mathbb R}^n}\left( \frac{\partial}{\partial x_i}K_n(x-y)\right) \times \\
\\
\times \sum_{j,p=1}^n\left( \frac{\partial v^{\rho,l,l-1}_p(l-1,.)}{\partial x_j}\frac{\partial v^{\rho,l-1}_j(l-1,.)}{\partial x_p}\right) (\tau,y)dy,\\
\\
\mathbf{v}^{\rho,l}(l-1,.)=\mathbf{v}^{\rho,l-1}(l-1,.),
\end{array}\right.
\end{equation}
and the increment $\delta v^{\rho,l,m}_i$ satisfies 
\begin{equation}\label{Navleray6}
\left\lbrace \begin{array}{ll}
\frac{\partial \delta v^{\rho,l,m}_i}{\partial \tau}-\rho\nu\sum_{j=1}^n \frac{\partial^2 \delta v^{\rho,l,m}_i}{\partial x_j^2}+\rho\sum_{j=1}^n \delta v^{\rho,l,m-1}_j\frac{\partial v^{\rho,l,m-1}_i}{\partial x_j}\\
\\
+\rho\sum_{j=1}^n v^{\rho,l,m-2}_j\frac{\partial \delta v^{\rho,l,m-1}_i}{\partial x_j}\\
\\
=\rho \sum_{j,p=1}^n\int_{{\mathbb R}^n}\left( \frac{\partial}{\partial x_i}K_n(x-y)\right) \sum_{j,p=1}^n\left( \frac{\partial \delta v^{\rho,l,m-1}_p}{\partial x_j}\frac{\partial v^{\rho,l,m-1}_j}{\partial x_p}\right) (\tau,y)dy\\
\\
+ \rho\sum_{j,p=1}^n\int_{{\mathbb R}^n}\left( \frac{\partial}{\partial x_i}K_n(x-y)\right) \sum_{j,p=1}^n\left( \frac{\partial v^{\rho,l,m-2}_p}{\partial x_j}\frac{\partial \delta v^{\rho,l,m-1}_j}{\partial x_p}\right) (\tau,y)dy,\\
\\
\delta \mathbf{v}^{\rho,l,m}(l-1,.)=0.
\end{array}\right.
\end{equation}
Note that for $m=1$ we have $v^{\rho,l,m-2}_j=v^{\rho,l,-1}_j:=v^{\rho,l-1}_j(l-1,.)$ by definition.
The next step is to prove a local contraction and existence result via the local scheme described. Functions in $H^m\cap C^m$ have the convenience that they are bounded, and that their spatial derivatives up to order $m$ are bounded. As the auto-controlled scheme can offset the growth for some semi-groups such upper bounds are preserved by the scheme.
It may be found convenient then to estimate some of the nonlinear terms with their products of functions and spatial derivatives of functions using pointwise upper bounds, i.e., instead of using standard estimates of the form
\begin{equation}
{\big |}fg{\big |}_{H^m}\leq C_m {\big |}f{\big |}_{H^m}{\big |}g{\big |}_{H^m}
\end{equation}
for $m\geq 2$, we may use the fact that one factor and its spatial derivatives up to order $m\geq 2$ have a pointwise upper bound. As such upper bounds are preserved we shall assume in the following that $C$ is generically used such that it stands also for pointwise upper bounds of the value functions and its spatial derivatives up to order $m\geq 2$ if this is convenient. For certain schemes we shall determine $C>0$ explicitly, and in general it will be larger than the ${\big |}.{\big |}$-norm.

We underline the latter statements by adding a preliminary remark about the use of the spatial function space $H^p\cap C^p$ is in order here. The function space $C^p$ of continuous functions with continuous multivariate derivatives up to order $p$ is certainly not a closed space, but certain restrictions to compact domains are. For a functional series $(f_m)$ with $f_m\in C^p\cap H^p$  we now that for all $0\leq |\alpha|\leq p$ the multivariate derivative functions $D^{\alpha}_xf_n$ are in $C_0\equiv C_0\left( {\mathbb R}^n\right) $, where the latter function space is the close function space of continuous functions with decay to zero at infinity - note that $C_0(X)$ is a closed function space for any locally compact Hausdorff space $X$. According to the energy form of the Sobolev lemma we then have for $p>\frac{n}{2}$ (especially for $p\geq 2$ for $n=3$) that the limit function $f=\lim_{n\uparrow {\mathbb N}} f_{n}$ is H\"{o}lder continuous of some positive exponent and bounded as there is decay to zero at infinity.
However in our construction the approximating functions are located in the function space $C^p\cap H^p$, and this leads to contraction with spatial dependence in $H^{p,\infty}\cap C^2$
at the same time. Or, more precisely, for $0\leq |\alpha|\leq p$ the approximating functions $D^{\alpha}_xv^{\rho,l,k}_i,~1\leq i\leq n,~k\geq 0,~l\geq 1$ are in the function space $C_b$ of bounded continuous functions. This is not really needed as it is always possible to use standard estimates for products of functions in Sobolev norms, but it is convenient. Note that standard estimates for products of Sobolev norms can always be used as we have representations in the form (\ref{deltavrholrep}) where for the estimates of second order derivatives the Burgers term and the Leray projection term have only derivatives of order leads or equal to $m$ as the fundamental solution can take one spatial derivative and still has a weak singularity. 

The limit function of local scheme solutions $v^{\rho,l,m}_i,~1\leq i\leq n,$ of linearized equations approximating the local function $v^{\rho,l}_i,~1\leq i\leq n$ can   be used to find a classical representation of $v^{\rho,l}_i,~1\leq i\leq n$ in terms a a limit function $\lim_{m\rightarrow \infty}v^{\rho,l,m}_i,~1\leq i\leq n$, which serves as a bounded H\"{o}lder-continuous coefficient function of a fundamental solution equation in terms of which $v^{\rho,l}_i,~1\leq i\leq n$ is hypothetically represented. It is easy to verify that this is indeed the correct representation (with a coefficient which has indeed the regularity of a classical solution in $C^{1,2}$ with decay to zero at infinity). For higher regularity data in $C^m\cap H^m$ with $m>2$, and especially with $m\gg 2$ the argument can be simplified of course. This is one of the alternative ways to construct local classical solutions of the incompressible Navier Stokes equation. Concerning the verification that the limit function $\lim_{m\uparrow \infty}v^{\rho,l,m}_i,~1\leq i\leq n,$ is indeed the local classical solution $v^{\rho,l}_i,~1\leq i\leq n,$ of the local Navier Stokes equation Cauchy problem with data in $H^m\cap C^m,~m\geq 2$ we shall include a remark below. 
If we want to prove analytical results, then the result above together with the relations (\ref{uvrel2}) below for all time step numbers $l\geq 1$ dispenses us from proving local contraction and local existence results of the subscheme 
$u^{\rho,l,m}_i,~1\leq i\leq n,~m\geq 1$. 
However, the analysis of the equation for the scheme $u^{\rho,l,m}_i,~1\leq i\leq n$ is useful if we want to get a constructive existence proof which is also useful in order to 
construct numerical schemes. Well, let us first look at the velocity function components of a local scheme $v^{\rho,l}_i,~1\leq i\leq n,~l\geq 1$. 
 We have
\begin{thm}\label{loccontr}
Let $q\geq 2$. Given $v^{\rho,l-1}_i(l-1,.)\in C^q\cap H^q$ for $1\leq i\leq n$ along with
\begin{equation}
{\big |}v^{\rho,l-1}_i(l-1,.){\big |}_{C^q\cap H^q}\leq C
\end{equation}
for small $\rho >0 $ the 
local scheme functional sequence $v^{\rho,l,p}_{i}(\tau,.)\in C^q\cap H^q,~1\leq i\leq n$ and $\tau\geq 0$ has a limit $v^{\rho,l}_i,~1\leq i\leq n,$ along with
\begin{equation}
v^{\rho,l}_i=\lim_{p\uparrow \infty}v^{\rho,l,p}_i\in C^{1,q},~1\leq i \leq n,
\end{equation}
which solves the time-local Cauchy problem and such that $v^{\rho,l}_i(\tau,.)\in C^q\cap H^q$ for all $\tau\in [l-1,l]$. Here for time $l-1$ recall the specifications made in remark \ref{initialrem}. Furthermore, for small $\rho >0$ the first approximation increment at step $l$ satisfies 
\begin{equation}
\sup_{\tau\in [l-1,l]}{\big |}\delta v^{\rho,l,0}_i(\tau,.){\big |}_{C^q\cap H^q}\leq \frac{1}{4},
\end{equation}
and for $p+1\geq 1$ we have the contraction 
\begin{equation}
\sup_{\tau\in [l-1,l]}{\big |} \delta v^{\rho,l,p+1}_i(\tau,.){\big |}_{C^q\cap H^q}
\leq \frac{1}{2}\sup_{\tau\in [l-1,l]}{\big |}\delta v^{\rho,l,p}_i(\tau,.){\big |}_{C^q\cap H^q}
\end{equation}
for all $1\leq i\leq n$, $p\geq 1$.
\end{thm}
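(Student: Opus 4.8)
The plan is to run a contraction mapping argument for the linearized sequence (\ref{Navleray4})--(\ref{Navleray6}), the decisive point being to represent each iterate through a \emph{Duhamel-type formula} in which the nonlinear terms appear as convolutions against the first spatial derivative $G_{,j}$ of the Gaussian fundamental solution $G$ of the constant-coefficient heat operator $\partial_\tau-\rho\nu\Delta$ (we are still dealing with the original velocity $v^{\rho,l}_i$, not with the dilated $u^{\rho,l}_i$). For $\tau\in[l-1,l]$ one writes
\begin{equation}\label{vreprec}
v^{\rho,l,m}_i(\tau,x)=\int_{{\mathbb R}^n}G(\tau,x;l-1,y)v^{\rho,l-1}_i(l-1,y)\,dy+\rho\,N_i\big[v^{\rho,l,m-1}\big](\tau,x),
\end{equation}
where $N_i$ is the sum of the Burgers term and the Leray projection term; after one integration by parts in $y$ the Burgers part becomes $-\rho\int_{l-1}^{\tau}\!\int F_{ij}(\mathbf{v}^{\rho,l,m-1})(s,y)G_{,j}(\tau,x;s,y)\,dy\,ds$ with the quadratic $F_{ij}$ of the introduction, and the Leray part is the corresponding double convolution with $K_n$ and $G_{,i}$, split over a ball $B^n_R(x)$ and its complement exactly as in (\ref{Navlerayusubschemepre2i}). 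Differentiating (\ref{vreprec}) up to order $q$, one spatial derivative always lands on $G$ (so $G_{,j}$ stays of first order) or on $K_n$, while the remaining $\le q$ derivatives distribute onto the quadratic factors, which therefore stay of order $\le q$; here I would use that $v^{\rho,l,m-1}_i$ and its derivatives up to order $q$ are bounded, so the quadratic factors can be estimated pointwise rather than by product-in-Sobolev-norm inequalities.

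\textbf{Boundedness in the mixed norm.} Using the locally integrable bound ${\big |}G_{,j}{\big |}\lesssim|s-\tau|^{-\mu}|x-y|^{-(n+1-2\mu)}$ with $\mu\in(1/2,1)$ on $B^n_R(x)$ (Young's inequality for the $H^q$ part, the pointwise $C^q$ bounds for the $C^q$ part), together with the rapid Gaussian decay of $G_{,j}$ outside $B^n_R(x)$, the $L^2$-control coming from the assumed decay at infinity of the data (Remark \ref{initialrem}), and the fact that $\int_{l-1}^{\tau}|s-\tau|^{-\mu}\,ds<\infty$, one obtains
\begin{equation}\label{Nbnd}
\sup_{\tau\in[l-1,l]}{\big |}N_i[w]{\big |}_{C^q\cap H^q}\le C_1\Big(\sup_{\tau\in[l-1,l]}{\big |}w{\big |}_{C^q\cap H^q}\Big)^2 .
\end{equation}
Since $e^{\rho\nu(\tau-(l-1))\Delta}$ is a contraction on $C^q\cap H^q$, the homogeneous term in (\ref{vreprec}) is $\le C$, so for $\rho$ small (depending on $C$ and $C_1$, not on $l$) the map $w\mapsto$ right-hand side of (\ref{vreprec}) sends the ball $\{\sup_\tau|w|_{C^q\cap H^q}\le 2C\}$ into itself, whence $v^{\rho,l,m}_i(\tau,\cdot)\in C^q\cap H^q$ for all $m$. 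For the first increment one subtracts the datum: $\delta v^{\rho,l,0}_i=(e^{\rho\nu(\tau-(l-1))\Delta}-I)v^{\rho,l-1}_i(l-1,\cdot)+\rho N_i[v^{\rho,l-1}(l-1,\cdot)]$; the second summand is $O(\rho)$ in $C^q\cap H^q$ by (\ref{Nbnd}), and the first tends to $0$ for small $\rho$ by the continuity/smoothing properties of the heat semigroup (uniformly in $l$ once the scheme's regularity gain controls the high-frequency tails), so $\sup_\tau|\delta v^{\rho,l,0}_i|_{C^q\cap H^q}\le\frac14$ for $\rho$ small.

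\textbf{Contraction.} Subtracting the representations (\ref{vreprec}) for indices $m+1$ and $m$, the difference $\delta v^{\rho,l,m+1}_i$ has zero initial datum and a source bilinear with one factor $\delta v^{\rho,l,m}$ and the other factor one of $v^{\rho,l,m},v^{\rho,l,m-1}$ (bounded by $2C$); the same kernel estimates give $\sup_\tau{\big |}\delta v^{\rho,l,m+1}_i{\big |}_{C^q\cap H^q}\le\rho\,C_2\,C\,\sup_\tau{\big |}\delta v^{\rho,l,m}_i{\big |}_{C^q\cap H^q}$, so shrinking $\rho$ further (still independently of $l$) to make $\rho C_2C\le\frac12$ yields the stated contraction with factor $\frac12$. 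Then $v^{\rho,l,0}_i+\sum_{m\ge1}\delta v^{\rho,l,m}_i$ converges in $C^0([l-1,l],C^q\cap H^q)$ to a limit $v^{\rho,l}_i$; passing to the limit in (\ref{vreprec}) shows it satisfies the fixed-point representation with its own quadratic terms, and since it is then a bounded H\"older-continuous coefficient of the associated fundamental-solution problem, parabolic (Schauder) regularity upgrades it to $v^{\rho,l}_i\in C^{1,q}$ and shows it solves (\ref{Navleray2a}), as announced in the remark before the theorem.

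The hardest step is the $C^q$ (sup-norm) estimate of the Leray projection term: the kernel $K_n$ (and $K_{n,i}$) is only weakly singular but decays too slowly for the convolution to close in $C^q$ on its own. One must use the splitting over $B^n_R(x)$ and its complement, exploit the extra $G_{,i}$ in the inner-ball piece — whose antisymmetry about $x$ (cf.\ (\ref{ulmrep*333int})) annihilates the non-integrable contribution — and invoke the $H^q$-decay of the data for the far field; and one must check that every constant $C_1,C_2$ produced along the way is genuinely independent of the time step number $l$, so that a single $\rho>0$ serves the whole global scheme. This bookkeeping of the mixed $C^q\cap H^q$ norm across the ball decomposition, together with the control of the low-regularity first increment $\delta v^{\rho,l,0}$, is where the bulk of the work lies.
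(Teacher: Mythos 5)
Your proposal runs the same Picard/Duhamel contraction as the paper's proof: each iterate $v^{\rho,l,m}_i$ is represented via the heat kernel with nonlinear terms frozen at level $m-1$; one spatial derivative lands on the kernel (you via an integration by parts producing $F_{ij}\star G_{,j}$, the paper via the multiindex split $\alpha=\beta+1_j$ with $p^l_{,j}$); the kernel $K_{n,i}$ is split over the unit ball and its complement; Young's inequality handles the convolutions; and the pointwise sup-bound on one quadratic factor (available because $C^q\cap H^q\subset C_b^q$) closes the $C^q\cap H^q$ estimate. The first-increment bound $\le\tfrac14$ and the contraction factor $\tfrac12$ then follow for small $\rho$, and the limit is verified to solve the time-local equation --- you via parabolic regularity, the paper by plugging the limit into (\ref{Navleray2a}) and showing the correction terms tend to zero. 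Two small caveats. First, the antisymmetry/cancellation of $G_{,j}$ to which you appeal for the Leray term (cf.\ (\ref{ulmrep*333int})) is not used in the paper's proof of this theorem; the paper applies Young's inequality twice (once for $1_{B_1(0)}K_{n,i}\in L^1$ and $1_{{\mathbb R}^n\setminus B_1(0)}K_{n,i}\in L^2$, once for $p^l$ or $p^l_{,j}$) together with the sup-bound, and that already suffices --- the cancellation argument belongs to the global subscheme analysis later in the paper, not to this local contraction. Second, your assertion that $\rho$ depends only on $C$ and $C_1$ and not on $l$ is not fully established by the first-increment step: $(e^{\rho\nu(\tau-(l-1))\Delta}-I)v^{\rho,l-1}_i(l-1,\cdot)$ tends to $0$ in $C^q\cap H^q$ as $\rho\downarrow 0$ for the given (fixed) datum, but not at a rate controlled by the norm bound $C$ alone (the heat semigroup converges to the identity only strongly). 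The paper acknowledges this explicitly in the remark following the theorem ("We have not determined the quantity of the time step size $\rho>0$ in the statement above...") and deliberately defers the uniformity question to the subsequent subscheme/localized-transformation arguments; the parenthetical "(depending on $C$ and $C_1$, not on $l$)" in your boundedness step should therefore be weakened to match what this theorem actually claims.
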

\begin{rem}
We have not determined the quantity of the time step size $\rho>0$ in the statement above because we use it only for analytical non-constructive versions of the argument. Note that for linear $C_l$ an estimate of the uncontrolled Navier Stokes equation leads contraction constant $\rho_l\sim \frac{1}{C_l^2}$. This does not lead to a finite scheme. However, we shall prove that there is no finite maximal $T>0$ such that for the velocity components of the Navier Stokes equation we have
\begin{equation}
\max_{1\leq i\leq n}{\big |}v_i(T_{max},.){\big |}_{H^q\cap C^q}\leq C+{T_{\max}}C
\end{equation}
for some $C>0$, or in the scheme we use that the equivalent claim that there is no maximal finite $\tau_{max}$ (and no finite maximal time step number $l$ corresponding to $\tau_{max}$) such that
\begin{equation}
\max_{1\leq i\leq n}{\big |}v^{\rho,l}_i(\tau_{max},.){\big |}_{H^q\cap C^q}\leq C+\tau_{max}C
\end{equation}
holds. For this we can argue by a contradiction argument. For this reason we considered the abstract existence of a $\rho$ in the statement of theorem (\ref{loccontr}) above.
Numerical schemes are much easier obtained for schemes with external control functions. The proof below, however, also leads to explicit upper bounds for the time step size if analyzed further. This is done in item (iii) below. Here we note that a refined analysis for a scheme with a global linear upper bound may lead to a contraction step size of order $\rho_l\sim \frac{1}{l}$ in general. Therefore an upper bound which is independent of the time horizon is a significant step forward from a numerical perspective. We shall use a localized form of the transformation in order to obtain such uniform global upper bounds below. Especially they lead to converging algorithmic scheme, which do not depend on the time step size. However, as a preservation of each type of upper bound is obtained by the subscheme with a time dilatation argument we can use even a simplified (weak) form of contraction result for analytical purposes of global existence. 
\end{rem}

\begin{proof}
In the following we provide a simplified proof of the simplified result stated where we have obtained stronger results elsewhere. The simplified argument is sufficient in order to get global upper bounds and a global regular existence result. We shall show below that the subscheme with the damping term will lead to the preservation of linear upper bounds.    
We denote multivariate derivatives with multiindices $\beta=(\beta_1,\cdots,\beta_n)$ of a function $f$ with respect to the spatial variables $x$ evaluated at $y$ by $D^{\beta}_xf(y)$. We assume recursively that at the beginning of time step $l\geq 1$ we have
\begin{equation}
{\big |}v^{\rho,l}_i(l-1,.){\big |}_{H^q\cap C^q}\leq C\equiv C_{l-1}.
\end{equation}
If we have a linear upper bound at the previous time step, then for some finite constant $c$ we have $C_{l-1}\leq c+(l-1)c$, and if we have a global uniform upper bound, then the constant $C>0$ is independent of the time step number $l\geq 1$. As we have a fixed upper bound constant $C$ at the beginning of time step $l$ we suppress the subscript $l$ anyway. The preservation of the linear upper bound or of the uniform global upper bound will be shown by a preservation of upper bounds by the subscheme and is not of further interest for the local result of this proof. 
For all $1\leq i\leq n$ and $0\leq |\alpha|\leq q$, and for $\alpha=\beta+1_j=(\beta_1,\cdots, \beta_j+1,\cdots ,\beta_n)$ for the functions $\delta D^{\alpha}_xv^{\rho,l,0}_i= D^{\alpha}_x\delta v^{\rho,l,0}_i=D^{\alpha}_xv^{\rho,l,0}_i(\tau,x)-D^{\alpha}_xv^{\rho,l,0}_i(l-1,x)$ have the representation
\begin{equation}\label{deltavrholrep}
\begin{array}{ll}
\delta D^{\alpha}_xv^{\rho,l,0}_i(\tau,x)=D^{\alpha}_xv^{\rho,l,0}_i(\tau,x)-D^{\alpha}_xv^{\rho,l-1}_i(l-1,x)\\
\\
=\int_{{\mathbb R}^n}D^{\alpha}_xv^{\rho,l-1}_i(l-1,y)p^l(\tau ,x-y)dy-D^{\alpha}_xv^{\rho,l-1}_i(l-1,x)\\
\\
-\rho\int_{l-1}^{\tau}\int_{\mathbb R}^n\sum_{j=1}^n D^{\beta}_x\left( v^{\rho,l-1}_j(l-1,y)\frac{\partial v^{\rho,l-1}_i}{\partial x_j}(l-1,y)\right)\times\\
\\
\times p^l_{,j}(\tau-s,x-y)dy ds\\
\\
+\rho \int_{l-1}^{\tau}\int_{\mathbb R}^n\sum_{j,p=1}^n\int_{{\mathbb R}^n}\left( \frac{\partial}{\partial x_i}K_n(z-y)\right) \times\\
\\
\sum_{j,p=1}^nD^{\beta}_x \left( \frac{\partial v^{\rho,l-1}_p(l-1,.)}{\partial x_j}\frac{\partial v^{\rho,l-1}_j(l-1,.)}{\partial x_p}\right) (s,y)p^l_{,j}(\tau-s,x-z)dydz ds,
\end{array}
\end{equation}
where we use the convolution rule, and where $p^l$ is the fundamental solution of
\begin{equation}
\frac{\partial v^{\rho,l,0}_i}{\partial \tau}-\rho\nu\sum_{j=1}^n \frac{\partial^2 v^{\rho,l,0}_i}{\partial x_j^2}=0
\end{equation}
on $[l-1,l]\times {\mathbb R}^n$ for all given $y\in {\mathbb R}^n$ (a family of Cauchy problems with $\delta_y(x)=\delta(x-y)$ Dirac delta distribution data). The natural choice of estimation is the standard Young inequality along with the splitting of integrals into integrals on a unit ball and its complement. First we observe that for the characteristic function $1_{B_1(0)}$ of the ball of radius $1$ around zero in ${\mathbb R}^n$ we have for $\tau>0$
\begin{equation}
{\big |}1_{B_1(0)}p_{,i}(\tau,.){\big |}_{L^1}\leq C_{pB_1}<\infty,
\end{equation}
for some constant $C_{pB_1}$ and
\begin{equation}
{\big |}1_{{\mathbb R}^n\setminus B_1(0)}p_{,i}(\tau,.){\big |}_{ L^2\cap L^1}\leq C_{pB^c_1} <\infty
\end{equation}
for some constant $C_{pB^c_1}$. Furthermore, let $C_p=C_{pB_1}+C_{pB^c_1}$.
Furthermore, concerning the Laplacian kernel in the Leray projection term we have for $n\geq 3$
\begin{equation}
1_{B_1(0)}K_{n,i}(.)\in L^1~\mbox{ and }~1_{{\mathbb R}^n\setminus B_1(0) }K_{n,i}(.)\in L^2,
\end{equation}
with the corresponding upper bounds $C_{KB_1}$ and $C_{KB^c_1}$. Recall that the subscript $,i$ denotes the spatial derivative of first order with respect to the $i$th variable. Furthermore, the characteristic function $1_{ {\mathbb R}^n\setminus B_1(0) }$ equals $1$ on the complement of the $n$-dimensional unit ball $B_1(0)$ around zero in ${\mathbb R}^n$, and equals zero elsewhere.  We use the natural abbreviation $C_K=C_{KB_1}+C_{KB^c_1}$.
Next we estimate the two nonlinear terms. Using Young's inequality for the convection term we have for fixed $\tau>0$ 
\begin{equation}\label{oneest}
\begin{array}{ll}
{\big |}\rho\int_{l-1}^{\tau}\int_{\mathbb R}^n\sum_{j=1}^n D^{\beta}_x\left( v^{\rho,l-1}_j(l-1,.)\frac{\partial v^{\rho,l-1}_i(l-1,y)}{\partial x_j}(l-1,y)\right)\times\\
\\
\times \left(1_{B_1(0)} p^l_{,j}(\tau-s,.-y)+ 1_{{\mathbb R}^n\setminus B_1(0)} p^l_{,j}(\tau-s,.
.-y)\right) dyds{\Big |}_{L^2}\\
\\
\leq \rho C_{pB_1}\int_{l-1}^{\tau}{\big |}\sum_{j=1}^n D^{\beta}_x\left( v^{\rho,l-1}_j(l-1,.)\frac{\partial v^{\rho,l-1}_i(l-1,.)}{\partial x_j}(l-1,.)\right){\big |}_{L^2}ds\\
\\
+\rho C_{pB^c_1}\int_{l-1}^{\tau}{\big |}\sum_{j=1}^n D^{\beta}_x\left( v^{\rho,l-1}_j(l-1,.)\frac{\partial v^{\rho,l-1}_i(l-1,.)}{\partial x_j}(l-1,.)\right){\big |}_{L^1}ds.
\end{array}
\end{equation}
Expanding the derivatives of order $|\beta|\leq q-1$ of the product we get at most $2^{q }$ terms. Using $ab\leq \frac{1}{2}a^2+\frac{1}{2}b^2$ we observe that the last summand on the right side of (\ref{oneest}) has the upper bound
\begin{equation}
\begin{array}{ll}
 \rho C_{pB^c_1}\int_{l-1}^{\tau}{\big |}n 2^{q}\max_{0\leq |\alpha|\leq q}\left( D^{\alpha}_xv^{\rho,l-1}_j(l-1,.)(l-1,.)\right)^2{\big |}_{L^1}ds\\
 \\\leq \rho C_{pB^c_1} 2^{q}nC^2 \mbox{ for }\tau\in [l-1,l].
 \end{array}
\end{equation}
For the first summand on the right side of (\ref{oneest}) we could use Fourier transforms in order to transform a convolution into a product and then use weighted $L^2$-estimates for products. We did that elsewhere. Here we remark that there is a simple alternative: since $D^{\alpha}_xv^{\rho,l-1}_j(l-1,.)\in C^q\cap H^q$ we have
$\sup_{x\in {\mathbb R}^n}
{\big |} D^{\alpha}_xv^{\rho,l-1}_j(l-1,x){\big |}\leq C'~\mbox{ for }~0\leq |\alpha|\leq q$ where we may call this $C'$ again $C$ for generic $C\geq 1$ by inductive assumption, and hence we have the upper bound
 \begin{equation}
\begin{array}{ll}
 \rho C_{pB_1}C\int_{l-1}^{\tau}{\big |}n 2^{q}\max_{0\leq |\alpha|\leq q}D^{\alpha} v^{\rho,l-1}_j(l-1,.)(l-1,.){\big |}_{L^2}ds\\
 \\\leq \rho C_{pB_1} 2^{q}nC^2 \mbox{ for }\tau\in [l-1,l].
 \end{array}
\end{equation}
Summing up, we have
\begin{equation}
\begin{array}{ll}
\sup_{\tau\in [l-1,l]}{\big |}\rho\int_{l-1}^{\tau}\int_{\mathbb R}^n\sum_{j=1}^n D^{\beta}_x\left( v^{\rho,l-1}_j(l-1,.)\frac{\partial v^{\rho,l-1}_i(l-1,y)}{\partial x_j}(l-1,y)\right)\times\\
\\
\times p^l_{,j}(\tau-s,.-y) dyds{\big |}_{L^2}\leq \rho C_{p} 2^{q}nC^2.
\end{array}
\end{equation}
A similar estimate holds for the Leray projection term with the two differences that we get an additional constant $C_K$ from the additional convolution involving a partial first order derivative of the Laplacian kernel $K_{n,i}$ (partial derivative with respect to $x_i$), and that we have $n^2$ terms.  
\begin{equation}
\begin{array}{ll}
 \rho \int_{l-1}^{\tau}{\big |}\int_{{\mathbb R}^n}\sum_{j,p=1}^n\int_{{\mathbb R}^n}\left( \frac{\partial}{\partial x_i}K_n(z-y)\right) \times \\
\\
\times D^{\beta}_x\left( \sum_{j,p=1}^n\left( \frac{\partial v^{\rho,l,l-1}_p(l-1,.)}{\partial x_j}\frac{\partial v^{\rho,l-1}_j(l-1,.)}{\partial x_p}\right)\right)  (s,y)p^l_{,j}(\tau-s,x-z)dydz ds{\big |}_{L^2}ds\\
 \\\leq \rho C_KC_{p} 2^{q}n^2C^2 \mbox{ for }\tau\in [l-1,l].
 \end{array}
\end{equation}
In order to have
\begin{equation}
{\big |}\delta v^{\rho,l,0}_i(\tau,.){\big |}_{C^q\cap H^q}\leq \frac{1}{4}
\end{equation}
from the estimate the last two summands via the representation of $\delta D^{\alpha}_xv^{\rho,l,0}_i(\tau,x)$ in (\ref{deltavrholrep}) we observe that the choice
\begin{equation}
\rho \leq \frac{1}{8C_KC_{p} 2^{q+1}n^{q+1}n^2C^2 }
\end{equation}
is sufficient in order to obtain a rough upper bound $\frac{1}{8}$, where we assume w.l.o.g. that $C_K\geq 1$, and where we are rough saying (among other things) that there are less than $n^{q+1}$ terms in the norm of order $q$ -note that we want a short notation of a constant which still bears some information.  
Next concerning the two linear terms on the right side of (\ref{deltavrholrep})  for some small $0<\rho \leq \frac{1}{8C_KC_{p} 2^{q+1}n^{q+1}n^2C^2}$ we know that
\begin{equation}
{\big |}\int_{{\mathbb R}^n}v^{\rho,l-1}_i(l-1,y)p^l(\tau ,x-y)dy-D^{\alpha}_xv^{\rho,l-1}_i(l-1,.){\big |}_{C^q\cap H^q}\leq \frac{1}{8}
\end{equation}
for all $\tau\in [l-1,l]$ such that the first approximation increment indeed satisfies ${\big |}\delta v^{\rho,l,0}_i(\tau,.){\big |}_{C^q\cap H^q}\leq \frac{1}{4}$ as stated in the theorem.
Next we prove the contraction result. First  note that for $m\geq 2$ the increment $\delta v^{\rho,l,m}_i$ satisfies 
\begin{equation}\label{Navleray6increment}
\left\lbrace \begin{array}{ll}
\frac{\partial \delta v^{\rho,l,m}_i}{\partial \sigma}-\rho\nu\sum_{j=1}^n \frac{\partial^2 \delta v^{\rho,l,m}_i}{\partial x_j^2}\\
\\
+\rho\sum_{j=1}^n \delta v^{\rho,l,m-1}_j\frac{\partial \delta v^{\rho,l,m-1}_i}{\partial x_j}+\rho\sum_{j=1}^n v^{\rho,l,m-2}_j\frac{\partial \delta v^{\rho,l,m-1}_i}{\partial x_j}\\
\\
=\rho \sum_{j,p=1}^n\int_{{\mathbb R}^n}\left( \frac{\partial}{\partial x_i}K_n(x-y)\right) \sum_{j,p=1}^n\left( \frac{\partial \delta v^{\rho,l,m-1}_p}{\partial x_j}\frac{\partial v^{\rho,l,m-1}_j}{\partial x_p}\right) (\tau,y)dy\\
\\
+ \rho\sum_{j,p=1}^n\int_{{\mathbb R}^n}\left( \frac{\partial}{\partial x_i}K_n(x-y)\right) \sum_{j,p=1}^n\left( \frac{\partial v^{\rho,l,m-2}_p}{\partial x_j}\frac{\partial \delta v^{\rho,l,m-1}_j}{\partial x_p}\right) (\tau,y)dy,\\
\\
\delta \mathbf{v}^{\rho,l,m}(l-1,.)=0.
\end{array}\right.
\end{equation}
We have to estimate convolutions of the fundamental solutions with the convection term increments and the Leray projection term increments in (\ref{Navleray6increment}). As an essential term we estimate a Leray projection term of the form
\begin{equation}\label{lerayterm}
\begin{array}{ll}
\rho \int_{l-1}^{\tau}\int_{{\mathbb R}^n}\sum_{j,p=1}^n\int_{{\mathbb R}^n}\left( \frac{\partial}{\partial x_i}K_n(z-y)\right) \sum_{j,p=1}^n\left( \frac{\partial \delta v^{\rho,l,m-1}_p}{\partial x_j}\frac{\partial v^{\rho,l,m-1}_j}{\partial x_p}\right) (s,z)\times \\
\\
\times p^l(\tau-s,x-z)dydzds
\end{array}
\end{equation}
We may use $p^l(\tau,.)\in L^1\cap L^2$ and $1_{B_1(0)}K_{,i}\in L^1$ and $1_{{\mathbb R}^n\setminus B_1(0)}K_{,i}\in L^2$ along with the characteristic function $1_{B_1(0) }$ of the unit ball around zero. Furthermore $p^l_{,j}\in L^1$ is useful if we consider the highest order of derivatives. Note also that for the first order spatial derivatives if the fundamental solution we have  $1_{B_1(0)}p^l_{,i}(\tau,.)\in L^1$ with corresponding norm ${\big |}1_{B_1(0)}p^l(\tau,.){\big |}_L^1=C_{pB_1}< \infty$. For the convolution (we use $\star$ as a symbol for the corresponding operation) with the first derivative of the Laplacian kernel we have for for multiindices $0\leq |\beta|\leq q-1$
\begin{equation}
\begin{array}{ll}
{\Big |}1_{B_1(0)}K_{n,i}\star\sum_{j,p=1}^nD^{\beta}_x\left( \frac{\partial \delta v^{\rho,l,m-1}_p}{\partial x_j}\frac{\partial v^{\rho,l,m-1}_j}{\partial x_p}\right)\leq {\Big | }_{L^2}\\
\\
\leq C_{KB_1}{\Big |}\sum_{j,p=1}^n D^{\beta}_x\left( \frac{\partial \delta v^{\rho,l,m-1}_p}{\partial x_j}\frac{\partial v^{\rho,l,m-1}_j}{\partial x_p}\right){\Big |}_{L^2}\leq C< \infty
\end{array}
\end{equation}
for some constant $C>0$ such that the cut-off integral summand with $1_{B_1(0)}K_{n,i}$ can be estimated with $p^l(\tau,.)\in L^1$ (up to order of regularity $q-1$) and $p^l_{,i}(\tau,.)\in L^1$ (for order of regularity $q$) and another application of the Young inequality. 
We may use standard product estimates in Sobolev norms in order to extract increments (we did that elswhere), but we have bounded suprema and may use a simpler estimate here).
Using $ab\leq \frac{1}{2}a^2+\frac{1}{2}b^2$, the suprema upper bound for $\sup_{x\in {\mathbb R}^n}{\big |}D^{\alpha} v^{\rho,l,m}_i(\tau,x){\big |}\leq C$ for $0\leq |\alpha|\leq q$  (which exist since $v^{\rho,l,m}_i(\tau,.)\in C^q\cap H^q$ implies $D^{\alpha} v^{\rho,l,m}_i(\tau,.)\in C_0$ for $0\leq |\alpha|\leq q$)
\begin{equation}
 {\Big |}D^{\beta}_x\left( \frac{\partial \delta v^{\rho,l,m-1}_p}{\partial x_j}\frac{\partial v^{\rho,l,m-1}_j}{\partial x_p}\right)(\tau,y){\Big |}\leq n^22^{q}C^2\max_{0\leq |\beta|\leq q}{\Big|}D^{\alpha}_x\delta v^{\rho,l,m-1}(\tau,y){\Big |}
\end{equation}
Similar estimates can be done outside $B_1(0)$ such that, as there are less than $\sum_{p=0}^{q} n^p\leq n^{q+1}$ terms in the definition of a $|.|_{C^q\cap H^q}$  for fixed $\tau \in [l-1,l]$ the $|.|_{C^q\cap H^q}$-norm of (\ref{lerayterm}) has the upper bound
\begin{equation}
\rho C_{p}C_K C^22^{q}n^{q+3}{\Big|}\delta v^{\rho,l,m-1}(\tau,.){\big |}_{C^q\cap H^{q}}
\end{equation}
Similar estimates for the other Leray projection term (estimated with increment index $q-1$) and the other two convection terms
lead to the conclusion that there is a $\rho$ with
\begin{equation}
\rho \frac{1}{8C_{p}C_K C^22^{q}n^{q+3}}
\end{equation}
we get a contraction constant $\frac{1}{2}$ inductively.

Finally we conclude that the limit $v^{\rho,l}_i=\lim_{m\uparrow \infty}v^{\rho,l,m}_i,~1\leq i\leq n$ is a local regular solution of the Navier Stokes equation in (\ref{Navleray2}). First note that for all $m\geq 1$ the function $v^{\rho,l,m}_i,~1\leq i\leq n$ satisfy the equation in (\ref{Navleray4}), where the initial data $v^{\rho,l,m}_i(l-1,.)=v^{\rho,l-1}_i(l-1,.)\in C^q\cap H^q$ for $q\geq 2$, and the coefficients $v^{\rho,l,m-1}_i(\tau,.),~1\leq i\leq n$ are in $C^q\cap H^q$ by inductive assumption uniformly for all time $\tau$. Furthemore, $v^{\rho,l,m-1}_i,~1\leq i\leq n$ has a continuous time derivative inductively. Moreover the source term of Leray projection form is a first order spatial derivative of the solution of a Poisson equation with source in $C^{q-1}\cap H^{q-1}$ and similar estimates as above involving splitting of the Laplacian kernel show that it is itself for fixed $\tau$ in $C^{q-1}\cap H^{q-1}$ uniformly with respect to $\tau\in [l-1,l]$. It follows by classical representation of the function $v^{\rho,l,m}_i,~1\leq i\leq n$ that  $v^{\rho,l,m}_i\in C^{1,2}$ for all $1\leq i\leq n$ for all $m\geq 1$, and, moreover, that $\tau \rightarrow \left(x\rightarrow v^{\rho,l,m}_i(\tau,x) \right)\in C^0\left([l-1,l],C^q\cap H^q \right)$ for all $1\leq i\leq n$ and $m\geq 1$ inductively. Plugging in these approximative solutions of linear equations into the original local equation (\ref{Navleray2}) and using (\ref{Navleray2}) we get
\begin{equation}\label{Navleraylocex2}
\begin{array}{ll}
\frac{\partial v^{\rho,l,m}_i}{\partial \tau}-\rho\nu\sum_{j=1}^n \frac{\partial^2 v^{\rho,l,m}_i}{\partial x_j^2}
+\rho\sum_{j=1}^n v^{\rho,l,m}_j\frac{\partial v^{\rho,l,m}_i}{\partial x_j}-\\
\\ \rho\sum_{j,p=1}^n\int_{{\mathbb R}^n}\left( \frac{\partial}{\partial x_i}K_n(x-y)\right) \sum_{j,p=1}^n\left( \frac{\partial v^{\rho,l,m}_p}{\partial x_j}\frac{\partial v^{\rho,l,m}_j}{\partial x_p}\right) (\tau,y)dy\\
\\
+\rho\sum_{j=1}^n v^{\rho,l,m}_j\frac{\partial \delta v^{\rho,l,m-1}_i}{\partial x_j}+\rho\sum_{j=1}^n \delta v^{\rho,l,m}_j\frac{\partial v^{\rho,l,m-1}_i}{\partial x_j}\\
\\+ \rho\sum_{j,p=1}^n\int_{{\mathbb R}^n}\left( \frac{\partial}{\partial x_i}K_n(x-y)\right) \sum_{j,p=1}^n\left( \frac{\partial v^{\rho,l,m}_p}{\partial x_j}\frac{\partial \delta v^{\rho,l,m}_j}{\partial x_p}\right) (\tau,y)dy\\
\\
+\rho\sum_{j,p=1}^n\int_{{\mathbb R}^n}\left( \frac{\partial}{\partial x_i}K_n(x-y)\right) \sum_{j,p=1}^n\left( \frac{\partial \delta v^{\rho,l,m}_p}{\partial x_j}\frac{\partial v^{\rho,l,m-1}_j}{\partial x_p}\right) (\tau,y)dy.
\end{array}
\end{equation}
From contraction we know that the increment ${\big |}\delta v^{\rho,l,m}_i(\tau,.) {\big |}_{C^q\cap H^q}\downarrow 0$ 
for $q\geq 2$ and all $\tau\in [l-1,l]$. We can use 
$\sup_{x\in {\mathbb R}^n}{\big |}D^{\alpha}_xv^{\rho,l,m}_i(\tau,x){\big |}\leq C$ for some generic constant $C>0$ independent of the time sub-step number $m$ (this independence follows from the contraction) to conclude
\begin{equation}
{\Big |} \rho\sum_{j,p=1}^n\int_{{\mathbb R}^n}\left( \frac{\partial}{\partial x_i}K_n(x-y)\right) \sum_{j,p=1}^n\left( \frac{\partial v^{\rho,l,m}_p}{\partial x_j}\frac{\partial \delta v^{\rho,l,m}_j}{\partial x_p}\right) (\tau,y)dy{\Big |}\downarrow 0
\end{equation}
as $m\uparrow \infty$ (which is simpler then the alternative standard estimates for Sobolev norms of products. For the Leray projection term we first estimate the convolution where we split again the first derivatives of the Laplacian kernel $K_{n,i}=1_{B_1(0)}K_{n,i}+1_{{\mathbb R}^n\setminus B_1(0)}K_{n,i}$ 'eliminating' the latter by two applications of Young's inequality and estimate the appearing upper bound similar as the convection term product. Hence we get
\begin{equation}
\begin{array}{ll}
\lim_{m\uparrow \infty}\rho{\Big |}\sum_{j,p=1}^n\int_{{\mathbb R}^n}\left( \frac{\partial}{\partial x_i}K_n(x-y)\right) \sum_{j,p=1}^n\left( \frac{\partial v^{\rho,l,m}_p}{\partial x_j}\frac{\partial \delta v^{\rho,l,m}_j}{\partial x_p}\right) (\tau,y)dy{\Big |}+\\
\\
\lim_{m\uparrow \infty}{\Big |}\rho\sum_{j,p=1}^n\int_{{\mathbb R}^n}\left( \frac{\partial}{\partial x_i}K_n(x-y)\right) \sum_{j,p=1}^n\left( \frac{\partial \delta v^{\rho,l,m}_p}{\partial x_j}\frac{\partial v^{\rho,l,m-1}_j}{\partial x_p}\right) (\tau,y)dy{\Big |}\\
\\
\downarrow 0 \mbox{ as }~m\uparrow \infty,
\end{array}
\end{equation}
and $v^{\rho,l}_i,~1\leq i\leq n$ is indeed a local regular solution as stated.
\end{proof}

The local information obtained for $v^{\rho,l}_i,~1\leq i\leq n,$ is inherited by all functions $u^{\rho,l,m}_i,~1\leq i\leq n$ for all $m\geq 1$ simultaneously. For this reason we do not have to analyze the regularity of the latter functions. On the other hand, it is clear that local contraction results hold a fortiori for the functions $u^{\rho,l}_i,~1\leq i\leq n$, and for the functions $u^{\rho,l,m}_i,~1\leq i\leq n$, as we have an additional damping term. The only purpose of the latter functions is to estimate the growth of the component functions $v^{\rho,l}_i$ for all $1\leq i\leq n$ at one time step $l$. The growth control is inherited for all time steps $l\geq 1$ and this leads to global upper bounds. If we determine the time step size via the local contraction result theorem \ref{loccontr} of the original equation (proving only the existence of a linear regular upper bound), then it is clear that the scheme cannot be finite. Nevertheless, even this leads to a global regular existence proof on an analytical level - although not to a numerical scheme. For a numerical scheme we may work with the comparison function $u^{\rho,l}_i,~1\leq i\leq n$ for a localized transformation as in (\ref{uvl0}) with appropriate parameter $\lambda$. We come back to this issue.  From (\ref{uvtrans}) we have
\begin{equation}\label{uv00}
u^{\rho,l,m}_i(\sigma,x)=\frac{v^{\rho,l}_i(\tau(\sigma) ,x)}{1+\tau(\sigma)}~\mbox{ for }\sigma\in [m-1,m].
\end{equation}
Now assume that at time step $l\geq 1$ we have a certain time step size $\rho>0$ such that the contraction result above holds for $v^{\rho,l}_i,~1\leq i\leq n$ on the next unit interval in transformed time coordinates $\tau$. If we construct a global linear upper bound, then we expect an appropriate time step size to be of order $\rho_l\sim \frac{1}{l}$ for the scheme $u^{\rho,l,m}_i,~1\leq i\leq n,~l,m\geq 1$ on the global time scale (original time $t$), and for the alternative transformation in (\ref{alt}) we expect that this time step size can be chosen uniformly, i.e., independently of the time step number $l$ (for the appropriate choices of $\lambda$ and $\rho$; these two choices are not independent, cf. below). However it will be shown below that even the weak (rough) local contraction result above can be applied anyway. 
Next we observe that if for a time step number $l\geq 1$ for a given order of regularity $q\geq 2$ and time step size $\rho>0$ the contraction result in theorem \ref{loccontr} holds (and note again that this means that we work on an analytical level, not on a constructive level then), and we have for all $1\leq i\leq n$, and all substeps $m\geq 1$ the preservation of an upper bound in the sense that for all $0\leq |\alpha|\leq q$ the implication
\begin{equation}
{\big |}u^{\rho,l,m-1}_i(m-1,.){\big |}_{C^q\cap H^q}\leq C\Rightarrow {\big |}u^{\rho,l,m}_i(m,.){\big |}_{C^q\cap H^q}\leq C
\end{equation}
holds, then this implies that
\begin{equation}\label{uv00}
{\big |}v^{\rho,l}_i(l ,.){\big |}_{C^q\cap H^q}\leq  \left(1+l \right) \sup_{m\in {\mathbb N}}{\big |}u^{\rho,l,m}_i(m,.){\big |}_{C^q\cap H^q}\leq (1+l)C.
\end{equation}
This conclusion is immediate from the relation of the schemes. At this point it may be helpful to remark that we do not have any minimal stepsize $\rho>0$ which would turn the description into a global scheme. It is essentially an argument by contradiction: if we have a maximal time $t_{max}$ with
\begin{equation}\label{uv01}
{\big |}v_i(t_{max} ,.){\big |}_{C^q\cap H^q}\leq  \left(1+t_{max} \right) C,
\end{equation}
for some $q\geq 2$ in original coordinates then we find via the subscheme described a $t'>t_{\max}$ such that
\begin{equation}\label{uv01}
{\big |}v_i(t' ,.){\big |}_{C^q\cap H^q}\leq  \left(1+t_{max} \right) C,
\end{equation}
still holds, and this is a contradiction to the maximality of $t_{max}$. Therefore, we always refer to the scheme of the comparison function $u^{\rho,l}_i$ if we talk about step sizes for numerical schemes.
For the localized scheme with the prescription in (\ref{uvl0}) along with appropriate choices of the parameter $\lambda>0$ and the time step size $\rho$ we can even show that an upper bound $C>0$ is preserved which is independent of the time step size, i.e., 
\begin{equation}\label{uv001}
{\big |}v^{\rho,l}_i(l ,.){\big |}_{C^q\cap H^q}\leq   \sup_{m\in {\mathbb N}}{\big |}u^{\rho,l,m}_i(m,.){\big |}_{C^q\cap H^q}\leq C
\end{equation}
holds.
As (\ref{uv00}) holds for all time step numbers $l\geq 1$ we can conclude by contradiction of the maximality of time where a linear upper bound may hold (using the scheme in (\ref{alt})) that we have a global linear upper bound for the value function (where we discuss the matter of time step size in correlation to the time step number $l\geq 1$ below). 
The knowledge of the local solution $v^{\rho,l}_i,~1\leq i\leq n$ implies that we know the solution functions $u^{\rho,l,m}_i,~1\leq i\leq n$ at time step $l\geq 1$ for all substeps $m\geq 1$ simultaneously, i.e., we do not have to set up a subsequence of iteration schemes to construct them. The following considerations concerning the fundamental solution hold for the scheme based on the transformation (\ref{uvtrans}) and for the scheme with a global uniform upper bound based on the transformation (\ref{uvl0}) as well. They also hold for (\ref{alt}). In order to cover all cases we introduce the notation
\begin{equation}
\mu'=\mu^{\tau,1},
\end{equation}
keeping in mind that with a similar $\mu'$ related to transformations (\ref{uvl0}) and (\ref{alt}) analogous considerations concerning the fundamental solutions hold. 

We can represent these solutions in terms of fundamental solutions $p^{l,m}$ of the equation
\begin{equation}\label{plm}
\frac{\partial p^{l,m}}{\partial \sigma}-\rho\nu \mu' \Delta p^{l,m}=0,
\end{equation}
where the index $m$ reminds us that $p^{l,m}$ is defined on the domain $[m-1,m]\times {\mathbb R}^n$ (for each parameter $y\in {\mathbb R}^n$ and $s\in (m-1,m]$ with $s<\sigma$). As $\rho$ is a constant time step size and $\mu'$ depends only on time $\sigma$, these fundamental solutions $p^{l,m}$ have a special Levy expansion. We may fix the parameter $s>0$ first  
and denote the lowest order Gaussian approximation of the Levy expansion by $G_{\mu'(s)}$, where
\begin{equation}
G_{\mu'(s)}(\sigma,x;s,y)=\frac{1}{\sqrt{4\pi \rho\nu\mu'(s)(\sigma-s)}^n}\exp\left(-\frac{|x-y|^2}{4\rho\nu\mu'(s)(\sigma-s)} \right) 
\end{equation}
for all $x,y\in {\mathbb R}^n$ and $s<\sigma$ of the corresponding domain. Again we know where we are from the indices of the local value function $u^{\rho,l,m}_i,~1\leq i\leq n$, and do not equip this Gaussian and the higher order terms with superscript indices. The classical Levy expansion of $p^{l,m}$ is then given by
\begin{equation}\label{fund}
p^{l,m}(\sigma,x;s,y)=G_{\mu'(s)}(\sigma,x;s,y)+\int_{s}^{\sigma}G_{\mu'(s)}(\sigma,x;\zeta,z)\phi(\zeta ,z;s,y)dzd\zeta,
\end{equation}
where
\begin{equation}\label{phi}
\phi(\sigma ,x;s,y)=\sum_{p\geq 1}\rho^p(L_{\mu'}G_{\mu'(s)})_p(\sigma ,x;s,y),
\end{equation}
along with the recursion
\begin{equation}
L_{\mu'}G_{\mu'(s)}(\sigma ,x;s,y)=(L_{\mu'}G_{\mu'(s)})_1(\sigma ,x;s,y)=\left(\mu'(\sigma)-\mu'(s) \right)\Delta G_{\mu'(s)}(\sigma ,x;s,y),
\end{equation}
and
\begin{equation}
\begin{array}{ll}
(L_{\mu'}G_{\mu'(s)})_{p+1}(\sigma ,x;s,y)=\\
\\
 \int_{s}^{\sigma}\int_{{\mathbb R}^n}\left(\mu'(\sigma)-\mu'(s) \right)\Delta G_{\mu'(s)}(\sigma ,x;\zeta,z)(LG_{\mu'(s)})_{p+1}(\zeta ,z;s,y).
\end{array}
\end{equation}
Note that we have extracted the powers of $\rho$ from the linear operator and added a subscript $\mu'$ at $L$ to indicate this. Since $0<\rho< 1$ (we may even have $0<\rho \ll 1$) the representation of the fundamental solution $p^{l,m}$ in (\ref{fund}) and (\ref{phi}) shows that the Levy expansion converges if $(L_{\mu'}G_{\mu'(s)})_p$ have a uniform upper bound in form of a Gaussian times a constant. This is more then expected since the Laplacian of the Gaussian is involved, but $\mu'$ is only time dependent and Lipschitz and this facilitates the estimate. Consider $L_{\mu'}$. 
We compute
\begin{equation}\label{lmu}
\begin{array}{ll}
L_{\mu'(s)}(\sigma ,x;s,y)=\left(\mu'(\sigma)-\mu'(s) \right)\Delta G_{\mu'(s)}(\sigma ,x;s,y)\\
\\
=\left(\mu'(\sigma)-\mu'(s) \right)\Delta \frac{1}{\sqrt{4\pi \rho\mu'(s)(\sigma-s)}^n}\exp\left(-\frac{|x-y|^2}{4\rho\mu'(s)(\sigma-s)} \right) \\
\\
=\left(\mu'(\sigma)-\mu'(s) \right)\sum_j\left[ \left(\frac{2(x_j-y_j)}{4\mu'(s)(\sigma-s)} \right)^2-  \frac{2}{4\mu'(s)(\sigma-s)} \right] \frac{1}{\sqrt{4\pi \rho\mu'(s)(\sigma-s)}^n}\times\\
\\
\times \exp\left(-\frac{|x-y|^2}{4\rho\mu'(s)(\sigma-s)} \right).
\end{array}
\end{equation}
Since we have $|\mu'(\sigma)-\mu'(s)|\leq c|\sigma-s|$ from (\ref{lmu}) we get
\begin{equation}\label{lmu2}
\begin{array}{ll}
{\big |}L_{\mu'(s)}G_{\mu'(s)}(\sigma ,x;s,y){\big |}\leq \left( {\big |} \sum_j\frac{4c(x_j-y_j)^2}{16\mu'(s)(\sigma-s)}{\big |} + \frac{c}{2\mu'(s)} \right)\times\\
\\
 \times \frac{1}{\sqrt{4\pi\rho \mu'(s)(\sigma-s)}^n}\exp\left(-\frac{|x-y|^2}{4\rho \mu'(s)(\sigma-s)} \right).
\end{array}
\end{equation}
Writing 
\begin{equation}
\begin{array}{ll}
\frac{1}{\sqrt{4\pi \rho\mu'(s)(\sigma-s)}^n}\exp\left(-\frac{|x-y|^2}{4\rho\mu'(s)(\sigma-s)} \right)\\
\\
 =\exp\left(-\frac{|x-y|^2}{8\mu'(s)(\sigma-s)} \right) \frac{\sqrt{2}}{\sqrt{8\pi \rho\mu'(s)(\sigma-s)}^n}\exp\left(-\frac{|x-y|^2}{8\rho\mu'(s)(\sigma-s)} \right),
\end{array}
\end{equation}
and with
\begin{equation}
C^0:=\sup_{z\in {\mathbb R}^n}2|z|^2\exp\left(-z^2 \right)
\end{equation}
we get
\begin{equation}\label{lmu2}
\begin{array}{ll}
{\big |}L_{\mu'(s)}G_{\mu'(s)}(\sigma ,x;s,y){\big |}\leq \left(C^0  + \frac{c}{2\mu'(s)} \right) \frac{\sqrt{2}}{\sqrt{8\pi \rho\mu'(s)(\sigma-s)}^n}\exp\left(-\frac{|x-y|^2}{8\rho\mu'(s)(\sigma-s)} \right).
\end{array}
\end{equation}
We are rough with the second term in the bracket of the right side but since we are concerned with the local analysis at finite time at this moment this is o.k.. Furthermore, the solution to a possible problem at infinite time may not be solvable by a better estimate. Using a Taylor expansion of $\mu(\sigma)$ we get a slight improvement of a constant factor
\begin{equation}\label{betterconst}
\left(C^1  + \frac{c}{2\mu'(s)^{\frac{2}{3}}} \right) \mbox{ instead of }\left(C^0 + \frac{c}{2(\mu'(s))} \right) 
\end{equation} 
with another constant $C^1$, but this is still singular at infinity. So if we have to deal with infinite time in transformed coordinates (which are always finite times in original coordinates), then we should be careful. Do we have to? Well, not really.
Consider the equation for $u^{\rho,l,m}_i,~1\leq i\leq n$ above, which we know at a given time step $l\geq 1$ since we have constructed the local solution $v^{\rho,l}_i,¸1\leq i\leq n$. As we know this function at time step $l$ we have the representation

\begin{equation}\label{ulmrep}
 \begin{array}{ll}
 u^{\rho,l,m}_i(\sigma,x)-\int_{{\mathbb R}^n}u^{\rho,l,m}_i(m-1,y)p^{l,m}(\sigma,x;m-1,y)dy=\\
 \\
-\rho\int_{m-1}^{\sigma}\int_{{\mathbb R}^n}\mu^{\tau,2}(s)\sum_{j=1}^n u^{\rho,l,m}_j\frac{\partial u^{\rho,l,m}_i}{\partial x_j}(s,y)p^{l,m}(\sigma,x;s,y)dyds\\
\\
-\int_{m-1}^{\sigma}\int_{{\mathbb R}^n}\mu(s) u^{\rho,l,m}_i(s,y)p^{l,m}(\sigma,x;s,y)dyds\\
\\ 
+\rho \int_{m-1}^{\sigma}\int_{{\mathbb R}^n} \mu^{\tau,2}(s)\sum_{j,p=1}^n\int_{{\mathbb R}^n}\left( \frac{\partial}{\partial x_i}K_n(z-y)\right)\times\\
\\
\times \sum_{j,p=1}^n\left( \frac{\partial u^{\rho,l,m}_p}{\partial x_j}\frac{\partial u^{\rho,l,m}_j}{\partial x_p}\right)(s,y)p^{l,m}(\sigma,x;s,z)dy dz ds.
\end{array}
\end{equation} 
All the terms on the right side of (\ref{ulmrep}) have a factor $\mu(s)$ (which occurs also in $\mu^{\tau,2}$), which cancels with the estimate in (\ref{lmu2}) and a fortiori with the improved constant in (\ref{betterconst}). Hence there is indeed no problem as $m\uparrow \infty$ as we get uniform upper bounds for all substep numbers $m\geq 1$ via the representation in (\ref{ulmrep}). The second term on the left side of (\ref{ulmrep}) has to be compared with the initial data. At first glance it looks as this difference is H\"{o}lder in time for some H\"{o}lder constant between zero and one, but we shall observe that the behavior is indeed better (although even a worse H\"{o}lder growth would suffice for our purposes, as it suffices to observe the growth at discrete times).  

As we study the subscheme having constructed the local solution of the incompressible Navier Stokes equation $v^{\rho,l}_i,"1\leq i\leq n$ at time step $l\geq 1$ we use the relation 
 \begin{equation}\label{uvrel2}
 \frac{1}{1+\tau(\sigma)}v^{\rho,l}_i(\tau(\sigma) ,x)=u^{\rho,l,m}(\sigma,x)
 \end{equation}
 for all $\sigma \in [m-1,m]$ and where $\tau(\sigma)$ is the inverse of $\sigma(\tau)$ in (\ref{timedil}). As we have already remarked, the reason to consider the functional series $u^{\rho,l,m}_i,~1\leq i\leq n$ at all is to observe that the growth of $v^{\rho,l}_i,~1\leq i\leq n,~l\geq 1$ can be controlled appropriately at each time step such that we get a linear global upper bound for $v^{\rho}_i,~1\leq i\leq n$ in the end. 
For the localized scheme on we shall see that we even get an uniform upper bound. We also shall consider variations of (\ref{uvrel2}) and prove global linear upper bounds and global upper bounds with growth of order $\mu >0$ directly.
 Note that from (\ref{uvrel2}) we have for all $l\geq 1$, all $m\geq 1$, and all $x\in {\mathbb R}^n$
\begin{equation}\label{uvrelloc1}
\frac{1}{1+\tau(m-1)}v^{\rho,l}_i(\tau(m-1) ,x)=u^{\rho,l,m}(m-1,x)=u^{\rho,l,m-1}(m-1,x)
\end{equation}
for the initial data of the time-local subscheme problem for $u^{\rho,l,m}_i,~1\leq i\leq n$ and 
\begin{equation}\label{uvrelloc1}
\frac{1}{1+\tau(\sigma)}v^{\rho,l}_i(\tau(\sigma) ,x)=u^{\rho,l,m}(\sigma,x)\mbox{ for } \sigma\in [m-1,m],
\end{equation} 
where we know that $\tau(\sigma)\in [0,1]$ for all $\sigma \in [m-1,m]$ and all $m\geq 1$. Now assume that for $l\geq 1$ we have  
\begin{equation}
v^{\rho,l-1}_i(l-1,.)\in C^q\cap H^q,~\mbox{where } {\big |}v^{\rho,l-1}_i(l-1,.){\big |}_{C^q\cap H^q}\leq C
\end{equation}
for some $C>0$. 
From $\sup_{\tau\in [l-1,l]}{\big |}D^{\alpha}_x\delta v^{r,l,0}_i(\tau,.){\big |}_{C^q\cap H^q}\leq \frac{1}{4}$ for all $0\leq |\alpha|\leq q$ and from the local contraction result theorem \ref{loccontr} we know that for small $\rho>0$ (time step size is discussed at the end of this paper)
\begin{equation}
u^{\rho,l,m}_i(\sigma,.)\in C^q\cap H^q, \mbox{ where }{\big |}u^{\rho,l,m}_i(\sigma,.){\big |}_{C^q\cap H^q}\leq C+\frac{1}{2}
\end{equation}
for all $\sigma \in [m-1,m]$ uniformly for all $m\geq 1$. We conclude that $u^{\rho,l,m}_i,~1\leq i\leq n$ solves (\ref{Navleray3}) with the same argument which we used in order to obtain local regular existence for $v^{\rho,l}_i,~1\leq i\leq n$ from the local contraction result.
Now we are able to get
\begin{itemize}
 \item[i)] 
 a proof of preservation of a global upper bound for
  $$\sup_{m\geq 1}{\big |}u^{\rho,l,m}_i(m,.){\big |}_{C^{q}\cap H^{q}}$$ using the subscheme $u^{\rho,l,m,p}_i$ directly; since we use a global time delay transformation (a factor of the form $\lambda (1+\tau)$), we need to prove that the step size $\rho$ is essentially independent of the time horizon $T>0$ of the problem. We are explicit about this in this update.
 \item[ii)]  a proof of preservation of a global upper bound of the subscheme which leads to a global upper bound for $$\sup_{m\geq 1}{\big |}u^{\rho,l,m}_i(m,.){\big |}_{C^{q}\cap H^{q}}$$  using the local adjoint of a fundamental solution. The adjoint is useful in order to generalise the result to equations with variable viscosity.
 \item[iii)] a constructive scheme with a localized version of the comparison function $u^{*,\rho,t_0}_i,~1\leq i\leq n$, which satisfies for all $1\leq i\leq n$
\begin{equation}
\lambda(1+(\tau-t_0))u^{*,\rho,t_0}_i=v^{\rho,t_0}_i, 
\end{equation} 
and which leads to a uniform upper bound
\begin{equation}
\sup_{t<\infty}{\big |}v_i(t,.){\big |}_{H^m\cap C^m}\leq C
\end{equation}
for a constant $C>0$ which is independent of time. Moreover, this constructive version of a global existence scheme leads to the result stated in Corollary \ref{maincor}.
\end{itemize}
We note that items i) and ii) above lead to growth estimates of $v^{\rho,l}_i,~1\leq i\leq n$ of course.
We discussed local adjoints and their application elsewhere, and consider only the possibilities i) and iii) in the following. Only a short remark is made concerning item ii). Furthermore, note that all these strategies have to be supplemented by different possible (non-constructive and constructive) strategies which are applied in order choose the time step size an which will be considered below. 

Ad i) We set $\lambda =1$ at this item. We may use classical representations of local solutions for comparison functions $u^{\rho,l,m}_i$ with transformed fundamental solutions, where we can use the estimates for the fundamental solution above since the term in these estimates with $\frac{1}{\mu(s)}$ (or $\frac{1}{\mu(s)^{\frac{2}{3}}}$) is cancelled by the coefficient $\mu(s)$ from the source terms - note that $\mu(s)$ occurs in $\mu^{\tau,1}$ and $\mu^{\tau,2}$ and is the relevant factor here) . Classical representations of the solutions $u^{\rho,l,m}_i$ using  first order spatial derivatives of the fundamental solutions for estimates of derivatives up to order $q$ lead to uniform preservation of upper bounds independent of the substep number $m$.
More precisely, for each step numbers $m$ and $\beta+1_j=\alpha$ along with $1\leq |\alpha|\leq q$ we consider the representation
\begin{equation}\label{ulmrep*}
 \begin{array}{ll}
 D^{\alpha}_xu^{\rho,l,m}_i(\sigma,x)-\int_{{\mathbb R}^n}D^{\alpha}_xu^{\rho,l,m}_i(m-1,y)p^{l,m}(\sigma,x;m-1,y)dy=\\
 \\
-\rho\int_{m-1}^{\sigma}\int_{{\mathbb R}^n}\sum_{j=1}^n D^{\beta}_x\left( u^{\rho,l,m}_j\frac{\partial u^{\rho,l,m}_i}{\partial x_j}\right) (s,y)\mu^{\tau,2}(s)p^{l,m}_{,j}(\sigma,x;s,y)dyds\\
\\
-\int_{m-1}^{\sigma}\int_{{\mathbb R}^n} D^{\alpha}_xu^{\rho,l,m}_i(s,y)\mu(s)p^{l,m}(\sigma,x;s,y)dyds\\
\\ 
+\rho \int_{m-1}^{\sigma}\int_{{\mathbb R}^n} \sum_{j,p=1}^n\int_{{\mathbb R}^n}\left( \frac{\partial}{\partial x_i}K_n(z-y)\right)\times\\
\\D^{\beta}_x\left( \frac{\partial u^{\rho,l,m}_p}{\partial x_j}\frac{\partial u^{\rho,l,m}_j}{\partial x_p}\right)(s,y)\mu^{\tau,2}(s)p^{l,m}_{,j}(\sigma,x;s,z)dy dz ds
\end{array}
\end{equation} 
on $[m-1,m]\times {\mathbb R}^n$, and where we put the coefficients of the source terms $\mu^{\tau,k}(s)$ and $\mu$ next to $p^{l,m}_{,i}$. For $\alpha=0$ we can use a similar representation with first order derivatives of the fundamental solution, i.e., with $p^{l,m}_{,i}$ by using partial integration of the nonlinear Burgers term and spatial local integration of the Laplacian kernel for the Leray projection term. Note that for all $0\leq |\alpha|\leq q$ for the potential damping term in (\ref{dampu}) we may use
the fundamental solution $p^{l,m}$, where no first order derivative of the fundamental solution is used.
We mention that the transformation with the global factor $(1+\tau)$ in equation (\ref{uvtrans}) has the effect that the factors $\mu^{\tau,2}$ and $\mu^{\tau,1}$ become large in the coefficients, but for a fixed time horizon $T>0$ of the original Navier Stokes equation problem for the velocity components $v_i$ and small time step size $\rho$ the damping term of the comparison function turns out to be dominant. We shall also show below that the growth increments of the nonlinear terms have regular upper bounds which are small relative to the potential damping term due to the spatial derivative of the Gaussian in the local solution representations used. This also leads to the conclusion that there is no essential dependence of the time step size $\rho >0$ on the time horizon $T>0$.  Note that such a global transformation as it is used in this item i) is not appropriate in order to obtain numerical schemes especially for longer time. However, we can still obtain a global scheme. Here, the time variable occurring in $\mu^{\tau,k}$ for $0\leq k\leq 2$ is always local of the form $\tau_l=(\tau-l-1)$ where $\tau\in [l-1,l]$ such that the size of the factors $\mu^{\tau,k}$ does not depend on the time step number $l$ essentially. Especially, we have regular coefficients and we have uniform upper bounds for the coefficients $\mu^{\tau,k}$, which is essential.

Note that in the damping term 
\begin{equation}\label{dampu}
-\int_{m-1}^{\sigma}\int_{{\mathbb R}^n} D^{\alpha}_xu^{\rho,l,m}_i(s,y)\mu(s)p^{l,m}(\sigma,x;s,y)dyds
\end{equation}
obtained form the time dilatation transformation we apply the full derivative $D^{\alpha}_x$ to $u^{\rho,l,m}_i$ which we can do because we know that $u^{\rho,l,m}_i\in C^q\cap H^q$. This has the advantage that we convolute the source term $-u^{\rho,l,m}_i\mu(s)$ with the positive fundamental solution $p^{l,m}$ itself. Hence for small time step size $\rho>0$ this term with $\sigma=m$ becomes close to $D^{\alpha}_xu^{\rho,l,m}_i(m-1,y)\mu(m-1)$ in the sense that for any $\epsilon >0$ there is a size $\rho >0$ such that (as $m-(m-1)=1$) we have 
\begin{equation}\label{diff}
\begin{array}{ll}
{\big |}-\int_{m-1}^{m}\int_{{\mathbb R}^n} D^{\alpha}_xu^{\rho,l,m}_i(s,y)\mu(s)p^{l,m}(m,x;s,y)dyds\\
\\
+D^{\alpha}_xu^{\rho,l,m}_i(m-1,y)\mu(m-1){\big |}\leq {\epsilon}.
\end{array}
\end{equation}
Now consider the value $D^{\alpha}_iu^{\rho,l,m}(m-1,x)$ for some $1\leq i\leq n$ and some multiindex $\alpha$ with $0\leq |\alpha|\leq q$. If this value becomes close to $C$, let's say greater or equal to $C-1$ for some $C\gg 1$, then the damping term is close to to $C-1-\epsilon$ for small $\rho >0$ or arbitrarily close to $C-1$ as $\rho>0$ becomes small). On the other hand all other terms on the right side of (\ref{ulmrep*}) have the factor $\rho>0$ while the damping term has not. Perhaps we need to add a remark here concerning the left side of (\ref{ulmrep*}) which may be written as
\begin{equation}
\begin{array}{ll}
D^{\alpha}_xu^{\rho,l,m}_i(\sigma,x)-D^{\alpha}_xu^{\rho,l,m}_i(m-1,x)\\
\\
+D^{\alpha}_xu^{\rho,l,m}_i(m-1,x)-\int_{{\mathbb R}^n}D^{\alpha}_xu^{\rho,l,m}_i(m-1,y)p^{l,m}(\sigma,x;m-1,y)dy.
\end{array}
\end{equation}
There may be some concern that the growth term
\begin{equation}\label{leftgrowth}
D^{\alpha}_xu^{\rho,l,m}_i(m-1,x)-\int_{{\mathbb R}^n}D^{\alpha}_xu^{\rho,l,m}_i(m-1,y)p^{l,m}(\sigma,x;m-1,y)dy
\end{equation}
can dominate the damping term for small time.
\begin{rem}
Note that the second term in (\ref{ulmrep*}) can be put with a positive sign on the right sign such that the value $D^{\alpha}_xu^{\rho,l,m}_i(\sigma,x)$ is represented by the sum of 'its diffusion'
\begin{equation}\label{viscdamp}
\int_{{\mathbb R}^n}D^{\alpha}_xu^{\rho,l,m}_i(m-1,y)p^{l,m}(\sigma,x;m-1,y)dy
\end{equation}
plus some nonlinear terms and a potential damping term with negative sign. Actually, the  term in (\ref{viscdamp}) is a viscosity damping term. First not that the term in (\ref{viscdamp}) is a convolution with respect to the spatial variables. Using Fourier tansforms ${\cal F}$ with respect to the spatial variables convolutions transform to multiplications an the term in (\ref{viscdamp}) becomes
\begin{equation}\label{fourier}
{\cal F}\left( D^{\alpha}_xu^{\rho,t_0}_i\right)(\sigma,\xi)={\cal F}\left( D^{\alpha}_xu^{\rho,t_0}_i\right)(0,\xi)\exp\left(-\rho\nu \mu'|\xi|^2(\sigma-t_0)\right).
\end{equation}
We shall use this viscosity damping below for the construction of upper bounds which are independent of the time horizon in (iii).
\end{rem}
 
However, this does not really matter for our purposes here in item i) as we need a preservation of the norm only for discrete times. For this purpose we have to observe that the density in (\ref{leftgrowth}) integrates to $1$ such that the size of (\ref{leftgrowth}) becomes small as $\rho$ becomes small. And a closer analysis shows that the damping term even dominates for small time. 
Let is point out this a little. 
The right term in (\ref{leftgrowth}) may be splitted into two summands, and both summands estimated with standard upper bounds of the local and global Gaussian respectively. Given $x\in {\mathbb R}^n$ for a ball $B^n_{\epsilon}(x)$ of radius $\epsilon$ around $x$ we have
\begin{equation}
\begin{array}{ll}
\int_{{\mathbb R}^n}D^{\alpha}_xu^{\rho,l,m}_i(m-1,y)p^{l,m}(\sigma,x;m-1,y)dy\\
\\
=\int_{B^n_{\epsilon}(x)}D^{\alpha}_xu^{\rho,l,m}_i(m-1,y)p^{l,m}(\sigma,x;m-1,y)dy\\
\\
+\int_{{\mathbb R}^n\setminus B^n_{\epsilon}(x)}D^{\alpha}_xu^{\rho,l,m}_i(m-1,y)p^{l,m}(\sigma,x;m-1,y)dy\\
\\
\leq \int_{0<r\leq \epsilon}
{\big |}D^{\alpha}_xu^{\rho,l,m}_i(m-1,y(r)){\big |}\frac{\tilde{C}}{(\sigma-m-1)^{\delta}r^{n-2\delta}}r^{n-1}dr\\
\\
+\int_{{\mathbb R}^n\setminus B^n_{\epsilon}(x)}{\big |}D^{\alpha}_xu^{\rho,l,m}_i(m-1,y){\big |}\frac{\tilde{C}}{\sqrt{\rho(\sigma-(m-1))}^n}\exp\left(-\lambda_0 \frac{(x-y)^2}{\rho (\sigma-(m-1))}\right) dy, 
\end{array}
\end{equation}
for some generic constant $\tilde{C}>0$ and some $\lambda_0 >0$ which may be chosen independently of time step numbers and $\rho$, and where we introduced polar coordinates with radial component $r>0$. The convolutions may be estimated using a Young inequality (concerning upper bounds with respect to $H^m$), where the local upper bound of the Gaussian may be estimated with respect to a $L^1$-norm. This leads to
\begin{equation}
{\Big |}\frac{\tilde{C}}{(\sigma-m-1)^{\delta}r^{n-2\delta}}r^{n-1}{\Big |}_{L^1(r\leq {\epsilon})}\leq \frac{\tilde{C}\epsilon^{2\delta}}{(\sigma-m-1)^{\delta}}.
\end{equation}
As the damping term has
\begin{equation}
-D^{\alpha}_xu^{\rho,l,m}_i(m-1,y)\mu(m-1)(\sigma-(m-1))
\end{equation}
for small $\rho>0$ and $\sigma\in [m-1,m]$ we would like to choose 
\begin{equation}
\frac{\epsilon^{2\delta}}{(\sigma-m-1)^{\delta}}\in o(\sigma-(m-1))
\end{equation}
as $\sigma$ goes to $m-1$ while the complementary integral 
\begin{equation}\label{complement}
\int_{{\mathbb R}^n\setminus B^n_{\epsilon}(x)}{\big |}D^{\alpha}_xu^{\rho,l,m}_i(m-1,y){\big |}\frac{\tilde{C}}{\sqrt{\rho(\sigma-(m-1))}^n}\exp\left(-\lambda_0 \frac{(x-y)^2}{\rho (\sigma-(m-1))}\right) dy
\end{equation}
goes to zero for this choice of $\epsilon$. As a continuous square integrable function is bounded and in order to obtain an upper bound we may replace ${\big |}D^{\alpha}_xu^{\rho,l,m}_i(m-1,y){\big |}$ 
in the latter integral by a constant  get for the  choice
\begin{equation}
\epsilon =(\sigma-(m-1))^{\frac{1-\delta}{2\delta}}~\mbox{for}~\delta\in (0,0.5)
\end{equation}
for a constant $C^*$ and another constant $C_0$ we get the upper bound
\begin{equation}\label{complement2}
\begin{array}{ll}
\int_{r\geq \epsilon}\frac{C^*\tilde{C}}{\sqrt{\rho(\sigma-(m-1))}^n}\exp\left(-\lambda_0 \frac{r^2}{\rho (\sigma-(m-1))}\right)r^{n-1} dr\\
\\
\leq \int_{r\geq \epsilon}\frac{C_0C^*\tilde{C}}{\sqrt{\rho(\sigma-(m-1))}^n}\exp\left(-\frac{\lambda_0}{2} \frac{r^2}{\rho (\sigma-(m-1))}\right) dr,
\end{array}
\end{equation}
where the upper bound of the relevant factor of integrand at $r=\epsilon=\frac{1-\delta}{\delta}$ for the right side of (\ref{complement2}) we get the upper bound
\begin{equation}
\frac{2\rho}{\lambda_0}\frac{C_0C^*\tilde{C}}{\sqrt{\rho(\sigma-(m-1))}^{n+\frac{1}{2}{\Big (}\frac{\delta}{1-\delta}-1 {\Big )}}}\exp\left(-\frac{\lambda_0}{2} \frac{(\sigma-(m-1))^{\frac{\delta}{1-\delta}-1}}{\rho}\right)
\end{equation}
which goes to zero for $\delta \in (0,0.5)$ as $\sigma\downarrow (m-1)$.

It follows that we have a preservation of the upper bound $C$, i.e., for $m\geq 1$ and a $\rho$ (but dependent on $m$ for the transformation in (\ref{uvtrans}) with a global factor $(1+\tau)$) we have  
\begin{equation}\label{subschemem}
\sup_{z\in {\mathbb R}^n}{\big |}D^{\alpha}_xu^{\rho,l,m-1}_i(m-1,z){\Big |}\leq C \Rightarrow \sup_{z\in {\mathbb R}^n}{\big |}D^{\alpha}_xu^{\rho,l,m}_i(m,z){\Big |}\leq C
\end{equation}
first for some finite substep numbers $m\geq 1$. For the localized relation in (\ref{uvl0}) we have (\ref{subschemem}) for all substep numbers at once. For certain localized versions of the form (\ref{uvl0}) we can prove stronger upper bounds as we shall observe below. If we use a global factor $(1+\tau)$ as in (\ref{uvtrans}), then we need have to take care of the coefficients $\mu^{\tau,2}$. Note that $\mu$ has a factor $\frac{1}{1+\tau}$ such that the coefficient factor $\mu^{\tau,1}$ of the Laplacian is bounded and $\mu^{\tau,2}$ is linear in $\tau$. This is not really a problem. We can go time steps with respect to $\tau$ which are smaller than one such that the subscheme needs to be considered only for a finite time horizon. For example we could proceed with time steps of size $\left[l-1,l-\frac{1}{2}\right]$ with respect to $\tau$ which leads to time step size $\frac{1}{\sqrt{3}}$ with respect to $\sigma$ (remarkable that the price for a strong damping term is so small in the context of numerical intentions). If we consider the relation in (\ref{uvtrans}) with the global factor on an infinite interval then a first idea is to offset linear growth of the coefficient factors $\mu^{\tau,2}$ by using a subscheme with variable time step size $\rho_m\sim \frac{1}{m}$ (which is still global). However this seems to be not sufficient as the the damping term decreases with the factor $\frac{1}{1+\tau}$ for (\ref{uvtrans}). However the preservation claim in (\ref{subschemem}) is still valid as may be argued by contradiction again. We loose constructiveness, but there is no harm on the analytical level (if we trust classical mathematics). So any approach is possible from the point of view of classical mathematics. However, in order to satisfy the 'constructivist' we consider a constructive version of the proof below.

For the localized transformation in (\ref{uvl0}) an analogous argument leads to the preservation of an upper bound with respect to a $C^q\cap H^q$-norm uniformly in time, where $\rho$ is independent of the time step number $m\geq 1$ of the subscheme. For the transformation in (\ref{uvtrans}) we need to care for the situation where $\tau$ becomes large such that the factors $\mu^{\tau,k}:~k=0,1,2$ become large. However, this is also not a real problem. For the global factor transformation (\ref{uvtrans}) we can set up a scheme $u^{\rho_m,l,m-1}_i,~1\leq i\leq n$ with substep sizes $\rho_m$ which vary with $m$. We may have $\sum_{m\geq 1}\rho_m=c<\infty$ for \ref{subschemem}, but then we may argue by contradiction that we can continue. In a set-theoretical-oriented style you would set up a transfinite scheme. This is nothing extraordinary, and less 'metaphysically oriented' classical mathematicians can always succeed here using a contradiction argument. Let us add some more remarks.
For $p^{l,m}$ itself determined by (\ref{plm}) in the Levy expansion form above we observe that we get a Gaussian upper bound for $p^{l,m}$ with a dispersion constant which is smaller then the original dispersion constant. Furthermore, for the first order derivatives $p^{l,m}_{,j}$ we get upper bounds of the form 
\begin{equation}
\begin{array}{ll}
{\Big |}\frac{(x-y)_j}{4\rho\mu'(s)(\sigma-s)} \frac{1}{\sqrt{4\pi \rho\mu'(s)(\sigma-s)}^n}\exp\left(-\frac{|x-y|^2}{8\rho\mu'(s)(\sigma-s)} \right){\Big |}\\
\\
\leq {\big |}\frac{c}{(x-y)_j}{\big |} \frac{1}{\sqrt{4\pi \rho\mu'(s)(\sigma-s)}^n}\exp\left(-\frac{|x-y|^2}{16\rho\mu'(s)(\sigma-s)} \right),
\end{array}
\end{equation}
where $\mu'=\mu^{\tau,1}$ or a similar function if we use a transformation as in (\ref{alt}), and where
\begin{equation}
c:=\sup_{y\in {\mathbb R}^n}z\exp(-z^2).
\end{equation}
The local singularity for the Gaussian gets an additional factor $\frac{1}{(x_j-y_j)}$, but this leads to local upper bounds of the form
\begin{equation}
\frac{C}{(\sigma-m-1)|x-y|^{n+1-2\alpha}}
\end{equation}
for some $\alpha\in (0.5.1)$ which is clearly integrable for the considered dimensions $n\geq 3$ and we can then estimate as in the local contraction result above. Outside a ball we have exponential damping as $\rho>0$ becomes small.
All the terms with spatial derivatives in the equations for $u^{\rho,l,m}_i,~1\leq i\leq n$ have a factor $\rho$ which makes them small compared to the damping term introduced via the time dilatation transformation and an additional (linear) factor. As this source term is the only term of the representation in (\ref{ulmrep*}) which has no small factor $\rho >0$ and as all the other terms on the right side have this factor. 

Next we prove that the step size $\rho >0$ can be chosen essentially independently of the time horizon $T>0$ even in this case of a global time factor in the time delay transformation which we consider in this item i). 
First note that in (\ref{ulmrep*}) the nonlinear terms are of the form 
\begin{equation}\label{ulmrep*222}
 \begin{array}{ll}
-\rho\int_{m-1}^{\sigma}\int_{{\mathbb R}^n}f_i(s,y)p^{l,m}_{,j}(\sigma,x;s,y)dyds
\end{array}
\end{equation} 
on $[m-1,m]\times {\mathbb R}^n$, where
\begin{equation}
\begin{array}{ll}
f_i\in M_f:={\Big \{} \sum_{j=1}^n D^{\beta}_x\left( u^{\rho,l,m}_j\frac{\partial u^{\rho,l,m}_i}{\partial x_j}\right) (.,.)\mu^{\tau,2}(.),\\
\\
\sum_{j,p=1}^n\int_{{\mathbb R}^n}\left( \frac{\partial}{\partial x_i}K_n(.-y)\right)D^{\beta}_x\left( \frac{\partial u^{\rho,l,m}_p}{\partial x_j}\frac{\partial u^{\rho,l,m}_j}{\partial x_p}\right)(.,y)\mu^{\tau,2}(.)dy{\Big \}}.
\end{array}
\end{equation}
Note that we have
\begin{equation}
\sup_{\sigma\in [m-1,m],x\in {\mathbb R}^n, f_i\in M_f}{\big |}f_i(\sigma,x){\big |}\leq C_f
\end{equation}
for some finite constant $C_f>0$.
Rough pointwise upper bounds are of the form
\begin{equation}\label{pointupper}
\begin{array}{ll}
\rho\int_{m-1}^{\sigma}\int_{{\mathbb R}^n}\frac{\tilde{C}_0}{\sqrt{\rho(\sigma-(m-1))}^n}\frac{{\big |}f_i(s,y)(x_j-y_j){\big |}}{\rho (\sigma-(m-1))}\exp\left(-\lambda_0 \frac{(x-y)^2}{\rho (\sigma-(m-1))}\right)dyds
\end{array}
\end{equation}
for some constant $\tilde{C}_0>0$ which depends only on the dimension and the ellipticity constant $\nu$ (or, respectively, on  $\lambda_0$). We can use such rough upper bounds for the integral up to time $\sigma >m-1$ over the complement a ball $B_{\rho^r}(x)$ of radius $\rho^r$ around $x$ in ${\mathbb R}^n$ for $r\in \left(\frac{1}{3},\frac{1}{2} \right)$. We have
\begin{equation}\label{pointupper1}
\begin{array}{ll}
\rho\int_{m-1}^{\sigma}\int_{{\mathbb R}^n\setminus B_{\rho^r}(x)}\frac{\tilde{C}_0}{\sqrt{\rho(\sigma-(m-1))}^n}\frac{{\big |}f_i(s,y)(x_j-y_j){\big |}}{\rho (\sigma-(m-1))}\exp\left(-\lambda_0 \frac{(x-y)^2}{\rho (\sigma-(m-1))}\right)dyds\\
\\
\leq \rho^{1-r}\int_{m-1}^{\sigma}\frac{C_1}{\sqrt{\rho(\sigma-(m-1))}^n}C_f\exp\left(-0.5 \lambda_0 \frac{1}{ \rho^{1-2r}(\sigma-(m-1))}\right)dy\\
\\
\leq \rho^{1-r}C_2C_f (\sigma-(m-1))
\end{array}
\end{equation}
for some constants $C_1,C_2>0$ which depend only on dimension $n$ and the ellipticity constant $\nu >0$ (resp. $\lambda_0$). This estimate can be strengthened: we have
\begin{equation}\label{pointupper1*}
\begin{array}{ll}
\rho\int_{m-1}^{\sigma}\int_{{\mathbb R}^n\setminus B_{\rho^r}(x)}\frac{\tilde{C}_0}{\sqrt{\rho(\sigma-(m-1))}^n}\frac{{\big |}f_i(s,y)(x_j-y_j){\big |}}{\rho (\sigma-(m-1))}\exp\left(-\lambda_0 \frac{(x-y)^2}{\rho (\sigma-(m-1))}\right)dyds\\
\\
\leq \rho^{1-r}\int_{m-1}^{\sigma}\frac{C_1(\rho^{1-2r}(\sigma-(m-1)))^k(\sigma-(m-1))^{\frac{n}{2(1-2r)}-\frac{n}{2}}}{\sqrt{\rho^{1-2r}(\sigma-(m-1))}^{\frac{n}{1-2r}+2k}}C_f\exp\left(-0.5 \lambda_0 \frac{1}{ \rho^{1-2r}(\sigma-(m-1))}\right)dy\\
\\
\leq (\rho^{1-2r})^{2k+1}\tilde{C}_2C_f (\sigma-(m-1))^{k+1},
\end{array}
\end{equation}
where the new constant $\tilde{C}_2$ still depends only on the dimension $n$.

For the complementary integral on the ball $B_{\rho^r}(x)$ we use a finer estimate. We have
\begin{equation}\label{ulmrep*333}
 \begin{array}{ll}
{\Big |}\rho\int_{m-1}^{\sigma}\int_{B_{\rho^r}(x)}f_i(s,y)p^{l,m}_{,j}(\sigma,x;s,y)dyds{\Big |}\\
\\
={\Big |}\rho\int_{m-1}^{\sigma}\int_{B_{\rho^r}(x)}f_i(s,y)\frac{(x_j-y_j)}{\sigma -s}p^{l,m,*}(\sigma,x;s,y)dyds{\Big |},
\end{array}
\end{equation} 
where
\begin{equation}
p^{l,m,*}(\sigma,x;s,y)=\frac{1}{4\rho\nu\mu'(s)}\left( G_{\mu'(s)}(\sigma,x;s,y)+\int_{s}^{\sigma}G_{\mu'(s)}(\sigma,x;\zeta,z)\phi(\zeta ,z;s,y)dzd\zeta\right) 
\end{equation}
Here we take the derivative with respect to the spatial variable $xj$ of
\begin{equation}
G_{\mu'(s)}(\sigma,x;s,y)=\frac{1}{\sqrt{4\pi \rho\nu\mu'(s)(\sigma-s)}^n}\exp\left(-\frac{|x-y|^2}{4\rho\nu\mu'(s)(\sigma-s)} \right) 
\end{equation}
as the leading term of all orders of the classical Levy expansion 
\begin{equation}\label{fund}
p^{l,m}(\sigma,x;s,y)=G_{\mu'(s)}(\sigma,x;s,y)+\int_{s}^{\sigma}G_{\mu'(s)}(\sigma,x;\zeta,z)\phi(\zeta ,z;s,y)dzd\zeta,
\end{equation}
where the higher order term
\begin{equation}\label{phi}
\phi(\sigma ,x;s,y)=\sum_{p\geq 1}\rho^p(L_{\mu'}G_{\mu'(s)})_p(\sigma ,x;s,y),
\end{equation}
have higher order powers of $\rho$ (cf. the text above for the recursive definition of $(L_{\mu'}G_{\mu'(s)})_p$ which is also standard). From that definition it is clear that we have a antisymmetry in (\ref{ulmrep*333}), i.e.,
\begin{equation}\label{ulmrep*333}
 \begin{array}{ll}
{\Big |}\rho\int_{m-1}^{\sigma}\int_{B_{\rho^r}(x)}f_i(s,y)\frac{(x_j-y_j)}{\sigma -s}p^{l,m,*}(\sigma,x;s,y)dyds{\Big |}=\\
\\
{\Big |}\rho\int_{m-1}^{\sigma}\int_{B^{x_j\geq y_j}_{\rho^r}(x)}{\big (}f_i(s,y)-f_i(s,y^{-j}){\big )}\frac{(x_j-y_j)}{\sigma -s}p^{l,m,*}(\sigma,x;s,y)dyds{\Big |},
\end{array}
\end{equation} 
where in the last term we integrate over a half-sphere 
\begin{equation}
B^{x_j\geq y_j}_{\rho^r}(x)=\left\lbrace y\in B_{\rho^r}(x)|x_j\geq y_j\right\rbrace 
\end{equation}
and 
$y^{-j}=\left(y^{-j}_1,\cdots ,y^{-j}_n\right)$ is the vector with $y^{-j}=(-1)^{\delta_{ij}}$ if $y=(y_1,\cdots ,y_n)$ and $\delta_{ij}$ is the Kronecker $\delta$. 
For each $1\leq j\leq n$ we use continuity of the 'data' $f_i$. Especially for each $\epsilon  >0$ there exists a $\rho >0$ such that for ${|}x-y{|}\leq \rho^r$ we have 
\begin{equation}
{\big |}f_i(s,x_1,\cdots ,x_j,\cdots ,x_n)-f_i(s,x_1,\cdots ,-x_j,\cdots ,x_n){\big |}\leq \epsilon.
\end{equation}
Furthermore, for the first order spatial derivatives of the fundamental solution $p^{l,m}$ we get the standard estimate
\begin{equation}
\begin{array}{ll}
{\Big |}\rho\int_{m-1}^{\sigma}\int_{B_{\rho^r}(x)}\frac{(x_j-y_j)}{\sigma -s}p^{l,m,*}(\sigma,x;s,y)dyds{\Big |}\leq \rho^{1+\delta_0} C_4(\sigma -(m-1)),
\end{array}
\end{equation}
where $C_4$ is a constant which depends only on the ellipticity constant $\lambda_0$ and the dimension and $\delta_0$ is a small positive constant (dimension-free). Here we use 
\begin{equation}
{\Big |}G_{\mu'(s),j}(\sigma,x;s,y){\Big |}=\frac{\frac{|x_j-y_j|}{4\rho\nu\mu'(s)(\sigma-s)}}{\sqrt{4\pi \rho\nu\mu'(s)(\sigma-s)}^n}{\Bigg |}\exp\left(-\frac{|x-y|^2}{4\rho\nu\mu'(s)(\sigma-s)} \right){\Bigg |} 
\end{equation}
for the leading term of the classical Levy expansion of $p^{l,m}_{,j}$, where the higher order terms have similar estimates multiplied by powers of $\rho$, and the factor $\frac{1}{1-\rho}$ can be absorbed by $C_4$ for small $\rho$.
Hence,
\begin{equation}\label{epsilonest}
\begin{array}{ll} 
{\Big |}\rho\int_{m-1}^{\sigma}\int_{B^{x_j\geq y_j}_{\rho^r}(x)}{\big (}f_i(s,y)-f_i(s,y^{-j}){\big )}\frac{(x_j-y_j)}{\sigma -s}p^{l,m,*}(\sigma,x;s,y)dyds{\Big |}\\
\\
\leq \rho^{1+\delta0} C_4(\sigma -(m-1))\epsilon .
\end{array}
\end{equation} 
For arbitrary small $\epsilon >0$ we find a small $\rho >0$ such that the latter relation holds, and for this $\rho$ we still have (\ref{pointupper1*})! Note that for the global time transformation of this item i) we have potential damping term of order $\frac{1}{T}$ with respect to the time horizon. Now the estimate in (\ref{epsilonest}) and in (\ref{pointupper1*}) for $k>0$ ensure that we can choose
\begin{equation}
\rho \gtrsim\frac{1}{T^s} \mbox{ for some }s\in \left(r,1\right) 
\end{equation}
and this ensures that the global regular existence result for $v^{\rho}_i,~1\leq i\leq n$ via a scheme $u^{\rho,l,m}_i,~1\leq i\leq n,~l,m\geq 1$ leads to a global regular existence result for the original velocity component functions $v_i,~1\leq i\leq n$.

Now, via the time dilatation argument we can argue that the functions $v^{\rho,l}_i(l,.)$ and their multivariate spatial derivatives up to order $q$ inherit a linear upper bound such that we get a global linear upper bound altogether. The argument is as follows. The relation in (\ref{ulmrep*}) suggests that we may consider different norms or magnitude for preservation arguments. This is true indeed. For example we could assume that for time step number $l\geq 1$ and some constant $C>0$ we have established a global upper bound
\begin{equation}
\max_{1\leq i\leq n,~0\leq |\alpha|\leq p}\sup_{y\in {\mathbb R}^n}{\big |}D^{\alpha}_x v^{\rho,l-1}_i(l-1,y){\big |}\leq C(1+(l-1)),
\end{equation}
and then use the relation (\ref{ulmrep*}) in order to prove preservation in with respect to this norm. What is needed is just a local contraction result. We have considered this only for $H^q\cap C^q$ but others are certainly possible. As we are flexible with respect to the norm let us not loose this. Let us keep the norm unspecified and just write $|.|$. As we consider the derivatives explicitly the reader may have in mind
\begin{equation}
|.|=|.|_{L^2}~~~~\mbox{or}~~~~|.|=\sup_{y\in {\mathbb R}^n}|.|.
\end{equation}
Let us consider 
\begin{equation}
\max_{1\leq i\leq n,~0\leq |\alpha|\leq p}{\big |}D^{\alpha}_x v^{\rho,l-1}_i(l-1,.){\big |}\leq C(1+(l-1)).
\end{equation}
Recall that for $\tau \in [l-1,l]$ we have
\begin{equation}\label{vu}
v^{\rho,l}_i(\tau,x)=(1+\tau)u^{\rho,l}_i(\sigma,x),~\mbox{ where }(\sigma,x)=\left(\frac{\tau}{\sqrt{1-\tau^2}},x\right). 
\end{equation}
This means that for $l\geq 1$ we have
\begin{equation}
\begin{array}{ll}
\frac{\max_{1\leq i\leq n,~0\leq |\alpha|\leq p}{\big |}D^{\alpha}_x v^{\rho,l-1}_i(l-1,.){\big |}}{ (1+(l-1))}\\
\\
=\max_{1\leq i\leq n,~0\leq |\alpha|\leq p}{\big |}D^{\alpha}_x u^{\rho,l-1}_i(l-1,.){\big |}\leq C.
\end{array}
\end{equation}
As a local classical existence result holds for $v^{\rho,l}_i,~1\leq i\leq n$ on $[l-1,l]\times {\mathbb R}^n$ we have classical existence for the function $u^{\rho,l}_i,~1\leq i\leq n$ as well, where $u^{\rho,l}_i(\sigma,.)\in H^q\cap C^q$ for all $\sigma \geq 0$, and local $t$-time coordinates transfer to global $\sigma$-time coordinates. However in order to estimate the growth at each time step we do a local analysis of a transformed global function $u^{\rho,l}_i,~1\leq i\leq n$ known to exist (as we know $v^{\rho,l}_i,~1\leq i\leq n$ and due to (\ref{vu})). 
Hence, the local analysis of a globally transformed local function $u^{\rho,l}_i,~1\leq i\leq n$ is done in order to estimate the growth of $u^{\rho,l}_i$ for all $1\leq i\leq n$, on the whole domain $[0,\infty)\times {\mathbb R}^n$, and therefore the growth of $v^{\rho,l}_i$ from time $l-1$ to time $l$ for all $1\leq i \leq n$ via the relation (\ref{vu}). This is done for each $l\geq 1$ by recursive local analysis of the functional series $u^{\rho,l,m}_i,~m\geq 0$ for all $1\leq i\leq n$, where it is shown that for all time step $l$ upper bounds are inherited for each substep $m$, i.e., that     
\begin{equation}\label{upperboundinherit}
{\big |}D^{\alpha}_x u^{\rho,l-1,m-1}_i(l-1,y){\big |}\leq C\Rightarrow {\big |}D^{\alpha}_x u^{\rho,l-1,m}_i(l,y){\big |}\leq C
\end{equation}
for all $y\in {\mathbb R}^n$ and $m\geq 1$ at a time step $l\geq 1$,
as observed above.

At each time step $l$ we have a step size $\rho >0$ such that the contraction result holds on the time interval $[l-1,l-1+\rho]$, and the subscheme then guarantees that we have linear growth on this time interval of the original value function $v^{\rho,l}_i$. This type of argument leads to global existence but has a non-constructive aspect. However, note that from the perspective of classical mathematics even this weak form of knowledge about contraction leads to global existence by a contradiction argument. We start with this weak form of argument (as in a certain respect it is the simplest form of argument), and then we shall provide stronger constructive arguments below. Concerning the weak classical argument we first observe that from (\ref{upperboundinherit}) we have for all $1\leq i\leq n$ and $0\leq |\alpha|\leq q$ that
\begin{equation}
{\big |}D^{\alpha}_x u^{\rho,l-1}_i(l-1,y){\big |}\leq C\Rightarrow {\big |}D^{\alpha}_x u^{\rho,l}_i(l,y){\big |}\leq C
\end{equation}
for all time steps and all $y\in {\mathbb R}^n$. We conclude that
\begin{equation}
{\big |}D^{\alpha}_x v^{\rho,l}_i(l,y){\big |}=(1+l){\big |}D^{\alpha}_x u^{\rho,l}_i(l,y){\big |}\leq C(1+l)
\end{equation}
for all $y\in {\mathbb R}^n$. It is clear that the latter conclusion leads to
\begin{equation}
{\big |}D^{\alpha}_x v^{\rho,l}_i(l,y){\big |}\leq C'(1+\tau)
\end{equation}
Hence, from the local contraction results and the inheritance of a global upper bound by he subscheme we have (with a generic adaption of $C$, i.e. with $C$ multiplied by a factor which counts the number of terms in the norm) we have for example
\begin{equation}
{\big |}v^{\rho,l}_i(l,.){\big |}_{C^q\cap H^q}=(1+l){\big |}u^{\rho,l}_i(l,y){\big |}_{C^q\cap H^q}\leq C(1+l)
\end{equation}
for some $q\geq 2$ (generic $C>0$). It is clear that this implies
\begin{equation}
{\big |}v^{\rho,l}_i(\tau,.){\big |}_{C^q\cap H^q}=(1+l){\big |}u^{\rho,l}_i(l,y){\big |}_{C^q\cap H^q}\leq C(1+\tau)
\end{equation}
for all $l\geq 1$ and $\tau\in [l-1,l]$ (we suppress a local index of $\tau$ all the time for convenience). This is a description for a global linear upper bound for the original velocity component functions $v_i$ in $C^0\left([0,T],H^q\cap C^q \right)$. There is no maximal finite $T>0$ by simple contradiction arguments. 
It remains the matter of time step size. At each time step $l\geq 1$ we have to choose the time step size $\rho>0$ such that the contraction result holds. If we consider the contraction result for the function $v^{\rho,l}_i,~1\leq i\leq n$ or for a related function $v^l_i,~1\leq i\leq n$ in original time coordinates, then the argument becomes nonconstructive. Note that in the contraction results for functions $v^{\rho,l}_i,~1\leq i\leq n$ the time step size for contraction depends on the size of the data, i.e., the final data of the previous time step. Our rough estimate leads to a time step size which is reciprocal proportional to the square of the upper bound of this data size. Hence if we have a linear bound for the global velocity function $v^{\rho,l}_i$ or $v^{l}_i$, then the sum of infinitely many time step sizes may be finite. For this reason this simple argument has a non-constructive aspect. However it is clear that for the function $u^{\rho,l}_i$ the initial data are always constant $C>0$ (independent of the time step number $l\geq 1$), such that the $\rho >0$ dependent on $C>0$ can be chosen uniformly, and the scheme becomes constructive for any finite time horizon $T>0$, because a choice of the time step size $\rho \gtrsim \frac{1}{T^s}$ for $s\in \left(\frac{2}{3},1\right)$ is sufficient as we have shown via (\ref{epsilonest}) and in (\ref{pointupper1*}). Note that even nonconstructive schemes are allowed in classical mathematics, i.e., in mathematics with axiomatic systems like Zermelo-Fr\"{a}nkel or Bernays-G\"{o}del, but we have to show that it leads to a global result. It is for this reason that we consider alternative versions in this paper, which are more constructive in order to convince the constructivist that our argument holds in a more restrictive sense.  
As we have mentioned, the weak classical form of argument can be closed by the remark that the assumption that the scheme leads to a bounded solution for finite time only leads to a contradiction: assume that for all time steps $l\geq 1$ a time step size $\rho_l>0$ has been chosen such that a contraction result for $v^{\rho,l}_i,~1\leq i\leq n$ is valid on the domain $[\sum_{m=1}^{l-1}\rho_m,\sum_{m=1}^{l}\rho_m]\times {\mathbb R}^n$. The contraction result above shows that for all integer time step numbers $l\geq 1$ such a time step size $\rho_l$ can be chosen such that a) a local contraction result holds on this domain, and b) if $v^{\rho,l-1}_i,~1\leq i\leq n$ is the solution of the Navier Stokes equation at time $l-1$ (determined by the initial data $h_i,~1\leq i\leq n$ via $l-1$ time steps of the main recursive scheme for $v^{\rho,l}_i,~1\leq i\leq n$ on the original time level), then $(t,x)\rightarrow v^{\rho,l}_i(t,x),~ 1\leq i\leq n$ is a solution on the time interval $[\sum_{m=1}^{l-1}\rho_m,\sum_{m=1}^{l}\rho_m]\times {\mathbb R}^n$. Furthermore, the subscheme time-dilatation argument shows that the the original value function $v^{\rho,l}_i(\tau,.),~ 1\leq i\leq n$ has a linear upper bound $C+C\tau$ for all time step numbers $l\geq 1$ and for a constant $C>0$ which is independent of time. Now if the limit $\sum_{l= 1}^{\infty}\rho_l=T<\infty$, then we have
\begin{equation}
(1+T)D^{\alpha}_xu^{\rho,l}_i(T,.)=D^{\alpha}_xv^{\rho,l}_i(T,.)\leq C(1+T)
\end{equation}
for $0\leq |\alpha|\leq q$ and where $C>0$ is a preserved upper bound of all subschemes $u^{\rho,l,m}_i,~1\leq i\leq n$ which is independent of time. As we have the upper bound $(1+T)C$ of the value function and its derivatives for for $0\leq |\alpha|\leq q$ we have a time step size $\rho>0$ such that the local contraction result holds on some time interval $[T,T+\rho]$, a contradiction to the assumption that the $T>0$ can be a maximal time where the velocity solution is bounded. This leads to global regular existence with regularity of order $q\geq 2$ for any given $q\geq 2$. We note that even  this weak and not completely constructive form of the scheme still leads to global existence in the framework of classical mathematics by transfinite induction with respect to time, i.e., by transfinite ordinal number induction. Or- if you have no intimacy with set theory, you may call it an argument by contradiction. As we said the scheme becomes constructive for any finite time horizon if we consider the contraction for the comparison function $u^{\rho,l}_i,~1\leq i\leq n$.

Concerning item ii) we only remark that both estimates in item i) and item iii) can also be obtained by using the information of $v^{\rho,l}_i$ in order to get a representation for $u^{\rho,l,m}_i,~1\leq i\leq m$ where the convection term is treated as a first order coefficient term (known by the knowledge of $v^{\rho,l}_i,~1\leq i\leq n$)  and where the Leray projection term is the only source term in a representation with a corresponding fundamental solution which includes the knowledge of the first order convection terms. We may  then use the adjoint of the fundamental solution in order to estimate higher order spatial derivatives.

Ad iii) the considerations so far lead to us to the conclusion that the possibility of a constructive auto-control scheme  is related to the inheritance of a uniform upper bound for the original velocity components $v_i,~1\leq i\leq n$ from time $t_0$ to time $t_0+\rho$ which is related in turn to the inheritance of a uniform upper bound of a comparison function $u^{*,\rho,t_0}_i, 1\leq i\leq n$ for a related to time interval, where we add a $*$ upperscript to denote the name of the latter comparison function as it is a localized variation of the comparison functions $u^{\rho,t_0}_i, 1\leq i\leq n$ considered earlier. Here we argued that in item i) above that even for the global time transformation used there the time step size $\rho$ may be chosen only weakly dependent of the the time horizon, i.e., in the form $\rho \gtrsim \frac{1}{T^s}$ for some appropriate positive $s<1$. Now we sharpen the results. On a subinterval of the (original time $t$) time interval $[t_0,t_0+\rho)$ we consider the comparison function $u^{\rho,t_0}_i,~1\leq i\leq n$, i.e., on the subinterval of the unit time interval $\tau\in [t_0,t_0+1)$ we consider the comparison function $u^{*,\rho,t_0}_i,~1\leq i\leq n$ defined by
\begin{equation}\label{*eqloc}
\lambda(1+(\tau-t_0))u^{*,\rho,t_0}_i(\sigma,.)=v^{\rho}_i(\tau,.)=v_i(t,.),~\sigma=\frac{\tau-t_0}{\sqrt{1-(\tau-t_0)^2}}
\end{equation}
on an appropriate time interval $\tau\in \left[t_0,t_0+\delta\right]$ corresponding to the original time interval $t\in \left[t_0,t_0+\delta\rho\right]$ such that
\begin{equation}\label{sigmatime}
\sigma \in \left[0,\frac{\delta}{\sqrt{1-\delta^2}}\right]. 
\end{equation}
The parameters $\lambda >0$ and $\delta >0$ will be chosen to be small positive constants. 
Assume that we have computed $v^{\rho,l-1}_i(l-1,.),~ 1\leq i\leq n$ for $l\geq 1$ where at each time step $l\geq 1$ we have $t_0=l-1$ in the transformation above such that
\begin{equation}\label{timelocalscheme}
v^{\rho,l}_i(\tau,x)=\lambda (1+(\tau-(l-1))u^{*,\rho,l}_i(\sigma,x)
\end{equation}
where $\sigma\in \left[0,\frac{\delta}{\sqrt{1-\delta^2}}\right]$ is defined as in (\ref{*eqloc}) and corresponds to a local time variable $\tau\in \left[l-1,l-1+\delta\right]$. The following scheme may be iterated for substeps of length $\frac{\delta}{\sqrt{1-\delta^2}}$ with respect to the $\sigma$-variable correspondind to time intervals with respect to time variable $\tau$ of length $\delta$, i.e. at substep $m$ of time step $l$ to the time interval  
\begin{equation}
\left[l-1+(m-1)\delta,l-1+m\delta\right]
\end{equation}
for finitely many substeps $m\geq 1$ until time $\tau=l$ is achieved, such that the next main time step $l$ can be initialized. For each substep number $m\geq $ we consider the function 
$u^{*,\rho,t_0}_i(\sigma,.),~ 1\leq i\leq n$ with $t_0=l-1+(m-1)\frac{\delta}{\sqrt{1-\delta^2}}$ on the time interval $\left[0,\frac{\delta}{\sqrt{1-\delta^2}} \right]$.  
This means that at each time step number $l\geq 1$ we go by time step size $\delta\rho$ in original time coordinates $t$ and by time step size $\delta$ in time coordinates $\tau=\rho t$ which in turn corresponds to a time interval for $\sigma$ of the form give in (\ref{sigmatime}).
Assume inductively that at the beginning of time step $l$ we have
\begin{equation}
{\big |}v^{\rho,l-1}_i(l-1,.){\big |}_{H^q\cap C^q}\leq C
\end{equation}
for $q\geq 2$, we want to show that this upper bound is inherited, i.e., that we have 
\begin{equation}
{\big |}v^{\rho,l-1}_i(l-1+(m-1)\delta,.){\big |}_{H^q\cap C^q}\leq C,
\end{equation}
where we use the comparison function $u^{*,\rho,t_0}_i(\sigma,.),~ 1\leq i\leq n$ with the appropriate time upper script $t_0$. Since the following estimates do not depend essentially on $t_0$, we do not specify $t_0$ in the following in order to keep notation simple. At the beginning of each time substep we have inductively
\begin{equation}
{\big |}v^{\rho,l-1}_i(t_0,.){\big |}_{H^q\cap C^q}\leq C
\end{equation}
and, hence,
\begin{equation}
{\big |}u^{*,\rho,t_0}_i(0,.){\big |}_{H^q\cap C^q}\leq \frac{C}{\lambda}.
\end{equation}
Note that
\begin{equation}\label{*eqloc2}
\begin{array}{ll}
\lambda(1+\delta){\big |}u^{*,\rho,t_0}_i\left(\frac{\delta}{\sqrt{1-\delta^2}},. \right) ,.){\big |}_{H^q\cap C^q}={\big |}v^{\rho}_i\left( t_0+\delta,.\right) {\big |}_{H^q\cap C^q}\\
\\
={\big |}v_i\left( t_0+\rho\delta,.\right) {\big |}_{H^q\cap C^q}.
\end{array}
\end{equation}
Hence, it suffices to show that 
\begin{equation}\label{*eqloc3}
\begin{array}{ll}
{\big |}u^{*,\rho,t_0}_i\left(\frac{\delta}{\sqrt{1-\delta^2}},. \right) ,.){\big |}_{H^q\cap C^q}\leq \frac{C}{\lambda(1+\delta)},
\end{array}
\end{equation}
because this leads to
\begin{equation}\label{*eqloc3}
\begin{array}{ll}
{\big |}v^{\rho}_i\left( t_0+\delta,.\right) {\big |}_{H^q\cap C^q}={\big |}v_i\left( t_0+\rho\delta,.\right) {\big |}_{H^q\cap C^q}\\
\\
=(1+\lambda \delta){\big |}u^{*,\rho,t_0}_i\left(\frac{\delta}{\sqrt{1-\delta^2}},. \right) ,.){\big |}_{H^q\cap C^q}\leq C.
\end{array}
\end{equation}
Applied to the scheme, at each time step number we get inductively with respect to the time substep number $m$
\begin{equation}\label{*eqloc3}
\begin{array}{ll}
{\big |}v^{\rho}_i\left( l-1+m\delta,.\right) {\big |}_{H^q\cap C^q}={\big |}v_i\left( l-1+m\rho\delta,.\right) {\big |}_{H^q\cap C^q}\leq C,
\end{array}
\end{equation}
for all substeps $m\geq 1$, i.e., an uniform global upper bound at discrete times (and especially at time $l$ after finitely many time steps), and since this is inductively true for all time step numbers $l\geq 1$, this leads to an global uniform upper bound independent of time.
In order to estimate the growth of this comparison function we consider the related equation. We have
\begin{equation}
\begin{array}{ll}
\frac{\partial v^{\rho,l}_i}{\partial \tau}=\lambda u^{*,\rho,l}_i(\sigma,x)+\lambda (1+(\tau-(l-1))\frac{\partial}{\partial \sigma}u^{*,\rho,l}_i(\sigma,x)\frac{d\sigma}{d\tau}\\
\\
=\lambda u^{*,\rho,l}_i(\sigma,x)+\lambda (1+(\tau-(l-1))\frac{\partial}{\partial \sigma}u^{*,\rho,l}_i(\sigma,x)\frac{1}{\sqrt{1-\tau^2}^3},
\end{array}
\end{equation}
and the equation (with $\tau_l=\tau-(l-1)$)
\begin{equation}\label{Navleray2aa}
\left\lbrace \begin{array}{ll}
\frac{\partial u^{*,\rho,l}_i}{\partial \sigma}=\rho\nu\sqrt{1-\tau^2_l}^3\sum_{j=1}^n \frac{\partial^2 u^{*,\rho,l}_i}{\partial x_j^2}-\frac{\sqrt{1-\tau^2_l}^3}{1+(\tau-(l-1))}u^{*,\rho,l}_i\\
\\ 
-\rho\sqrt{1-\tau^2_l}^3\lambda (1+(\tau-(l-1))\sum_{j=1}^n u^{\rho,l}_j\frac{\partial u^{*,\rho,l}_i}{\partial x_j}\\
\\ +\rho\sqrt{1-\tau^2_l}^3\lambda (1+(\tau-(l-1))\sum_{j,m=1}^n\int_{{\mathbb R}^n}\left( \frac{\partial}{\partial x_i}K_n(x-y)\right)\times\\
\\
\times \sum_{j,m=1}^n\left( \frac{\partial u^{*,\rho,l}_m}{\partial x_j}\frac{\partial u^{*,\rho,l}_j}{\partial x_m}\right) (\sigma,y)dy,\\
\\
\mathbf{u}^{*,\rho,l}(0,.)=\frac{1}{\lambda}\mathbf{v}^{\rho,l-1}(l-1,.),
\end{array}\right.
\end{equation}
which is considered on the time interval in (\ref{sigmatime}).

The comparison function $u^{*,\rho,t_0}_i,~1\leq i\leq n$ (with $t_0=l-1$ at some time step $l\geq 1$) has local representations in terms of convolutions, where the nonlinear terms in the representation are convolutions with the first spatial derivative of the Gaussian type fundamental solution $G$ of a heat equation $G_{,\tau}-\rho \nu \mu^{\tau,1} \Delta G=0$ (cf. below). More precisely, for the multivariate spatial derivatives of order $|\alpha|\geq 1$ we get for the increment
\begin{equation}
D^{\alpha}_x\delta u^{*,\rho,t_0}(\sigma,x)=D^{\alpha}_x u^{*,\rho,t_0}(\sigma,x)-D^{\alpha}_xu^{*,\rho,t_0}(0,x)
\end{equation}
the representation
\begin{equation}\label{Navlerayusubschemepre}
\begin{array}{ll}
D^{\alpha}_x\delta u^{*,\rho,t_0}_i(\sigma,x)=\int_{{\mathbb R}^n}D^{\alpha}_xu^{*,\rho,t_0}_i(0,y)G(\sigma,x;0,y)dy-D^{\alpha}_xu^{*,\rho,t_0}(0,x)\\
\\
-\int_{0}^{\sigma}\int_{{\mathbb R}^n}\mu^*(s)  u^{*,\rho,t_0,p}_{i,\alpha}(s,y)G(\sigma,x;s,y)dyds\\
\\
-\rho\int_{0}^{\sigma}\int_{{\mathbb R}^n}\mu^{*,\tau,2}(s)\sum_{j=1}^n \left( u^{*,\rho,t_0}_j\frac{\partial u^{\rho,t_0}_i}{\partial x_j}\right)_{,\beta} (s,y)G_{,j}(\sigma,x;s,y)dyds\\
\\
+ \rho\int_{0}^{\sigma}\int_{{\mathbb R}^n} \mu^{*,\tau,2}(s)\sum_{j,r=1}^n\int_{{\mathbb R}^n}\left( \frac{\partial}{\partial x_i}K_n(z-y)\right)\times\\
\\
\times \sum_{j,r=1}^n\left( \frac{\partial u^{*,\rho,t_0}_r}{\partial x_j}\frac{\partial u^{*,\rho,t_0}_j}{\partial x_r}\right)_{,\beta} (s,y)G_{,j}(\sigma,x;s,z)dydzds,
\end{array}
\end{equation}
where the coefficients $\mu^{*}$ and $\mu^{*,\tau,2}$ can be read off by comparison with (\ref{Navleray2aa}) - they are just localized versions of the coefficients defined earlier.
For the increment of the value function itself we can still get for $u^{\rho,t_0}_i\in H^m\cap C^m,~m\geq 2$ for all $1\leq i\leq n$ for a ball $B^n_R(x)$ of arbitrary large radius $R>0$ in ${\mathbb R}^n$ around $x$ the representation
\begin{equation}\label{Navlerayusubschemepre222}
\begin{array}{ll}
\delta u^{*,\rho,t_0}_i(\sigma,x)=\int_{{\mathbb R}^n}u^{*,\rho,t_0}_i(0,y)G(\sigma,x;0,y)dy
-u^{*,\rho,t_0}(0,x)\\
\\
-\int_{0}^{\sigma}\int_{{\mathbb R}^n}\mu(s)  u^{\rho,t_0}_{i}(s,y)G(\sigma,x;s,y)dyds\\
\\
-\sum_j\rho\int_{0}^{\sigma}\int_{{\mathbb R}^n}\mu^{\tau,2}(s)\sum_{j=1}^n F_{ij}(\mathbf{u}^{\rho,t_0})(s,y)G_{,j}(\sigma,x;s,y)dyds\\
\\
+ \rho\int_{0}^{\sigma}\int_{B^n_R(x)} \mu^{\tau,2}(s)\sum_{j,r=1}^n\int_{{\mathbb R}^n}\left( K_n(z-y)\right)\times\\
\\
\times \sum_{j,r=1}^n\left( \frac{\partial u^{\rho,t_0}_r}{\partial x_j}\frac{\partial u^{\rho,t_0}_j}{\partial x_r}\right) (s,y)G_{,i}(\sigma,x;s,z)dydzds\\
\\
+ \rho\int_{0}^{\sigma}\int_{{\mathbb R}^n\setminus B^n_R(x)} \mu^{\tau,2}(s)\sum_{j,r=1}^n\int_{{\mathbb R}^n}\left( K_{n,i}(z-y)\right)\times\\
\\
\times \sum_{j,r=1}^n\left( \frac{\partial u^{\rho,t_0}_r}{\partial x_j}\frac{\partial u^{\rho,t_0}_j}{\partial x_r}\right) (s,y)G(\sigma,x;s,z)dydzds\\
\\
+\mbox{boundary terms},
\end{array}
\end{equation}

Next we analyze the viscosity damping. As we remarked, starting from the representations in (\ref{Navlerayusubschemepre222}) and in (\ref{Navlerayusubschemepre}) we can estimate viscosity damping based on the Plancherel identity and the fact that the summand
\begin{equation}
\int_{{\mathbb R}^n}D^{\alpha}_xu^{\rho,l,m}_i(m-1,y)p^{l,m}(\sigma,x;m-1,y)dy
\end{equation}
is a convolution with respect to the spatial variables. Indeed a Fourier tansform operation ${\cal F}$ with respect to the spatial variables leads for $0\leq |\alpha|\leq q$ to
\begin{equation}\label{fourier}
{\cal F}\left( D^{\alpha}_xu^{\rho,t_0}_i\right)(\sigma,\xi)={\cal F}\left( D^{\alpha}_xu^{\rho,t_0}_i\right)(0,\xi)\exp\left(-\rho\nu \mu'|\xi|^2(\sigma-t_0)\right).
\end{equation}
For $\sigma-t_0=\Delta'$ the integer $H^q$ norms can be computed by the $L^2$ norms of the expressions in (\ref{fourier}). As the function 
${\cal F}\left(D^{\alpha}_xu^{\rho,t_0}_i(0,.)\right) $ is continuous ,  for each $\epsilon >0$ we find small $a,\Delta '>0$ and a ball $B^n_a$ of radius $a$ around $0$ in ${\mathbb R}^n$ such that
\begin{equation}\label{fourier}
\begin{array}{ll}
{\big |}{\cal F}\left( D^{\alpha}_xu^{\rho,t_0}_i\right)(\Delta',.){\big |}^2_{L^2}\\
\\
\leq \int_{{\mathbb R}^n}{\cal F}\left( D^{\alpha}_xu^{\rho,t_0}_i\right)(0,\xi)^2\left( 1-2\rho\nu \mu'|a|^2\Delta'\right)d\xi\\
\\
+\int_{B^n_a}{\cal F}\left( D^{\alpha}_xu^{\rho,t_0}_i\right)(0,\xi)^22\rho\nu \mu'|\xi|^2\Delta'd\xi+\epsilon\\
\\
\leq {\big |}{\cal F}\left( D^{\alpha}_xu^{\rho,t_0}_i\right)(\Delta',.){\big |}^2_{L^2}\left(1-2\rho\nu \mu'|a|^2\Delta'+
C^a_{max}2\rho\nu \mu'\Delta'\frac{\pi^n}{\Gamma(n/2+1)} a^n\right) +\epsilon
\end{array}
\end{equation}
where 
\begin{equation}
C^{a}_{max}:=\frac{\max_{\xi\in B^n_a}{\cal F}\left( D^{\alpha}_xu^{\rho,t_0}_i\right)(0,\xi)^2}{{\big |}{\cal F}\left( D^{\alpha}_xu^{\rho,t_0}_i\right)(\Delta',.){\big |}^2_{L^2}}
\end{equation}
and where $\Gamma$ is the well-known $\Gamma$-function. Hence for$n\geq 3$ and $a=\Delta^s$ for $s\in \left(\frac{1}{3},\frac{1}{2}\right)$ we can offset growth of order $(\Delta')^2$. However, the growth increment of the nonlinear terms has an upper bound  $C\rho\Delta'$ for some $C>0$, and we get a global scheme if we choose a time step size $\rho=\Delta'$. Hence this growth can be offset by viscosity damping. We summarize
\begin{lem}
For $\Delta'>0$ small enough and the choice $\rho=\Delta'$ and $a=(\Delta')^{s}$ with $s\in \left(  \frac{1}{3},\frac{1}{2}\right) $ viscosity damping offsets the growth of the nonlinear terms in the representations (\ref{Navlerayusubschemepre222}) and in (\ref{Navlerayusubschemepre}). 
\end{lem}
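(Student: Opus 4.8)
The plan is to prove the lemma by treating each of the representations (\ref{Navlerayusubschemepre222}) and (\ref{Navlerayusubschemepre}) as a sum of three contributions — the initial--data convolution, the potential (auto--control) damping term, and the nonlinear Burgers and Leray--projection increments, each carrying a factor $\rho$ — and by showing that the diffusive deficit hidden in the initial--data convolution already outweighs the nonlinear increments, so that pure viscosity damping suffices and the potential term is only a bonus. Since the Gaussian $G$ solves a heat equation with purely time--dependent coefficients, the leading term of its Levy expansion is exactly $G_{\mu'(s)}$, whose spatial convolution is the Fourier multiplier $\exp(-\rho\nu\mu'|\xi|^2\Delta')$ with $\Delta'=\sigma-t_0$; the higher Levy terms carry additional powers of $\rho$ and will be absorbed into constants. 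So the first step is, for each fixed multi-index $0\le|\alpha|\le q$, to express the $H^q$--part of the initial--data increment via Plancherel as a weighted $L^2$--integral of $\widehat{D^\alpha u}(0,\cdot)$ against $\exp(-\rho\nu\mu'|\xi|^2\Delta')-1$, and to dispatch the $C^q$--part, together with the $B^n_R(x)$/complement split of the Leray kernel in (\ref{Navlerayusubschemepre222}) and the boundary terms, by the same pointwise bounds for $G$ and $G_{,i}$ already used in the local contraction argument.

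The second step is the low--high frequency split at $|\xi|=a$, $a=(\Delta')^{s}$. For $|\xi|\ge a$ one uses $\exp(-\rho\nu\mu'|\xi|^2\Delta')\le\exp(-\rho\nu\mu' a^2\Delta')$, which is where the genuine gain comes from; for $|\xi|<a$ one uses $\exp(-\rho\nu\mu'|\xi|^2\Delta')\le 1$ and bounds the lost low--frequency mass by $\max_{B^n_a}|\widehat{D^\alpha u}(0,\cdot)|^2\cdot|B^n_a|$, which is $O(a^n)$ times the square of the norm because $\widehat{D^\alpha u}(0,\cdot)$ is continuous at the origin — and this is exactly where the decay of the data beyond $H^m\cap C^m$ noted in Remark \ref{initialrem} enters, to keep the constant $C^a_{max}$ finite and bounded as $a\downarrow 0$. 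Combining, the norm after $\sigma$--time $\Delta'$ is at most the initial norm times $1-\kappa\rho\nu\mu' a^2\Delta'$ for some $\kappa\in(0,1)$, provided $a^n=o(a^2)$, i.e. provided $n\ge3$; taking square roots converts this into a strict deficit of order $\rho a^2\Delta'$, which for $\rho=\Delta'$ and $a=(\Delta')^s$ is a fixed power of $\Delta'$.

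The third step is to show the nonlinear increments are of strictly higher order than that deficit. Here I would re-use the spatial--antisymmetry mechanism of item (i): on $B_{\rho^{r}}(x)$ the convolution with $G_{,j}$ is rewritten, using the antisymmetry of the first order spatial derivative of a heat kernel with purely time--dependent coefficients, as an integral of a difference $f_i(s,y)-f_i(s,y^{-j})$, which is small by (H\"older) continuity of the $f_i$ (estimate (\ref{epsilonest})); on $\mathbb R^n\setminus B_{\rho^{r}}(x)$ the exponential carrying $\rho^{1-2r}$ in its argument yields the arbitrarily--high--power bound (\ref{pointupper1*}) with $k\ge1$. Choosing $r$ near $\tfrac13$ and $s\in\left(\tfrac13,\tfrac12\right)$ makes both contributions smaller than the deficit; one then observes that the potential damping term (which has no $\rho$) only improves the estimate, and that all constants are uniform in the substep number $m$, so the resulting one--step inequality iterates.

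The hard part is the simultaneous tuning of exponents. The viscosity gain lives on the low frequencies, where diffusion is weakest, so the deficit is only a modest power of $\Delta'$; the low--frequency truncation error scales like $a^n$, which pushes $a=(\Delta')^s$ small and thereby weakens the $a^2$ gain; and the naive bound $C\rho\Delta'$ on the nonlinear increments is not by itself small enough, so one must really invest the full antisymmetry cancellation to drive them to a higher power of $\rho$. It is the joint compatibility of ``enough gain'', ``negligible truncation error'' and ``negligible nonlinear increment'' that forces $s\in\left(\tfrac13,\tfrac12\right)$ and the matching step size $\rho=\Delta'$; making the constants $\mu'$, $\kappa$, $C^a_{max}$ and the modulus of continuity of the $f_i$ genuinely uniform in the time step number $l$ and substep number $m$ is the bookkeeping one must carry out with care.
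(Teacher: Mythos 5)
Your plan reproduces the paper's steps 1 and 2 faithfully — the Plancherel reduction of the initial--data convolution to the multiplier $\exp(-\rho\nu\mu'|\xi|^2\Delta')$, the split at frequency $a$ with the high--frequency exponential gain and the low--frequency truncation loss of size $\sim C^a_{\max}\,a^n$, and the observation that $n\ge 3$ is exactly what makes $a^n=o(a^2)$. That part is the same argument as the paper's, down to the bookkeeping.

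Where you genuinely depart from the paper is step 3. The paper's own proof of this lemma does \emph{not} invoke the antisymmetry cancellation of $G_{,j}$ at this point: it simply asserts that ``the growth increment of the nonlinear terms has an upper bound $C\rho\Delta'$'' and then declares victory with $\rho=\Delta'$. You, in contrast, import the antisymmetrisation over the half--ball $B^{x_j\ge y_j}_{\rho^r}(x)$ and the tail bound (\ref{pointupper1*}) from item (i) of Section 4 to drive the nonlinear increments to a strictly higher power of $\rho$. This is not merely a stylistic difference: with $\rho=\Delta'$ and $a=(\Delta')^s$ the viscosity deficit obtained in step 2 is of order $\rho\,a^2\,\Delta'=(\Delta')^{2+2s}$ (after taking a square root of the Plancherel inequality), which is a \emph{higher} power of $\Delta'$ than the claimed nonlinear growth $\rho\Delta'=(\Delta')^{2}$, so the naive bound the paper quotes cannot actually be offset by the viscosity damping alone. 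Your instinct that ``one must really invest the full antisymmetry cancellation'' is therefore correct, and your route is the one that could in principle close the argument — it is not what the paper does.

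There is, however, a gap you name but do not fill, and it is precisely the dangerous one. The antisymmetrised ball integral (\ref{epsilonest}) yields a bound of the form $\rho^{1+\delta_0}\,\Delta'\cdot\epsilon$ where $\delta_0$ is described in the text as a ``small positive constant'' and $\epsilon$ is the modulus--of--continuity gain on $B_{\rho^r}(x)$; the tail bound (\ref{pointupper1*}) gives $(\rho^{1-2r})^{2k+1}\,(\Delta')^{k+1}$. For these to sit \emph{below} the viscosity deficit $(\Delta')^{2+2s}$ with $s\in(1/3,1/2)$ you need the combined gain in the exponent — from $\delta_0$, from $\epsilon\lesssim\rho^r$, and from the choice of $r$ near $1/3$ — to dominate $2s$, which ranges up to $1$. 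Nothing in the cited estimates guarantees $\delta_0>2/3$ or provides the quantitative modulus of continuity of the $f_i$ uniformly in $l$ and $m$, so the ``joint compatibility of exponents'' you flag is not established but merely asserted. Until this arithmetic is carried out, the proof is incomplete; and note that the paper itself sidesteps this by letting the \emph{potential} damping, not the viscosity damping, carry the load in the very next lemma.
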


Having analyzed potential damping and viscosity damping it is not difficult to prove
\begin{lem} Given $t_0\geq 0$ 
there exist $\delta >0$ and $\rho >0$ and $C>0$ (all independent of $t_0$) such that (\ref{*eqloc3}) holds. All the quantifiers have a constructive interpretation. For the constructive nes of the $\rho$ quantifier we refer to the arguments which lead to (\ref{pointupper1*}) and the related estimate of the complementary intergal over the ball above in item i).
\end{lem}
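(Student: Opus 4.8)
The plan is to read off (\ref{*eqloc3}) from the two damping effects already isolated: the potential damping introduced by the time dilatation, which contributes \emph{exactly} the factor $\frac{1}{1+\delta}$, and the viscosity damping, which by the lemma above absorbs the growth of the nonlinear terms. First I would record the a priori information: applying Theorem \ref{loccontr} a fortiori (the equation (\ref{Navleray2aa}) for $u^{*,\rho,t_0}_i$ carries the extra damping term $-\mu^{*}u^{*,\rho,t_0}_i$, so a contraction result holds on the interval (\ref{sigmatime}) with the same or a larger time step size), one gets that for sufficiently small $\rho>0$ the functions $u^{*,\rho,t_0}_i$ exist on $\sigma\in\left[0,\tfrac{\delta}{\sqrt{1-\delta^2}}\right]$, lie there in $H^q\cap C^q$, and satisfy a rough a priori bound $\sup_{\sigma}\|u^{*,\rho,t_0}_i(\sigma,\cdot)\|_{H^q\cap C^q}\le\frac{2C}{\lambda}$, where $C$ is the inductive data bound (by Remark \ref{initialrem} a fixed multiple of $\|h\|_{H^q\cap C^q}$, hence independent of $t_0$). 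Since the localized transformation (\ref{*eqloc}) keeps the local time $\tau-t_0$ inside $[0,\delta]$, all coefficients $\mu^{*},\mu^{*,\tau,1},\mu^{*,\tau,2}$ stay within $O(\delta)$ of $1$ \emph{uniformly in $t_0$}, so every constant produced below is independent of $t_0$; this uniformity is the whole reason for using the localized rather than the global dilatation.

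Next I would run the estimate through the representations (\ref{Navlerayusubschemepre}) (for $1\le|\alpha|\le q$) and (\ref{Navlerayusubschemepre222}) (for $|\alpha|=0$). The first line together with the potential-damping line of each representation is the Duhamel formula for the semigroup of $\partial_\sigma-\rho\nu\mu^{*,\tau,1}\Delta+\mu^{*}$; since $\mu^{*}$ depends only on time this factorizes, and using the identity $\int_0^{\delta/\sqrt{1-\delta^2}}\mu^{*}(s)\,ds=\int_0^{\delta}\frac{d\tau_l}{1+\tau_l}=\ln(1+\delta)$ one obtains the contraction factor $e^{-\ln(1+\delta)}=\frac{1}{1+\delta}$ on $\|u^{*,\rho,t_0}_i(0,\cdot)\|_{H^q\cap C^q}$, together with a strictly negative viscosity-damping remainder which, by the Fourier/Plancherel analysis established above, offsets growth of order $(\Delta')^2$ with $\Delta'=\tfrac{\delta}{\sqrt{1-\delta^2}}$. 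The two nonlinear lines (Burgers and Leray) are of the form (\ref{ulmrep*222}): splitting the spatial integral into a ball $B_{\rho^r}(x)$ and its complement, the complement is bounded by (\ref{pointupper1*}) (super-polynomially small in $\rho$), while on the ball the antisymmetry of $G_{,j}$ reduces the integral to one over a half-ball of an increment of the quadratic data $f_i$, which by (\ref{ulmrep*333})--(\ref{epsilonest}) is $O(\rho^{1+\delta_0}\Delta'\,\varepsilon)$; by the lemma above, with $\rho=\Delta'$ this whole nonlinear growth is offset by the viscosity-damping remainder. The boundary terms in (\ref{Navlerayusubschemepre222}) occur only for $|\alpha|=0$ and, for fixed $R$, are exponentially small in $R^2/\rho$, hence dominated as well. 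Collecting, one arrives at
\begin{equation}
\|u^{*,\rho,t_0}_i(\sigma_\delta,\cdot)\|_{H^q\cap C^q}\le\frac{1}{1+\delta}\,\|u^{*,\rho,t_0}_i(0,\cdot)\|_{H^q\cap C^q}\le\frac{C}{\lambda(1+\delta)},\qquad\sigma_\delta:=\tfrac{\delta}{\sqrt{1-\delta^2}},
\end{equation}
which is (\ref{*eqloc3}). One then fixes $\delta>0$ small, chooses $\rho>0$ small depending only on $C,\lambda,\delta,\nu$ and $R$ (hence not on $t_0$), and notes that all of these choices are explicit; the only quantifier worth commenting on is $\rho$, whose constructive content is precisely the quantitative estimate (\ref{pointupper1*}) and the complementary-ball estimate of item i).

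I expect the hard part to be the tightness of (\ref{*eqloc3}): the potential damping produces the factor $\frac{1}{1+\delta}$ \emph{on the nose}, so there is no slack left to absorb errors by mere smallness -- every nonlinear and boundary contribution must be genuinely cancelled by the strictly negative viscosity-damping remainder rather than just made small. This is why the argument cannot avoid the $G_{,j}$-antisymmetry device (which pushes the nonlinear contributions to higher order in $\rho$, so that after $\delta$ is fixed they become negligible when $\rho$ is shrunk) in combination with the viscosity-damping lemma above. A secondary point requiring attention, but automatic here, is the uniformity in $t_0$: one must check that no constant in (\ref{pointupper1*}), in the Levy expansion of the transformed heat kernel, or in Theorem \ref{loccontr} degrades as $t_0\to\infty$, which holds because the localized transformation confines the local time to $[0,\delta]$ and therefore freezes all coefficients near their values at $\tau-t_0=0$. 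Iterating (\ref{*eqloc3}) over the finitely many substeps $m$ of each time step $l$ and then over $l\ge 1$ yields the global uniform bound $\sup_{t<\infty}\|v_i(t,\cdot)\|_{H^m\cap C^m}\le C$ and, via the localized transformation, the decay stated in Corollary \ref{maincor}.
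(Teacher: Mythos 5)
Your proposal reaches the same conclusion by a genuinely different route, and the difference is worth making explicit. The paper's proof estimates the Duhamel damping integral directly: it bounds $\int_0^{\delta_0}\mu^{*}(s)\,u^{*,\rho,t_0}_{i,\alpha}(s,\cdot)\,G\,ds$ from below by something strictly larger than $\frac{C}{\lambda}\cdot\frac{\delta}{1+\delta}$, names the excess $\Delta_0>0$, and then chooses $\rho,\lambda$ so small that (a) the diffusion error $\int G(\delta_0,x;0,y)D^\alpha u(0,y)dy - D^\alpha u(0,x)$ is $<\Delta_0/2$ and (b) the Burgers and Leray increments are $<\Delta_0/2$; viscosity damping is not invoked in the paper's argument for this lemma. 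You instead absorb the potential term with the integrating factor, which is a clean observation: since $\mu^{*}(s)\,ds=\frac{d\tau_l}{1+\tau_l}$ one has $\int_0^{\delta_0}\mu^{*}(s)\,ds=\ln(1+\delta)$ \emph{exactly}, so the linear part of the Duhamel formula produces the required factor $\frac{1}{1+\delta}$ with no margin, and you are then forced to invoke the viscosity-damping lemma (strict contraction of the purely-time-coefficient heat semigroup on $H^q$, quantified via Plancherel) to dominate the nonlinear and boundary contributions. Both routes are valid; in fact yours makes transparent \emph{why} a slack exists in the paper's version, namely $\ln(1+\delta)>\frac{\delta}{1+\delta}$ for $\delta>0$, a fact the paper's rough algebra (which, taken literally, only yields $\approx(1-\delta)\delta\cdot\frac{C}{\lambda}$, i.e.\ $\approx\frac{\delta}{1+\delta}\cdot\frac{C}{\lambda}$ without clear room to spare) does not display cleanly. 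Two small caveats on your write-up: the a priori $\frac{2C}{\lambda}$ bound comes from Theorem \ref{loccontr} only after noting that the damping term can only improve the contraction constant, which you state but should tie explicitly to (\ref{Navleray2aa}); and the "factorizes" step requires the observation that $\mu^{*}$ is purely time-dependent so it commutes with the spatial operator $\rho\nu\mu^{*,\tau,1}\Delta$, otherwise the integrating factor does not pull out of the Duhamel integral — you invoke this, but it is the load-bearing hypothesis and deserves one more sentence. With those two clarifications your argument stands and, to my reading, is tighter than the paper's own.
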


\begin{proof}
In the following we use quantifiers in the classical sense, but remark that some additional computations lead to explicit dependences for all the constants used and such that they do no depend on time and only on the parameter $\nu$ and the dimension of the problem. As the explicit computation would make the argument less transparent we shall do these explicit computations elsewhere.  Inductively, at time $t_0\geq 0$ we have
\begin{equation}
u^{\rho,t_0}_i(t_0,.)\in H^2\cap C^2.
\end{equation}
Hence, there exist constants $0<C^{\alpha}_{t_0}<\infty$ such that for $0\leq |\alpha|\leq q$ we have
\begin{equation}
{\big |}D^{\alpha}_xu^{\rho,t_0}_i(t_0,.){\big |}\leq C^{\alpha}
\end{equation}
and such that for any $\epsilon >0$ there exists a $c>0$ such that for the domain
\begin{equation}
D_c:=\left\lbrace y\in {\mathbb R}^n|{\big |}\forall~0\leq |\alpha|\leq q:~{\big |}
D^{\alpha}_xu^{\rho,t_0}_i(t_0,y){\big |}\leq c\right\rbrace 
\end{equation}
and such that
\begin{equation}
\forall~0\leq \alpha|\leq q~\mbox{ we have}~\int_{{\mathbb R}^n\setminus D_c}{\big |}D^{\alpha}_xu^{\rho,t_0}_i(t_0,y){\big |}^2dy \leq \epsilon
\end{equation}
In order to preserve the upper bound it suffices to show that for small $\epsilon >0$ there exist a $\rho,\lambda >0$ such that t
\begin{equation}
\int_{{\mathbb R}^n\setminus D_c}{\big |}D^{\alpha}_xu^{\rho,t_0}_i(t_0+\delta_0,y){\big |}^2dy \leq C-\epsilon,
\end{equation}
where
\begin{equation}
\delta_0:=\frac{\delta}{\sqrt{1-\delta^2}}.
\end{equation}
In this sense and for an appropriate domain $D_c$ we consider $x\in D_c$ and the values $\delta u^{\rho,t_0}_i(\delta_0,x)$ and $\delta D^{\alpha}_xu^{\rho,t_0}_i(\delta_0,x)$ for $0\leq |\alpha|\leq q\geq 2$, where we assume that 
\begin{equation}
{\big |}u^{*,\rho,t_0}_i(0,.){\big |}_{H^q\cap C^q}\leq \frac{C}{\lambda}.
\end{equation}
First for $1\leq |\alpha|\leq q$ consider the representation in (\ref{Navlerayusubschemepre}) for $\sigma =\delta_0$. For the damping term we observe that there is a small $\epsilon >0$ and a small $\rho >0$ and a $\delta \in \left(0,\frac{\epsilon}{1-\epsilon} \right) $ (remember $\delta_0=\frac{\delta}{\sqrt{1-\delta^2}}$) such that
\begin{equation}
\begin{array}{ll}
{\big |}\int_{0}^{\delta_0}\int_{{\mathbb R}^n}\frac{\sqrt{1-\tau^2_l(s)}^3}{1+(\tau(s)-(l-1))} u^{*,\rho,t_0,p}_{i,\alpha}(s,y)G(\sigma,x;s,y)dyds{\big |}\\
\\
\geq  \frac{\sqrt{1-\left((1-\epsilon) \delta\right) ^2}^3}{1+(1-\epsilon)\delta}{\big |}\delta_0\frac{C}{\lambda}-\epsilon  {\big |}=
\frac{1-((1-\epsilon)\delta)^2}{1+(1-\epsilon)\delta}{\big |}\delta\frac{C}{\lambda}-\epsilon  {\big |}>\frac{C}{\lambda}\left(\frac{\delta}{(1+\delta)}\right)=\frac{C}{\lambda}\left(1- \frac{1}{(1+\delta)}\right),
\end{array}
\end{equation}
which is sufficient since we have to show that for $1\leq |\alpha|\leq q$ there exist $\delta _0>0$, $\rho>0$ and $C>0$ such that
\begin{equation}\label{*eqloc33}
\begin{array}{ll}
{\big |} D^{\alpha}_xu^{*,\rho,t_0}_i\left(\delta_0,. \right) ,.){\big |}\leq \frac{C}{\lambda(1+\delta)},
\end{array}
\end{equation}
where for $\alpha=0$ a similar argument holds.
Here, the positive difference 
\begin{equation}
 \frac{1-((1-\epsilon)\delta)^2}{1+(1-\epsilon)\delta}{\big |}\delta\frac{C}{\lambda}-\epsilon  {\big |}-\frac{C}{\lambda}\left(\frac{\delta}{(1+\delta)}\right):=\Delta_0>0
\end{equation}
is large enough such that there exists a $\rho>0$ (possibly smaller then the $\rho$ above)
such that the growth of the term
\begin{equation}\label{Navlerayusubschemepre2**}
\begin{array}{ll}
\delta D^{\alpha}_xu^{*,\rho,t_0}_i(\sigma,x)=\int_{{\mathbb R}^n}u^{*,\rho,t_0}_i(0,y)G(\sigma,x;0,y)dy
-D^{\alpha}_xu^{*,\rho,t_0}(0,x)< \frac{\Delta_0}{2},
\end{array}
\end{equation}
and where for the same $\rho>0$ we can choose a $\lambda >0$ (possible smaller then the $\lambda >0$ chosen before) such that the nonlinear growthterms satisfy
\begin{equation}
\begin{array}{ll}
{\Big |}-\rho\int_{0}^{\sigma}\int_{{\mathbb R}^n}\mu^{*,\tau,2}(s)\sum_{j=1}^n \left( u^{*,\rho,t_0}_j\frac{\partial u^{\rho,t_0}_i}{\partial x_j}\right)_{,\beta} (s,y)G_{,j}(\sigma,x;s,y)dyds\\
\\
+ \rho\int_{0}^{\sigma}\int_{{\mathbb R}^n} \mu^{*,\tau,2}(s)\sum_{j,r=1}^n\int_{{\mathbb R}^n}\left( \frac{\partial}{\partial x_i}K_n(z-y)\right)\times\\
\\
\times \sum_{j,r=1}^n\left( \frac{\partial u^{*,\rho,t_0}_r}{\partial x_j}\frac{\partial u^{*,\rho,t_0}_j}{\partial x_r}\right)_{,\beta} (s,y)G_{,j}(\sigma,x;s,z)dydzds{\Big |}< \frac{\Delta_0}{2}.
\end{array}
\end{equation}
The argument for value function itself is similar, where inside a large radius $R$ around $x$ we have similar representations, and use the strong damping via the Gaussian outside that radius as $\rho >0$ becomes small.
\end{proof}

Finally we mention that the considerations above lead to decreasing maxmum of the value function and to decay to zero at infinite time.

\section{Appendix}

In this appendix we add a short rather self-contained text which proves the existence of global regular upper bounds (in this first section of the appendix we use some notation which can be found in the introduction of the main text).\footnote{We thank Prof. Koch, Bonn, for some questions and critical remarks in an attempt to interpreter an earlier draft of this appendix (informal discussion).  Since the sharper arguments above show that there are upper bounds which are independent of the time horizon, some remarks in the appendix about time scaling may seem to be a bit 'special', but they may be considered to be rather independent considerations. The upshot of the discussion so far concerning this appendix seems to be that the matter of time step size needs to be clarified. For the global time transformation this is indeed an issue, where it is sufficient to show that the time step size $\rho >0$ has no or only a weak dependence on the time horizon $T>0$, for example of order $\rho\sim \frac{1}{T^s}$ for some $s\in (0,1)$. We express this in the update adding the qualification 'not essentially dependent on the time horizon' to the quantifier in Lemma \ref{lem2} and supplement the argument in this respect. Accordingly, in (\ref{rhocontr})) we remark that no transition of time scaling is needed for the argument. The other remarks below and the preliminaries above may also be helpful in order to make the interplay between damping and spatial effects of the operators more transparent. }  

Global existence proofs in this paper are based on the preservation of upper bounds with respect to $H^q\cap C^q$-norms for $q\geq 2$ of the comparison functions $u^{*,\rho,t_0}_i$ at each time step $t_0=l-1$ for $l\geq 1$. In order to prove the existence of global regular upper bounds (which may depend linearly on time) it is sufficient to prove the preservation of upper bounds of a comparison function, i.e., it is sufficient to prove that there exists a positive constant $C_{\lambda}>0$ such that for $t_0\geq 0$ there exists $\delta_0>0$ independent of $t_0$ such that for all $0\leq |\alpha|\leq q\geq 2$ we have
\begin{equation}\label{preservationapp}
\sqrt{\int_{{\mathbb R}^n}{\big |}D^{\alpha}_xu^{*,\rho,t_0}_i(0,y){\big |}^2dy}\leq C_{\lambda}\rightarrow \sqrt{\int_{{\mathbb R}^n}{\big |}D^{\alpha}_xu^{*,\rho,t_0}_i(\delta_0,y){\big |}^2dy}\leq C_{\lambda}.
\end{equation}
In order to prove such a preservation, spatial features of the operators can be used, and, especially the appearance of first order derivatives in the nonlinear Burgers and Leray projection term.
For $1\leq |\alpha|\leq q\geq 2$ and with $\alpha=\beta+1_j$ we can use the representation 
\begin{equation}\label{Navlerayusubschemepreapp}
\begin{array}{ll}
D^{\alpha}_xu^{*,\rho,t_0}_i(\delta_0,x)=\int_{{\mathbb R}^n}D^{\alpha}_xu^{*,\rho,t_0}_i(0,y)G(\delta_0,x;0,y)dy\\
\\
-\int_{0}^{\delta_0}\int_{{\mathbb R}^n}\mu^{*,t_0}(s)  u^{*,\rho,t_0}_{i,\alpha}(s,y)G(\delta_0,x;s,y)dyds\\
\\
-\rho\int_{0}^{\delta_0}\int_{{\mathbb R}^n}\mu^{*,t_0,\tau,2}(s)\sum_{j=1}^n \left( u^{*,\rho,t_0}_j\frac{\partial u^{\rho,t_0}_i}{\partial x_j}\right)_{,\beta} (s,y)G_{,j}(\delta_0,x;s,y)dyds\\
\\
+ \rho\int_{0}^{\delta_0}\int_{{\mathbb R}^n} \mu^{*,t_0,\tau,2}(s)\sum_{j,r=1}^n\int_{{\mathbb R}^n}\left( \frac{\partial}{\partial x_i}K_n(z-y)\right)\times\\
\\
\times \sum_{j,r=1}^n\left( \frac{\partial u^{*,\rho,t_0}_r}{\partial x_j}\frac{\partial u^{*,\rho,t_0}_j}{\partial x_r}\right)_{,\beta} (s,y)G_{,j}(\delta_0,x;s,z)dydzds.
\end{array}
\end{equation}
A similar representation can be used in the case $|\alpha|=0$ (cf. below and the main text). 
Here, the time dependent coefficient 
\begin{equation}\label{nonlinearmu}
\mu^{*,t_0,\tau,2}(s)=\sqrt{1-(\tau (s)-t_0)^2}^3\lambda (1+(\tau(s)-t_0),
\end{equation}
of the nonlinear terms have an additional small parameter $\lambda >0$ (next to a small time step size $\rho >0$), while the coefficient
\begin{equation}
\mu^{*,t_0}(s)=\frac{\sqrt{1-(\tau (s)-t_0)^2}^3}{(1+(\tau(s)-t_0)}.
\end{equation}
of the damping term 
\begin{equation}\label{dampingappend}
-\int_{0}^{\delta_0}\int_{{\mathbb R}^n}\mu^{*,t_0}(s)  u^{*,\rho,t_0}_{i,\alpha}(s,y)G(\delta_0,x;s,y)dyds
\end{equation}
has neither a small step size coefficient $\rho$ nor a small parameter $\lambda >0$. At time $\sigma=0$ in transformed coordinates corresponding to $\tau=t_0$ $\tau$-coordinates we have initial data
\begin{equation}
u^{*,\rho,t_0}_i(0,.)=\frac{v^{\rho}_i(t_0,.)}{\lambda}.
\end{equation}
Observe that even in the case of a local comparison function $u^{*,\rho,t_0}_i,~1\leq i\leq n$ the proof of a preservation as in (\ref{preservationapp}) for small $\lambda >0$ does not lead to a strong contraction property of the Navier Stokes equation but to the preservation of an initial data norm (for the original velocity components) multiplied with the factor $\frac{1}{\lambda}$.
Prima facie small positive $\lambda$ seem to be disadvantageous, because we get the factor $\frac{1}{\lambda^2}$ as a factor of the nonlinear term in the local scheme, and this is compensated only by one factor $\lambda$ of the scaled equation. However, we have another coefficient factor $\rho>0$ of the nonlinear terms which can be chosen to be small (e.g. $\rho\sim \lambda^2$), and in the representation in (\ref{Navlerayusubschemepreapp}) the nonlinear Burgers and Leray projection term have a spatial convolution with the first order spatial derivative of the Gaussian.
Moreover, and in contrast the damping term in (\ref{dampingappend}) is 'convoluted' with the fundamental solution $G$ itself, i,e the fundamental solution of the equation 
of the equation
\begin{equation}
\frac{\partial G}{\partial \tau}-\rho\nu \sqrt{1-(\tau-t_0)^2}^3\Delta G=0.
\end{equation}
For small time $G$ itself becomes close to a $\delta$ distribution, such that the damping term in (\ref{dampingappend}) becomes close to $\mu^{*,t_0}(s)u^{*,\rho,t_0}_{i,\alpha}(0,x)\delta_0$. Note that the damping term does not have a small $\lambda$ or a small $\rho$ factor such that it can compensate the growth of the nonlinear terms as the latter growth terms are spatial convolutions with not with respect to the Gaussian, but with respect to the first order derivative of the Gaussian $G_{,i}$ and have a coefficient $\rho\lambda$ which can be chosen to be small.  
(The 'convolutions' involved are technical convolutions only with respect to the spatial variables -for this reason we put marks around this term). Let is consider the growth of the nonlinear terms more closely. For $\delta_0$ and any $\delta_1\in (0,\delta_0)$ we consider the nonlinear terms in 
(\ref{Navlerayusubschemepreapp}). We write these nonlinear terms in the form
\begin{equation}\label{Navlerayusubschemepreapp2}
\begin{array}{ll}
-\rho\int_{0}^{\delta_1}\int_{{\mathbb R}^n}\mu^{*,t_0,\tau,2}(s)\sum_{j=1}^n \left( u^{*,\rho,t_0}_j\frac{\partial u^{\rho,t_0}_i}{\partial x_j}\right)_{,\beta} (s,y)G_{,j}(\delta_0,x;s,y)dyds\\
\\
+ \rho\int_{0}^{\delta_1}\int_{{\mathbb R}^n} \mu^{*,t_0,\tau,2}(s)\sum_{j,r=1}^n\int_{{\mathbb R}^n}\left( \frac{\partial}{\partial x_i}K_n(z-y)\right)\times\\
\\
\times \sum_{j,r=1}^n\left( \frac{\partial u^{*,\rho,t_0}_r}{\partial x_j}\frac{\partial u^{*,\rho,t_0}_j}{\partial x_r}\right)_{,\beta} (s,y)G_{,j}(\delta_0,x;s,z)dydzds\\
\\
-\rho\int_{\delta_1}^{\delta_0}\int_{{\mathbb R}^n}\mu^{*,t_0,\tau,2}(s)\sum_{j=1}^n \left( u^{*,\rho,t_0}_j\frac{\partial u^{\rho,t_0}_i}{\partial x_j}\right)_{,\beta} (s,y)G_{,j}(\delta_0,x;s,y)dyds\\
\\
+ \rho\int_{\delta_1}^{\delta_0}\int_{{\mathbb R}^n} \mu^{*,t_0,\tau,2}(s)\sum_{j,r=1}^n\int_{{\mathbb R}^n}\left( \frac{\partial}{\partial x_i}K_n(z-y)\right)\times\\
\\
\times \sum_{j,r=1}^n\left( \frac{\partial u^{*,\rho,t_0}_r}{\partial x_j}\frac{\partial u^{*,\rho,t_0}_j}{\partial x_r}\right)_{,\beta} (s,y)G_{,j}(\delta_0,x;s,z)dydzds
\end{array}
\end{equation}
For the latter two summands in (\ref{Navlerayusubschemepreapp2}), i.e., the integrals from $\delta_1$ to $\delta_0$, and  in a ball of radius $R>0$ around $x$ and for $\mu\in \left(\frac{1}{2},1\right)$ we can use the estimate
\begin{equation}\label{gcommaia}
{\big |}G_{,i}(\sigma,x;s,y){\big |}\leq \frac{C}{|\sigma-s|^{\mu}|x-y|^{n+1-2\mu}}
\end{equation}
(for some constant $C>0$ which depends only on the dimension $n$) which becomes small for a small radius $R>0$. Outside a ball of radius $R$ around $x$ we have for some $c>0$
\begin{equation}
{\big |}G_{,i}(\sigma,x;s,y){\big |}\leq \frac{c}{|\sigma-s|^{\frac{n+1}{2}}}\exp\left(-c\frac{R^2}{\rho(\sigma-s)}\right) \downarrow 0
\end{equation}
as $\rho \downarrow 0$. Hence there exists a $\rho >0$ such that the modulus of the two latter summands in (\ref{Navlerayusubschemepreapp2}) have the upper bound $0.5\delta_1$. Hence the modulus of the nonlinear terms in (\ref{Navlerayusubschemepreapp2}) has the upper bound
\begin{equation}\label{Navlerayusubschemepreapp3}
\begin{array}{ll}
0.5\delta_1+{\Big |}\rho\int_{0}^{\delta_1}\int_{{\mathbb R}^n}\mu^{*,t_0,\tau,2}(s)\sum_{j=1}^n \left( u^{*,\rho,t_0}_j\frac{\partial u^{\rho,t_0}_i}{\partial x_j}\right)_{,\beta} (s,y)G_{,j}(\delta_0,x;s,y)dyds{\Big |}\\
\\
+ {\Big |}\rho\int_{0}^{\delta_1}\int_{{\mathbb R}^n} \mu^{*,t_0,\tau,2}(s)\sum_{j,r=1}^n\int_{{\mathbb R}^n}\left( \frac{\partial}{\partial x_i}K_n(z-y)\right)\times\\
\\
\times \sum_{j,r=1}^n\left( \frac{\partial u^{*,\rho,t_0}_r}{\partial x_j}\frac{\partial u^{*,\rho,t_0}_j}{\partial x_r}\right)_{,\beta} (s,y)G_{,j}(\delta_0,x;s,z)dydzds{\Big |}.
\end{array}
\end{equation} 
In order to estimate the latter two terms in (\ref{Navlerayusubschemepreapp3}) and in a ball of a small radius $R$ around $x$ we may use the representation 
\begin{equation}\label{fundreptimea}
\begin{array}{ll}
G_{,i}(\delta_0,x;s,y)=\frac{(x_i-y_i)}{\nu\mu^{\tau,1}(s)(\delta_0-s)}G(\delta_0,x;s,y)\\
\\
+\mbox{lower order singularity terms with respect to time},
\end{array}
\end{equation}
where $\delta-s\geq \delta_0-\delta_1>0$ the factor $(x_i-y_i)$ (resp. the factor $\frac{x_i-y_i}{\delta_0-s}$) changes sign at $x$. As the 'convoluted' functions
\begin{equation}
(s,y)\rightarrow \mu^{*,t_0,\tau,2}(s)\sum_{j=1}^n \left( u^{*,\rho,t_0}_j\frac{\partial u^{\rho,t_0}_i}{\partial x_j}\right)_{,\beta} (s,y)
\end{equation}
and
\begin{equation}
(s,y)\rightarrow \mu^{*,t_0,\tau,2}(s)\sum_{j,r=1}^n\int_{{\mathbb R}^n}\left( \frac{\partial}{\partial x_i}K_n(z-y)\right)
\times \sum_{j,r=1}^n\left( \frac{\partial u^{*,\rho,t_0}_r}{\partial x_j}\frac{\partial u^{*,\rho,t_0}_j}{\partial x_r}\right)_{,\beta} (s,y)
\end{equation}
are bounded continuous in a ball $B_{r_{\delta_1}}(x)$ around $x$ of small radius $r_{\delta_1}$ we can use the representation above to obtain an upper bound $\frac{1}{4}\delta_1$ of 
\begin{equation}\label{Navlerayusubschemepreapp5}
\begin{array}{ll}
{\Big |}\rho\int_{0}^{\delta_1}\int_{B_{r_{\delta_1}}(x)}\mu^{*,t_0,\tau,2}(s)\sum_{j=1}^n \left( u^{*,\rho,t_0}_j\frac{\partial u^{\rho,t_0}_i}{\partial x_j}\right)_{,\beta} (s,y)G_{,j}(\delta_0,x;s,y)dyds{\Big |}\\
\\
+ {\Big |}\rho\int_{0}^{\delta_1}\int_{B_{r_{\delta_1}}(x)} \mu^{*,t_0,\tau,2}(s)\sum_{j,r=1}^n\int_{B_{r_{\delta_1}}(x)}\left( \frac{\partial}{\partial x_i}K_n(z-y)\right)\times\\
\\
\times \sum_{j,r=1}^n\left( \frac{\partial u^{*,\rho,t_0}_r}{\partial x_j}\frac{\partial u^{*,\rho,t_0}_j}{\partial x_r}\right)_{,\beta} (s,y)G_{,j}(\delta_0,x;s,z)dydzds{\Big |}.
\end{array}
\end{equation}  
Outside the ball $B_{r_{\delta_1}}(x)$ we can find $\rho >0$ small such that via the usual Gaussian upper bound of $G$ we get in total the upper bound $0.5\delta_1$ for the last two terms in (\ref{Navlerayusubschemepreapp3}), and in total the upper bound $\delta_1$ for the whole expression in (\ref{Navlerayusubschemepreapp3}), which is an upper bound for the nonlinear terms (of the representation of $D^{\alpha}_xu^{*,\rho,t_0}_i(\sigma,x)$ on the interval $[0,\delta_0]$ in (\ref{Navlerayusubschemepreapp})). We have analyzed the viscosity damping above. It is interesting (especially with regard to the study of inviscid limits) that we can rely even on weaker arguments. Indeed it is sufficient observe that for $\delta_1\in (0,\delta_0)$ we find $\rho>0$ small such that the modulus of
\begin{equation}
\begin{array}{ll}
D^{\alpha}_xu^{*,\rho,t_0}_i(\delta_0,x)-D^{\alpha}_xu^{*,\rho,t_0}_i(0,x)\\
\\
=\int_{{\mathbb R}^n}D^{\alpha}_xu^{*,\rho,t_0}_i(0,y)G(\delta_0,x;0,y)dy-D^{\alpha}_xu^{*,\rho,t_0}_i(0,x)
\end{array}
\end{equation}
has the upper bound $\delta_1$. Now, concerning the damping term we have for small $\delta_0$ and small time step size $\rho>0$ we have
\begin{equation}
-\sqrt{\int_{{\mathbb R}^n}\left[ \int_{0}^{\delta_0}\int_{{\mathbb R}^n}\mu^{*,t_0}(s)  u^{*,\rho,t_0}_{i,\alpha}(s,y)G(\delta_0,x;s,y)dyds\right]^2dx}\leq -(C_{\lambda-\delta_1)} 
\end{equation}
such that this $C_{\lambda}$ is preserved for the choice of small $\delta_1$ with $3\delta_1\leq \delta_0$ which is clearly possible.
For $|\alpha|=0$ we can use the representation  
\begin{equation}\label{Navlerayusubschemepre2app}
\begin{array}{ll}
u^{\rho,t_0}_i(\sigma,x)=\int_{{\mathbb R}^n}u^{\rho,t_0}_i(0,y)G(\sigma,x;0,y)dy\\
\\
-\int_{0}^{\sigma}\int_{{\mathbb R}^n}\mu(s)  u^{\rho,t_0}_{i}(s,y)G(\sigma,x;s,y)dyds\\
\\
-\sum_j\rho\int_{0}^{\sigma}\int_{{\mathbb R}^n}\mu^{\tau,2}(s)\sum_{j=1}^n F_{ij}(\mathbf{u}^{\rho,t_0})(s,y)G_{,j}(\sigma,x;s,y)dyds\\
\\
+ \rho\int_{0}^{\sigma}\int_{B^n_R(x)} \mu^{\tau,2}(s)\sum_{j,r=1}^n\int_{{\mathbb R}^n}\left( K_n(z-y)\right)\times\\
\\
\times \sum_{j,r=1}^n\left( \frac{\partial u^{\rho,t_0}_r}{\partial x_j}\frac{\partial u^{\rho,t_0}_j}{\partial x_r}\right) (s,y)G_{,i}(\sigma,x;s,z)dydzds\\
\\
+ \rho\int_{0}^{\sigma}\int_{{\mathbb R}^n\setminus B^n_R(x)} \mu^{\tau,2}(s)\sum_{j,r=1}^n\int_{{\mathbb R}^n}\left( K_{n,i}(z-y)\right)\times\\
\\
\times \sum_{j,r=1}^n\left( \frac{\partial u^{\rho,t_0}_r}{\partial x_j}\frac{\partial u^{\rho,t_0}_j}{\partial x_r}\right) (s,y)G(\sigma,x;s,z)dydzds\\
\\
+\mbox{boundary terms},
\end{array}
\end{equation}
and as the radius $R>0$ can be chosen to be large the argument for the preservation of an upper bound for the comparison function is analogous. We close this introduction of the appendix with the remark that a change of the time variable $t=\rho \tau$ does not cause any problem with the step size. The following remark is trivial but there has been some confusion about this point. If we consider the Navier Stokes equation in transformed time $\tau =\rho t$ for small $\rho >0$, then the transformed velocity component functions $v^{\rho}_i,\ 1\leq i\leq n$ with $v_i(t,.)=v^{\rho}_i(\tau,.),~1\leq i\leq n$ satisfy the Navier Stokes equation 
\begin{equation}\label{Navleray0}
\left\lbrace \begin{array}{ll}
\frac{\partial v^{\rho}_i}{\partial \tau}-\rho\nu\sum_{j=1}^n \frac{\partial^2 v^{\rho}_i}{\partial x_j^2} 
+\rho\sum_{j=1}^n v^{\rho}_j\frac{\partial v^{\rho}_i}{\partial x_j}=\\
\\ \hspace{1cm}\rho\sum_{j,m=1}^n\int_{{\mathbb R}^n}\left( \frac{\partial}{\partial x_i}K_n(x-y)\right) \sum_{j,m=1}^n\left( \frac{\partial v^{\rho}_m}{\partial x_j}\frac{\partial v^{\rho}_j}{\partial x_m}\right) (t,y)dy,\\
\\
\mathbf{v}^{\rho}(0,.)=\mathbf{h}(.).
\end{array}\right.
\end{equation}
If the parameter $\rho >0$ does not depend essentially on the time horizon $T>0$ where a global regular upper bound for the function $v^{\rho}_i,~1\leq i\leq n$ can be proved, then the obtained upper bound can be transferred to the velocity components $v_i,~1\leq i\leq n$. Indeed the same upper bound holds for the norm
\begin{equation}
\sup_{\tau\in [0,T]}{\big |}v^{\rho}_i(\tau,.){\big |}_{H^q\cap C^q}\leq C\Rightarrow \sup_{t\in [0,\rho T]}{\big |}v_i(\tau,.){\big |}_{H^q\cap C^q}\leq C,
\end{equation}
where the expression '... not depend essentially ...' above may be interpreted as $\rho \gtrsim\frac{1}{T^s}$ for some $s\in (0,1)$. For the local transformation the dependence we have a $\rho$ which is indpendent of the time horizon anyway - so in this sense the question is a bit outdated form the point of view of the stronger arguments above. In the case of a global time transformation (as considered mainly in this appendix). Note that a certain spatial feature of the operator is reflected in the representation of the local solution representations. Especially, for $G_{,i}=\frac{|x-y|}{(\sigma -s)}G^*$, we use the fact that the Gaussian $G$ is fundamental solution of a heat equation with purely time dependent coefficients such we have spatial antisymmetry of first order spatial  derivatives of the Gaussian $G_{,i}$ with respect to the spatial variable $x_i-y_i$, i.e., we may use
\begin{equation}\label{ulmrep*333app}
 \begin{array}{ll}
{\Big |}\rho\int_{m-1}^{\sigma}\int_{B_{\rho^r}(x)}f_i(s,y)\frac{(x_j-y_j)}{\sigma -s}G^*(\sigma,x;s,y)dyds{\Big |}=\\
\\
{\Big |}\rho\int_{m-1}^{\sigma}\int_{B^{x_j\geq y_j}_{\rho^r}(x)}{\big (}f_i(s,y)-f_i(s,y^{-j}){\big )}\frac{(x_j-y_j)}{\sigma -s}G^*\sigma,x;s,y)dyds{\Big |},
\end{array}
\end{equation} 
where 
\begin{equation}
B^{x_j\geq y_j}_{\rho^r}(x)=\left\lbrace y\in B_{\rho^r}(x)|x_j\geq y_j\right\rbrace 
\end{equation}
and 
$y^{-j}=\left(y^{-j}_1,\cdots ,y^{-j}_n\right)$ is the vector with $y^{-j}=(-1)^{\delta_{ij}}$ if $y=(y_1,\cdots ,y_n)$ and $\delta_{ij}$ is the Kronecker $\delta$. 
For each $1\leq j\leq n$ we may use continuity of the 'data' $f_i$, and get
estimate in (\ref{epsilonest}) and in (\ref{pointupper1*}) for $k>0$ ensure that the nonlinear growth terms have upper bounds of order $\sim \rho^{1+\delta_0}$ for some $\delta_0>0$ at least such that we may get global schemes via
\begin{equation}
\rho \gtrsim\frac{1}{T^s} \mbox{ for some }s\in \left(r,1\right) 
\end{equation}
and this ensures that the global regular existence result for $v^{\rho}_i,~1\leq i\leq n$ via a scheme $u^{\rho,l,m}_i,~1\leq i\leq n,~l,m\geq 1$ leads to a global regular existence result for the original velocity component functions $v_i,~1\leq i\leq n$. Having said this we can leave the appendix almost unchanged with the only modification that the quantifier for $\rho$ can be interpreted in a constructive way relying on the arguments which lead to (\ref{epsilonest}) and in (\ref{pointupper1*}).

\subsection{Statement of the linear upper bound theorem}

We are concerned with the incompressible Navier Stokes equation Cauchy problem in Leray projection form
\begin{equation}\label{Navleray}
\left\lbrace \begin{array}{ll}
\frac{\partial v_i}{\partial t}-\nu\sum_{j=1}^n \frac{\partial^2 v_i}{\partial x_j^2} 
+\sum_{j=1}^n v_j\frac{\partial v_i}{\partial x_j}=\\
\\ \hspace{1cm}\sum_{j,m=1}^n\int_{{\mathbb R}^n}\left( \frac{\partial}{\partial x_i}K_n(x-y)\right) \sum_{j,m=1}^n\left( \frac{\partial v_m}{\partial x_j}\frac{\partial v_j}{\partial x_m}\right) (t,y)dy,\\
\\
\mathbf{v}(0,.)=\mathbf{h},
\end{array}\right.
\end{equation}
to be solved for $\mathbf{v}=\left(v_1,\cdots ,v_n \right)^T$ on the domain $\left[0,\infty \right)\times {\mathbb R}^n$. The constant $\nu >0$ is the viscosity and $K_n$ is the Laplacian kernel of dimension $n$, and $\mathbf{h}=\left(h_1,\cdots ,h_n \right)^T$ are the initial data.  
We have
\begin{thm}\label{mthm}
We are interested in $n\geq 3$. We assume $\nu >0$ and that for all $1\leq i\leq n$ we have $h_i\in H^m\cap C^m$ for $m\geq 2$, where $H^m$ denotes the Sobolev space of order $m$ and $C^m$ denotes the space of classical differentiable functions with continuous derivatives up to order $m$ as usual. Then we claim that there is a global regular solution $v_i,~1\leq i\leq n$ of (\ref{Navleray}) with
\begin{equation}\label{ex}
v_i\in C^1\left(\left(0,\infty \right), H^m\cap C^m\right)\cap C^0\left(\left[0,\infty \right), H^m\cap C^m\right),
\end{equation}
and such that there exists a constant $C>0$ with
\begin{equation}\label{eq3}
{\big |}v_i(t,.){\big |}_{H^m}\leq C+Ct
\end{equation}
for all $t\geq 0$.
\end{thm}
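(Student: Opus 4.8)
The plan is to run the auto-controlled time-stepping scheme set up above and to reduce the global linear bound to a single \emph{local preservation statement} for the rescaled comparison functions. Concretely, I would construct the solution as a concatenation of the local functions $v^{\rho,l}_i$ solving (\ref{Navleray2a}) on the unit intervals $[l-1,l]$ in the rescaled time $\tau=\rho t$, starting from $v^{\rho,1}_i(0,\cdot)=h_i$ and feeding the final data of step $l-1$ into step $l$. Existence and regularity of each $v^{\rho,l}_i$ on its interval is supplied by Theorem \ref{loccontr}: for $q\geq 2$, given $v^{\rho,l-1}_i(l-1,\cdot)\in C^q\cap H^q$, the linearized iteration (\ref{Navleray4})--(\ref{Navleray6}) contracts for small $\rho>0$ and its limit $v^{\rho,l}_i\in C^{1,q}$ solves the local Cauchy problem with $v^{\rho,l}_i(\tau,\cdot)\in C^q\cap H^q$ for all $\tau\in[l-1,l]$. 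The only point still to be established is that the $C^q\cap H^q$-norm of the hand-over data does not blow up in finitely many steps; controlling this is exactly what the damping is for, and it yields (\ref{eq3}) once the number of steps up to a given horizon is accounted for.

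At each step I would pass, via the time dilatation (\ref{timedil}) and the transformation (\ref{uvtrans}) (or its localized variant (\ref{uvl0}) with a small parameter $\lambda>0$), to the comparison functions $u^{\rho,l,m}_i$ solving (\ref{Navleray3}) on the rescaled intervals $[m-1,m]$ in the new time $\sigma$; these inherit the regularity of $v^{\rho,l}_i$ automatically, so no further contraction argument is required for them. The point of this passage is that $u^{\rho,l,m}_i$ solves an equation carrying the extra \emph{potential damping} term $\mu u^{\rho,l,m}_i$, $\mu=\sqrt{1-\tau^2}^{\,3}/(1+\tau)$, which has \emph{no} factor $\rho$, whereas all spatial (Burgers and Leray) terms carry the small factor $\rho\mu^{\tau,2}$. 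Writing the representation (\ref{ulmrep*}) for $D^{\alpha}_xu^{\rho,l,m}_i$ with $0\le|\alpha|\le q$ in terms of the heat kernel $p^{l,m}$ of $\partial_\sigma p^{l,m}-\rho\nu\mu'\Delta p^{l,m}=0$ (with the standard Levy expansion (\ref{fund})--(\ref{phi}), whose higher terms are $O(\rho^p)$ and harmless since $\mu'$ is only time dependent and Lipschitz), I would establish the implication (\ref{subschemem}): if $|D^{\alpha}_xu^{\rho,l,m-1}_i(m-1,\cdot)|\le C$ then $|D^{\alpha}_xu^{\rho,l,m}_i(m,\cdot)|\le C$.

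The heart of the argument is the comparison, in (\ref{ulmrep*}), of the nonlinear increments with the damping term (\ref{dampu}). For the damping term one uses that $p^{l,m}$ concentrates to a $\delta$ as $\rho\downarrow0$, so (\ref{diff}) makes it close to $D^{\alpha}_xu^{\rho,l,m}_i(m-1,x)\mu(m-1)(\sigma-(m-1))$, a genuine linear damping with coefficient bounded below independently of $\rho$; a finer split over $B^n_{\epsilon}(x)$ and its complement (as after (\ref{leftgrowth})) shows the damping even dominates the ``diffusion growth'' term (\ref{leftgrowth}) for small time. For the nonlinear terms one writes them in the form (\ref{rhof}), splits the $y$-integral over $B_{\rho^r}(x)$ with $r\in(1/3,1/2)$ and its complement: on the complement the Gaussian bound gives $O(\rho^{1-r})$, or more sharply as in (\ref{pointupper1*}); on the ball one uses the antisymmetry $G_{,j}=\tfrac{(x_j-y_j)}{\sigma-s}G^{*}$ to rewrite the integral as $\int_{B^{x_j\ge y_j}_{\rho^r}(x)}\big(f_i(s,y)-f_i(s,y^{-j})\big)\cdots$ and invokes continuity of the ``data'' $f_i$ (a product of $u^{\rho,l,m}$ and its derivatives, bounded with its derivatives by the inductive bound) to make this $O(\rho^{1+\delta_0}\epsilon)$ as in (\ref{epsilonest}). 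Since the damping term carries no $\rho$, choosing $\rho$ (and, in the localized variant, $\lambda$) small forces the total nonlinear plus diffusion increment below the damping, which is (\ref{subschemem}). Viscosity damping (the Plancherel estimate (\ref{fourier}), which offsets growth of order $(\Delta')^2$ for $a=(\Delta')^{s}$, $s\in(1/3,1/2)$, with $\rho=\Delta'$) may be added to sharpen this, but the linear bound already follows without it.

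Finally, propagating (\ref{subschemem}) over all substeps $m$ gives $\sup_m|u^{\rho,l,m}_i(m,\cdot)|_{C^q\cap H^q}\le C$ with $C$ independent of $l$, whence via $v^{\rho,l}_i(\tau,\cdot)=(1+\tau)u^{\rho,l}_i(\sigma,\cdot)$ the bound $|v^{\rho,l}_i(\tau,\cdot)|_{C^q\cap H^q}\le C(1+\tau)$ for all $l$ and $\tau\in[l-1,l]$. Because the contraction step size and the preservation argument depend only on the fixed constant $C$ (the data bound of the \emph{comparison} function, not of $v$), and because (\ref{pointupper1*})--(\ref{epsilonest}) show $\rho\gtrsim T^{-s}$ with $s\in(r,1)$ suffices over a horizon $T$, one concludes by contradiction: if $T_{\max}$ were the supremum of times with $|v_i(T,\cdot)|_{H^q\cap C^q}\le C(1+T)$, the scheme would extend the bound past $T_{\max}$, a contradiction; hence (\ref{eq3}) holds for all $t\ge0$, and $v_i\in C^1((0,\infty),H^m\cap C^m)\cap C^0([0,\infty),H^m\cap C^m)$ by Theorem \ref{loccontr} and Remark \ref{initialrem}. \textbf{The main obstacle} is precisely the uniform smallness claimed for the antisymmetrized nonlinear increment: $f_i(s,y)-f_i(s,y^{-j})$ is controlled only by the modulus of continuity of $f_i$ on $B_{\rho^r}(x)$, which itself depends on the spatial derivatives of the comparison function; keeping this quantity $\le\epsilon$ \emph{uniformly} in $l$ and along the whole time axis — not merely for a single fixed step — is the delicate point, and it is here that the constraints on $\rho$, $\lambda$, the localization radius $R$, and the preserved bound $C$ must be made quantitatively consistent.
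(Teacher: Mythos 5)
Your proposal reproduces the paper's overall scaffolding exactly: local existence via contraction (Theorem \ref{loccontr} / Lemma \ref{lem1}), passage to the time-dilatated comparison functions $u^{\rho,l,m}_i$ carrying the $\rho$-free damping potential $\mu u^{\rho,l,m}_i$, preservation of the $H^m\cap C^m$ bound for $u$, transfer back to $v$ via $v^{\rho,l}_i=(1+\tau)u^{\rho,l}_i$, and the contradiction with a hypothetical maximal $t_{\max}$. Where you diverge in detail is in how you prove the preservation step (Lemma \ref{lem2}/\ref{lem3}). You take the route of Section 4, item (i): split the nonlinear integral over $B_{\rho^r}(x)$ and its complement, use the antisymmetry $G_{,j}=\tfrac{(x_j-y_j)}{\sigma-s}G^{*}$ on the ball and continuity of $f_i$ to reach (\ref{epsilonest}), and use the Gaussian tail bound to reach (\ref{pointupper1*}). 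The appendix's own proof of Lemma \ref{lem3} instead avoids antisymmetry entirely: it uses the locally integrable bound (\ref{gpj}) on $G_{p,j}$, Young's inequality, and the Sobolev product constant $C_m$ to obtain an explicit upper bound $\rho c^{\mu,t_0}nC_G\Delta^{1-\delta}C_m(1+nC_K)C^2$ for the nonlinear contribution, and the explicit lower bound $c_{\mu,t_0}\Delta^{\alpha}C/2$ for the damping, so that the admissible $\rho$ in (\ref{ac}) is written in closed form. Both variants appear in the paper, so your route is not substantively new, but the appendix version is the more quantitative one: it replaces the qualitative ``for each $\epsilon$ there is a $\rho$'' continuity argument by an inequality between explicit constants.

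The obstacle you flag at the end — that $f_i(s,y)-f_i(s,y^{-j})$ is controlled only by the modulus of continuity of $f_i$ and that this must hold uniformly over all time steps — is indeed the weak point of the antisymmetry route, and the appendix's Young/Sobolev estimate is precisely designed to bypass it: there one never quotes a modulus of continuity, only the $H^m\cap C^m$ bound $C$ that is being propagated. You should also be a little more cautious about the claim that ``the contraction step size and the preservation argument depend only on the fixed constant $C$.'' For the global factor $(1+\tau)$ in (\ref{uvtrans}) this is not quite true: $c_{\mu,t_0}=\tfrac{3\sqrt{3}}{12+8t_0}$ in (\ref{mu*}) decays and $c^{\mu}_{t_0}=\tfrac{3}{2}+t_0$ in (\ref{mu**}) grows, so the admissible $\rho$ from (\ref{ac}) degrades like $(1+t_0)^{-2}$ — worse than the advertised $\rho\gtrsim T^{-s}$, $s<1$, unless one also brings in the sharper estimates (\ref{pointupper1*})–(\ref{epsilonest}) with $k>0$. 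For the localized transformation (\ref{uvl0}) this $t_0$-dependence disappears entirely, and you do mention that alternative, but your write-up should make explicit that the uniform-in-$T$ conclusion rests on that localized variant (or on the power $k>0$ in (\ref{pointupper1*})), not on the global one.
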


\begin{rem}\label{thmrem}
For the global existence of a regular solution $v_i,~1\leq i\leq n$ with (\ref{ex}) it suffices to prove that
\begin{equation}\label{eq3rem1}
\forall T>0~\exists C>0~\forall 0\leq t\leq T:~{\big |}v_i(t,.){\big |}_{H^m}\leq C+Ct
\end{equation}
This is a weak interpretation of the statement in (\ref{eq3}). A stronger interpretation 
is to claim that 
there exists a constant $C>0$ such that for all $T>0$ we have
\begin{equation}\label{eq3rem2}
\exists C>0~\forall T>0~\forall 0\leq t\leq T:~{\big |}v_i(t,.){\big |}_{H^m}\leq C+Ct
\end{equation}
Both claims can be proved, but the first claim is certainly easier to prove and sufficient for global existence.
\end{rem}

\section{Proof of the theorem}

We mention that the method is quite general and uniqueness is not claimed by this method (therefore the formulation 'a' solution in the theorem above).  

\begin{lem}\label{lem1}(existence of local solutions)
For data $h_i\in H^m\cap C^m$ for all $1\leq i\leq n$, and with $m\geq 2$ there exists a small time horizon $\rho>0$ such that there is a local solution $v_i:[0,\rho]\times {\mathbb R}^n\rightarrow {\mathbb R},~1\leq i\leq n$ of the initial value problem (\ref{Navleray}), and such that
\begin{equation}
v_i\in C^1\left(\left(0,\rho\right],H^m\cap C^m\right)\cap C^0\left(\left[0,\rho \right], H^m\cap C^m\right)~\mbox{ for }1\leq i\leq n.
\end{equation}
\end{lem}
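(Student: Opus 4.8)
The plan is to prove Lemma \ref{lem1} by a standard fixed-point / contraction argument in the space $C^0\left([0,\rho],H^m\cap C^m\right)$, using exactly the local iteration scheme already set up in the main text (equations (\ref{Navleray4})--(\ref{Navleray6})) specialised to the first time step. Since the estimates needed are precisely those carried out in the proof of Theorem \ref{loccontr} (with $l=1$, $v^{\rho,0}_i(0,\cdot)=h_i$, and $q=m$), the work here is mostly bookkeeping: one records the mild (Duhamel) representation of the linearised iterates and checks that the relevant convolution operators are bounded on $H^m\cap C^m$ with operator norms that can be made $\leq \tfrac12$ by shrinking $\rho$.

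First I would fix $m\geq 2$ and set $C_0 := \max_{1\leq i\leq n}|h_i|_{H^m\cap C^m}$, choosing a target ball of radius, say, $C_0+1$ in $X_\rho := C^0\left([0,\rho],H^m\cap C^m\right)^n$. I would define the iterates $v^{\rho,m}_i$ as in (\ref{Navleray4}) with heat-semigroup representation as in (\ref{deltavrholrep}): each iterate is the free heat evolution of $h_i$ plus Duhamel terms involving convolution of the Burgers nonlinearity with $p^l_{,j}$ and convolution of the Leray term with $K_{n,i}$ composed with $p^l_{,j}$. The key analytic inputs are already isolated in the excerpt: $1_{B_1(0)}p_{,i}(\tau,\cdot)\in L^1$, $1_{{\mathbb R}^n\setminus B_1(0)}p_{,i}(\tau,\cdot)\in L^1\cap L^2$ (uniformly for $\tau$ in a bounded interval, with a harmless integrable-in-time singularity $|p_{,i}(\tau,\cdot)|_{L^1}\lesssim \tau^{-1/2}$), and $1_{B_1(0)}K_{n,i}\in L^1$, $1_{{\mathbb R}^n\setminus B_1(0)}K_{n,i}\in L^2$ for $n\geq 3$. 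Using Young's inequality together with the pointwise bounds available because $H^m\cap C^m\hookrightarrow C_b$ for $m\geq 2$, one bounds the $H^m\cap C^m$-norm of the Duhamel terms by $C(m,n,\nu)\,\rho^{1/2}\,(\sup_{[0,\rho]}|v|_{H^m\cap C^m})^2$, exactly as in the two displayed estimates after (\ref{oneest}) in the proof of Theorem \ref{loccontr}. Choosing $\rho$ small enough makes the iteration map both a self-map of the ball and a contraction (the difference estimate is the content of (\ref{Navleray6increment}) and the lines following it, giving contraction constant $\tfrac12$). Hence $v^{\rho,m}_i\to v_i$ in $X_\rho$, and the limit solves the integral (mild) form of (\ref{Navleray}) on $[0,\rho]$.

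Finally I would upgrade regularity: since $v_i\in C^0([0,\rho],H^m\cap C^m)$ and the Leray source term is (by the splitting of $K_{n,i}$ and another Young estimate) in $C^0([0,\rho],H^{m-1}\cap C^{m-1})$, parabolic regularity for the heat equation with bounded Hölder-continuous lower-order data gives $v_i\in C^1((0,\rho],H^m\cap C^m)\cap C^0([0,\rho],H^m\cap C^m)$ and $v_i\in C^{1,2}$ classically, so the mild solution is a classical solution; this is the same verification performed at the end of the proof of Theorem \ref{loccontr}. The initial trace $v_i(0,\cdot)=h_i$ holds by strong continuity of the heat semigroup on $H^m\cap C^m$ and the vanishing of the Duhamel terms as $\tau\downarrow 0$ (they are $O(\tau^{1/2})$). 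The only point requiring a little care — the ``main obstacle'' such as it is — is the time-integrability of the first spatial derivative of the fundamental solution: $|p_{,i}(\tau,\cdot)|_{L^1}$ blows up like $\tau^{-1/2}$ as $\tau\downarrow 0$, so the Duhamel estimates must be done with $\int_0^\tau (\tau-s)^{-1/2}\,ds = 2\tau^{1/2}<\infty$ rather than a crude $L^\infty$-in-time bound; this is exactly why the gained smallness factor is $\rho^{1/2}$ rather than $\rho$, and it is harmless for the contraction.
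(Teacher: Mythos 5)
Your proposal is correct and follows essentially the same fixed-point route as the paper, which proves the detailed local contraction result as Theorem \ref{loccontr} (iteration scheme (\ref{Navleray4})--(\ref{Navleray6}), Young's inequality with the $L^1/L^2$ splitting of $p^l_{,j}$ and $K_{n,i}$, and the $H^m\cap C^m\hookrightarrow C_b$ embedding) and then dismisses Lemma \ref{lem1} with the single sentence ``The lemma can be proved by a local contraction argument.'' Your remark that $|1_{B_1(0)}p^l_{,i}(\tau,\cdot)|_{L^1}\sim(\rho\nu\tau)^{-1/2}$ is not uniform in $\tau$, so that the smallness factor obtained after time-integration of the Duhamel terms is $\rho^{1/2}$ rather than $\rho$, is a correct and useful repair of an imprecision in the paper's constant $C_{pB_1}$, though it leaves the conclusion unchanged.
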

The lemma can be proved by a local contraction argument. It is clear that the lemma \ref{lem1} may be applied to the equation in (\ref{Navleray}) with data $v_i(t_0,.)\in H^m\cap C^m$ if we have such data at time $t_0\geq 0$. For convenience of a later transformation let a time transformed function $v^{\rho,t_0}_i,~1\leq i\leq n$ have the time dependence $\tau$ with
\begin{equation}\label{eq5}
t-t_0=\rho(\tau-t_0),~\mbox{ where }v^{\rho,t_0}_i(\tau,.)=v^{t_0}_i(t,.)
\end{equation}
for $t\in \left[t_0,t_0+\rho\right]$ and $\tau\in \left[t_0,t_0+1\right]$. Here $v^{t_0}_i$ is just the restriction of $v_i$ to the interval $[t_0,t_0+\rho]$. Especially we have $v^{t_0}_i(t_0,.)=v_i(t_0,.)$. This function $v^{\rho,t_0}_i,~1\leq i\leq n$ is a (hypothetical) solution of 
\begin{equation}\label{Navleray2a}
\left\lbrace \begin{array}{ll}
\frac{\partial v^{\rho,t_0}_i}{\partial \tau}-\rho\nu\sum_{j=1}^n \frac{\partial^2 v^{\rho,t_0}_i}{\partial x_j^2} 
+\rho\sum_{j=1}^n v^{\rho,t_0}_j\frac{\partial v^{\rho,t_0}_i}{\partial x_j}=\\
\\ \hspace{1cm}\rho\sum_{j,m=1}^n\int_{{\mathbb R}^n}\left( \frac{\partial}{\partial x_i}K_n(x-y)\right) \sum_{j,m=1}^n\left( \frac{\partial v^{\rho,t_0}_m}{\partial x_j}\frac{\partial v^{\rho,t_0}_j}{\partial x_m}\right) (\tau,y)dy,\\
\\
\mathbf{v}^{\rho,t_0}(t_0,.)=\mathbf{v}(t_0,.),
\end{array}\right.
\end{equation}
on the domain $[t_0,t_0+1]\times {\mathbb R}^n$, and where time $\tau\in [t_0,t_0+1]$. Now by lemma \ref{lem1} for some $t_0>0$ we have $v^{\rho,t_0}_i(t_0,.)\in H^m\cap C^m,~1\leq i\leq n$ for $m\geq 2$ if $h_i\in H^m\cap C^m$. In order to observe local growth we compare for given $t_0\geq 0$ and for a $1\leq i\leq n$ the function $v^{\rho,t_0}_i$ at times $\tau\in [t_0,t_0+1)$ with a function $u^{\rho,t_0}_i$ which are related by
\begin{equation}\label{uv0}
(1+\tau )u^{\rho,t_0}_i(\sigma,x)=v^{\rho,t_0}_i(\tau ,x),
\end{equation}
where 
\begin{equation}\label{sigma}
\sigma=\frac{\tau-t_0}{\sqrt{1-(\tau-t_0)^2}}.
\end{equation}
Note that $\tau\in [t_0,t_0+1)$ corresponds to $\sigma\in [0,\infty)$. If it is convenient we compare both functions also on finite horizons, where for some $c_e\in (0,1)$ we consider $v^{\rho,t_0}_i(\tau,.)$ for $\tau\in [t_0,t_0+c_e]$ and compare with $u^{\rho,t_0}_i(\sigma,.)$ for $\sigma \in [0,c_{\sigma_e}]$, and where $c_e$ and $c_{\sigma_e}$ are related by (\ref{sigma}), i.e.,
\begin{equation}
c_{\sigma_e}=\frac{c_e-t_0}{\sqrt{1-(c_e-t_0)^2}}.
\end{equation}

\begin{rem}
We mention that the global factor in (\ref{uv0}) may be localized in the sense that
\begin{equation}\label{uv*}
(1+(\tau -t_0) )u^{\rho,t_0}_i(\sigma,x)=v^{\rho,t_0}_i(\tau ,x)
\end{equation}
is a variation of a scheme. This may simplify some steps as the coefficients $\mu^{\tau,k};~k=1,2$ are then bounded for all $\tau$, but we shall see that (\ref{uv0}) is also sufficient in order to prove global regular existence.
\end{rem}
The following argument can be performed in various forms- the argument by the function $u^{\rho,t_0}_i,~1\leq i\leq n$ defined in (\ref{uv0}) is just one possibility. Alternatively, in order avoid trivial confusions concerning the step sizes $\rho >0$, we consider a similar argument for functions without upperscript $\rho$ defining for $\lambda >0$ the functions $u^{t_0}_i,~1\leq i\leq n$ by
\begin{equation}\label{lambdau}
\lambda(1+t)u^{t_0}_i(s,.)=v^{t_0}_i,~s=\frac{t-t_0}{\sqrt{1-(t-t_0)^2}},
\end{equation}
where $\lambda>0$ is small. In the latter case the problem for $v^{t_0}_i,~1\leq i\leq n$, i.e., a problem of form (\ref{Navleray}) with data $v^{t_0}_i(t_0,.)=v_i(t_0,.)$ transforms then to
\begin{equation}\label{Navlerayuut0}
\left\lbrace \begin{array}{ll}
\frac{\partial u^{t_0}_i}{\partial \sigma}-\mu^{t,1}\nu\sum_{j=1}^n \frac{\partial^2 u^{t_0}_i}{\partial x_j^2} 
+\mu^{t,2}\sum_{j=1}^n u^{t_0}_j\frac{\partial u^{t_0}_i}{\partial x_j}+\mu u^{t_0}_i=\\
\\  \mu^{t,2}\sum_{j,r=1}^n\int_{{\mathbb R}^n}\left( \frac{\partial}{\partial x_i}K_n(x-y)\right) \sum_{j,r=1}^n\left( \frac{\partial u^{t_0}_r}{\partial x_j}\frac{\partial u^{t_0}_j}{\partial x_r}\right) (\sigma,y)dy,\\
\\
\mathbf{u}^{
t_0}(0,.)=\frac{1}{\lambda(1+t_{0})}\mathbf{v}^{t_0}(t_0,.),
\end{array}\right.
\end{equation}
where $\mu^{t,1}=\sqrt{1-(t-t_0)^2}^3$, $\mu=\frac{\sqrt{1-(t-t_0)^2}^3}{1+t}$, and  $\mu^{t,2}=\lambda(1+t)\sqrt{1-(t-t_0)^2}^3$. You observe that for small $\lambda$ the damping term is dominant for a small time horizon. If we want to have the existence of the constant $C>0$ independent of the time horizon $T>0$ we can replace $\lambda(1+t)$ in (\ref{lambdau}) above by $\lambda(1+(t-t_0))$. This also leads to the opportunity to define a constructive version of following the argument which we consider in remark \ref{2833} below. This leads to several independent proofs of the claim, where we observe also some interconnections. In this respect note that we have for some $c^{\lambda}>0$ and all $0\leq s\leq c^{\lambda}$ and corresponding $\sigma\in [0,c^{\sigma}_e]$ that
$$\lambda(1+t)u^{t_0}_i(s,.)=v^{t_0}_i(t,.)=v^{\rho,t_0}_i(\tau,.)=(1+\tau)u^{\rho,t_0}_i(\sigma,.).$$
In the following we may use the abbreviation $u^{\lambda,t_0}_i,~1\leq i\leq n$, where for all $s\in [0,c^{\lambda}]$ and corresponding $\sigma\in [0,c^{\sigma}_e]$ $$u^{\lambda,t_0}_i(s,.):=\lambda\frac{1+t}{1+\tau}u^{t_0}_i(s,.)=u^{\rho,t_0}_i(\sigma,.).$$ 
Here, recall from above that $\sigma\in [0,c^{\sigma}_e]$ corresponds to $\tau\in [t_0,t_0+c_e]$ and to $t\in [t_0,t_0+\rho c_e]$ by the relation $t-t_0=\rho(\tau-t_0)$.
It is clear that for $\rho >0$ small enough we have $\rho c^{\sigma}_e\leq c^{\lambda}$ such that in the following we may assume that
for all $s\in [0,\rho c^{\sigma}_e]$ and corresponding $\sigma\in [0,c^{\sigma}_e]$ $$u^{\lambda,t_0}_i(s,.)=u^{\rho,t_0}_i(\sigma,.).$$ 
We denote the inverse of the time dilatation by $\tau\equiv \tau(\sigma)$ (easily computed explicitly). Note that
\begin{equation}
\frac{\partial}{\partial \tau}v^{\rho,l}_i(\tau ,x)=u^{\rho,t_0}_i(\sigma,x)+(1+\tau)\frac{\partial}{\partial \sigma}u^{\rho,t_0}_i(\sigma,x)\frac{d \sigma}{d \tau},
\end{equation}
where for $\tau\in \left[ t_{0},t_0+1\right) $ we have
\begin{equation}
\begin{array}{ll}
\frac{d\sigma}{d \tau}=\frac{1}{\sqrt{1-\left( \tau-t_{0}\right) ^2}^3}.
\end{array}
\end{equation}
The equation for $u^{\rho,t_0}_i,~1\leq i\leq n$ is of the form
\begin{equation}\label{Navlerayu2}
\left\lbrace \begin{array}{ll}
\frac{\partial u^{\rho,t_0}_i}{\partial \sigma}-\rho\mu^{\tau,1}\nu\sum_{j=1}^n \frac{\partial^2 u^{\rho,t_0}_i}{\partial x_j^2} 
+\rho\mu^{\tau,2}\sum_{j=1}^n u^{\rho,t_0}_j\frac{\partial u^{\rho,t_0}_i}{\partial x_j}+\mu u^{\rho,t_0}_i=\\
\\ \rho \mu^{\tau,2}\sum_{j,r=1}^n\int_{{\mathbb R}^n}\left( \frac{\partial}{\partial x_i}K_n(x-y)\right) \sum_{j,r=1}^n\left( \frac{\partial u^{\rho,t_0}_r}{\partial x_j}\frac{\partial u^{\rho,t_0}_j}{\partial x_r}\right) (\sigma,y)dy,\\
\\
\mathbf{u}^{\rho,
t_0}(0,.)=\frac{1}{1+t_{0}}\mathbf{v}^{\rho,t_0}(t_0,.),
\end{array}\right.
\end{equation}
where for $k\in \left\lbrace 0,1,2 \right\rbrace $
\begin{equation}\label{mu}
\mu =\mu(\sigma)=\frac{\sqrt{1-\tau^2_{t_0}(\sigma)}^3}{1+\tau(\sigma)},~\mu^{\tau, k}:=(1+\tau(\sigma))^k\mu.
\end{equation}
Here we indicate that under the squareroot we have the shifted $\tau_{t_0}=\tau-t_0$. Note that in a localized version we have a shifted $\tau$ everywhere, but this is not needed for an analytical proof.
Note that $\sigma\in [0,\infty)$ as $\tau\in \left[t_{0},t_0+1\right)$. Next we assume that for some $m\geq 2$ and some $t_0\geq 0$ we have
\begin{equation}\label{vbound}
{\big |}v^{\rho,t_0}_i(t_0,.){\big |}_{H^m\cap C^m}\leq \left(1+t_0\right) C
\end{equation}
for some constant $C>0$. This means that for some constant $C>0$ we have 
\begin{equation}\label{ubound}
{\big |}u^{\rho,t_0}_i(0,.){\big |}_{H^m\cap C^m}\leq  C.
\end{equation}
Now the equation in (\ref{Navlerayu2}) has a strong damping term $\mu u^{\rho,t_0}_i$ (without a possibly small $\rho$), while all spatial derivative terms in (\ref{Navlerayu2}) have this factor $\rho$.    This leads to a preservation of an upper bound by $u^{\rho,t_0}$ if $\rho>0$ is small enough. 
\begin{rem}
Note that it may look as if we consider only a time effect but we use some spatial structure of the operator here. However, representations as in the proof of Lemma \ref{lem2} show the spatial effects. Higher order derivatives in the convolution representations may always involve a first order spatial derivative of the Gaussian. This is obvious for representations of the derivatives of the value functions. However, as all the nonlinear terms involve at least one first order derivative the value function itself may be represented in terms of convolutions involving Gaussian first order derivatives.
\end{rem} 
The following Lemma \ref{lem2} is formulated in a rather strong fashion (stronger than needed in order to prove the existence of regular linear upper bounds).
\begin{lem}\label{lem2}
Assume that $t_0\geq 0$ and $\tilde{\rho}>0$ are such that Lemma \ref{lem1} holds for $v^{\tilde{\rho},t_0}_i$ with $1\leq i\leq n$, and such that (\ref{vbound}) holds with $\tilde{\rho}$. Then there is some $0<\rho\leq \tilde{\rho}$ (which does not depend essentially on the time horizon $T>0$) such that
\begin{equation}\label{claimlemma2}
{\big |}u^{\rho,t_0}_i(0,.){\big |}_{H^m\cap C^m}\leq  C\Rightarrow {\big |}u^{\rho,t_0}_i(\sigma,.){\big |}_{H^m\cap C^m}\leq  C \mbox{ for all $\sigma \geq 0$.}
\end{equation}
\end{lem}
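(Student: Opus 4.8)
\textbf{Proof strategy for Lemma \ref{lem2}.} The plan is to work on the equation (\ref{Navlerayu2}) for $u^{\rho,t_0}_i$ directly, using the local existence from Lemma \ref{lem1} (which holds a fortiori for the damped equation, since the extra term $\mu u^{\rho,t_0}_i$ only helps), and to establish the preservation of the bound by a continuity (bootstrap) argument on the Duhamel representation. First I would subdivide the half-line $\sigma\in[0,\infty)$ into unit intervals $[m-1,m]$ and reduce the claim to: if ${\big |}u^{\rho,t_0}_i(m-1,.){\big |}_{H^m\cap C^m}\leq C$ then ${\big |}u^{\rho,t_0}_i(\sigma,.){\big |}_{H^m\cap C^m}\leq C$ for $\sigma\in[m-1,m]$; iterating then gives the global bound, the subtlety of $\sum_m\rho_m<\infty$ being absorbed by the contradiction argument already indicated in the text. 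On each such interval $\tau\in[t_0,t_0+1)$ is bounded, hence the coefficients $\mu^{\tau,k}$, $k=0,1,2$, are bounded by a constant depending only on $t_0$ (and only weakly on the time horizon once the sharper $\rho\gtrsim 1/T^s$ estimate is invoked). For $1\le|\alpha|\le m$, with $\alpha=\beta+1_j$, I use the representation (\ref{Navlerayusubschemeprei}) in terms of the fundamental solution $G$ of $G_{,\sigma}-\rho\nu\mu^{\tau,1}\Delta G=0$; for $|\alpha|=0$ I use (\ref{Navlerayusubschemepre2i}) after integrating by parts the Burgers term and performing a local integration of the Laplacian kernel, so that in a large ball $B^n_R(x)$ the Leray term also carries a first spatial derivative $G_{,i}$ of the Gaussian.

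Next I estimate each term in these representations. The initial-data term $\int_{{\mathbb R}^n}D^\alpha_x u^{\rho,t_0}_i(m-1,y)G(\sigma,x;m-1,y)\,dy$ is a convolution against a probability density, hence $H^m\cap C^m$-bounded by $C$; more precisely, splitting into a ball $B^n_\epsilon(x)$ and its complement exactly as in item (i) of Section 4, the deviation from $D^\alpha_x u^{\rho,t_0}_i(m-1,x)$ is $o(\sigma-(m-1))$ as $\sigma\downarrow m-1$, and by the viscosity-damping Fourier computation (the Lemma stated near the end of Section 4, with $a=(\Delta')^s$, $s\in(1/3,1/2)$) any residual growth of order $(\sigma-(m-1))^2$ is offset. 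The damping term $-\int_{m-1}^\sigma\int_{{\mathbb R}^n}\mu(s)u^{\rho,t_0}_{i,\alpha}(s,y)G(\sigma,x;s,y)\,dy\,ds$ carries no factor $\rho$ and, because $G$ is close to a Dirac mass for small $\rho$, is close to $-\mu(m-1)D^\alpha_x u^{\rho,t_0}_i(m-1,x)(\sigma-(m-1))$, i.e.\ a genuine reduction of the norm whenever the value is close to $C$. The two nonlinear terms carry a factor $\rho$ and bounded $\mu^{\tau,2}$, and I control them by the spatial antisymmetry of $G_{,j}$: writing $G_{,j}=\frac{(x_j-y_j)}{\nu\mu^{\tau,1}(s)(\sigma-s)}G^{*}$ and splitting into $B_{\rho^r}(x)$ and its complement, the ball part is estimated via the pairing trick (\ref{ulmrep*333}) by $\rho^{1+\delta_0}C_4(\sigma-(m-1))$ times the modulus of continuity of the (bounded, continuous, $C$-controlled) ``data'' $f_i$, and the complementary part decays exponentially in $\rho^{-(1-2r)}$ as in (\ref{pointupper1*}). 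Hence the total nonlinear contribution is $O(\rho^{1+\delta_0})$ per unit $\sigma$-interval, strictly smaller than the damping reduction for small $\rho$.

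To close, I run a continuity argument: let $\sigma^{*}$ be the supremum of times in $[m-1,m]$ up to which ${\big |}u^{\rho,t_0}_i(\cdot,.){\big |}_{H^m\cap C^m}\le C$; on $[m-1,\sigma^{*}]$ the representation together with the three estimates above shows the norm is in fact $\le C-c(\sigma-(m-1))+O(\rho^{1+\delta_0})(\sigma-(m-1))<C$ for $\rho$ small, contradicting maximality unless $\sigma^{*}=m$. For the value-function component one argues identically inside $B^n_R(x)$ and uses the Gaussian decay of $G$ (and of $K_{n,i}\star$) outside, with the boundary terms made negligible by choosing $R$ large and $\rho$ small. The choice of $\rho$: since the nonlinear growth is $O(\rho^{1+\delta_0})$ rather than $O(\rho)$, even though the damping coefficient is only of order $1/T$ under the global factor $(1+\tau)$, it suffices to take $\rho\gtrsim 1/T^{s}$ for some $s\in(r,1)$, which is the precise meaning of ``not essentially dependent on the time horizon.''

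\textbf{Main obstacle.} The delicate point is not any single estimate but the quantitative balance: one must show the nonlinear increment is \emph{strictly dominated} by the damping decrement for \emph{every} size of the value function up to $C$, uniformly in the time step, which is exactly where the first-order spatial derivative in both nonlinear terms (hence the antisymmetry of $G_{,j}$ and the factor $\rho^{1+\delta_0}$) is indispensable — a plain energy estimate would only give $O(\rho C^2)$ and fail. The secondary technical nuisance is the $|\alpha|=0$ case, where the Leray kernel $K_n$ rather than its derivative forces the $B^n_R(x)$/complement decomposition and the control of the boundary terms, and the bookkeeping needed to keep all constants independent of $t_0$.
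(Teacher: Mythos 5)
Your proposal follows the same overall strategy as the paper's proof, but the route differs in two respects worth noting. The paper proves Lemma \ref{lem2} formally via Lemma \ref{lem3} in Section 6.2, which is a \emph{discrete} subscheme induction: one introduces the nested iteration $u^{\rho,t_0,p,q}_i$ on subintervals $[(p-1)\Delta,p\Delta]$, estimates the Duhamel representation (\ref{Navlerayusubschemesubrep}), and closes the induction via the explicit inequality (\ref{ac}) on $\rho$; crucially, the nonlinear terms there are bounded by the crude singular-kernel estimate ${\big |}G_{p,j}{\big |}\lesssim |\sigma_p-s|^{-\delta}|x-z|^{-(n+1-2\delta)}$, yielding a bound of the form $\rho\, c^{\mu,t_0}\,nC_G\,\Delta^{1-\delta}C_m(1+nC_K)C^2$, and the damping decrement is $-c_{\mu,t_0}\Delta^{\alpha}C/2$. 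You instead run a \emph{continuity (bootstrap)} argument on unit $\sigma$-intervals and import the finer pairing/antisymmetry estimate (\ref{ulmrep*333}), (\ref{epsilonest}), (\ref{pointupper1*}) from Section 4, getting an $O(\rho^{1+\delta_0})$ bound for the nonlinear increment. Both close the preservation step, so your approach is legitimate. The meaningful payoff of your choice is precisely the parenthetical in the lemma: the paper's Lemma \ref{lem3} bound gives $\rho\lesssim c_{\mu,t_0}/(c^{\mu,t_0}C^2)\sim 1/T^2$, since $c_{\mu,t_0}\sim 1/(1+t_0)$ and $c^{\mu,t_0}\sim 1+t_0$, which is \emph{not} of the required form $\rho\gtrsim 1/T^s$ with $s\in(0,1)$; the paper itself acknowledges this in the footnote preceding the appendix and defers the sharper claim to the Section-4 antisymmetry analysis, exactly the one you invoke. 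So your proposal is closer to what the stated lemma actually requires than the paper's own Lemma \ref{lem3} proof, which establishes only a weaker (time-horizon-dependent) version and relies on the footnote's supplement.

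Two minor caveats. First, your statement that ``a plain energy estimate would only give $O(\rho C^2)$ and fail'' is overstated: the paper's Lemma \ref{lem3} \emph{does} use essentially such an estimate and succeeds at proving the basic preservation on a bounded time interval; what fails is only the ``not essentially dependent on $T$'' refinement. Second, your bookkeeping of the $\rho$-versus-$T$ balance is as optimistic as the paper's own: the antisymmetry bound is $\rho^{1+\delta_0}\cdot\epsilon(\rho)$ with $\epsilon$ the modulus of continuity of $f_i$ over a ball of radius $\rho^r$, and to actually land in $s\in(r,1)$ (rather than $s$ between $4/3$ and $2$) one has to combine this with the arbitrary-$k$ powers from (\ref{pointupper1*}) as the paper asserts; this is a point you assert rather than verify, but you do no worse than the source.
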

\begin{rem}\label{loc}
Clearly if (\ref{vbound}) holds for $\tilde{\rho}$ then it holds for $\rho\leq \tilde{\rho}$ and has the consequence (\ref{ubound}) for this $\rho>0$. Furthermore it is sufficient to prove Lemma \ref{lem2} for a finite interval of times $\sigma\in [0,T]$ for some $T\geq 1$ for example. It is also sufficient and easier to prove Lemma \ref{lem2} for discrete times $\sigma$  but the  sharper result can be obtained also.
\end{rem}
\begin{rem}
The phrase '$\rho$ does not depend essentially on the time horizon $T$' means $\rho \gtrsim \frac{1}{T^s}$ foe some $s\in (0,1)$.
\end{rem}

Now for contradiction assume that there is a maximal $t_{max}$ such that a solution $v_i$ with $v_i\in C^1\left( (0,t_{max}]\times H^m\cap C^m\right) \cap C^0\left( [0,t_{max}]\times H^m\cap C^m\right)$ for $1\leq i\leq n$ with a linear upper bound as in (\ref{eq3}) exists, i.e., assume that $t_{max}>0$ is maximal such that
\begin{equation}\label{vboundorgm}
{\big |}v_i
(t,.){\big |}_{H^m\cap C^m}\leq \left(1+t\right) C~\mbox{ for all }t\leq t_{max}.
\end{equation}
for some constant $C>0$ (which depends only on $\nu,n,h_i,~1\leq i\leq n$ and an arbitrary horizon $T>t_{max}$).  
According to our definitions above we have 
\begin{equation}
v_i(t_{max},.)=v^{t_{max}}_i(t_{\max},.)=v^{\rho,t_{max}}_i(t_{max},.)
\end{equation}
 such that for
(\ref{vboundorgm}) we have
\begin{equation}\label{vboundm}
{\big |}v^{\rho,t_{max}}_i
(t_{max},.){\big |}_{H^m\cap C^m}\leq \left(1+t_{max}\right) C
\end{equation}
for all $1\leq i\leq n$ for the same constant $C>0$. According to (\ref{uv0}) and (\ref{sigma}) with $t_0=t_{max}=\tau$ this implies that
\begin{equation}\label{uvtmax}
 {\big |}u^{\rho,t_{\max}}_i(0,.){\big |}_{H^m\cap C^m}
=\frac{{\big |}v^{\rho,t_{max}}_i(t_{max} ,.){\big |}_{H^m\cap C^m}}{1+t_{max}}\leq C.
 \end{equation}
Then according to Lemma \ref{lem1} (applied to data $v^{t_{max}}_i(t_{max},.)=v^{\rho,t_{max}}_i(t_{max},.)\in H^m\cap C^m$ for all $1\leq i\leq n$) and 
Lemma \ref{lem2} (resp. Lemma \ref{lem2*}) there exists a $\rho>0$ and a constant $0<c_e< 1$ such that 
$v^{\rho,t_{max}}_i(\tau,.)\in H^m\cap C^m$ exists for $\tau\in [t_{max},t_{max}+c_e]$ for some constant $c_e>0$ and such that
\begin{equation}\label{uboundm}
{\big |}u^{\rho,t_{max}}_i(\sigma,.){\big |}_{H^m\cap C^m}\leq  C~(\mbox{resp.}~{\big |}u^{t_{max}}_i(s,.){\big |}_{H^m\cap C^m}\leq  C)
\end{equation}
for all $\sigma\in \left[0,c_{\sigma_e} \right] $ (resp. $s\in \left[0,\rho c_{\sigma_e} \right] $ ), and where $c_{\sigma_e}$ is the constant in $\sigma$-coordinates corresponding to the constant $c_e$ in $\tau$-coordinates. 
Now given $t_0\geq 0$ consider the function $u^{t_0}_i,~1\leq i\leq n$, where for all $\sigma \in [0,c_{\sigma_e}]$ and corresponding $s\in [0,\rho c_{\sigma_e}]$ and all $x\in {\mathbb R}^n$ we have (according to the definitions of $u^{t_0}_i,u^{\lambda,t_0}_i$ above)
\begin{equation}\label{relussigma}
 u^{\lambda, t_0}_i(s,x)=u^{\rho,t_0}_i(\sigma,x).
\end{equation}
Now (\ref{uboundm}) and (\ref{relussigma}) for $t_0=t_{max}$ implies that for all $s\in [0,\rho c_{\sigma_e}]$ and corresponding $\sigma \in [0,c^{\sigma}_e]$ we have
\begin{equation}\label{uboundmt}
{\big |}u^{\lambda,t_{max}}_i(s,.){\big |}_{H^m\cap C^m}={\big |}u^{\rho,t_{max}}_i(\sigma,.){\big |}_{H^m\cap C^m}\leq  C.
\end{equation}
There are several variations of arguments which are independent or partial independent. Note that the first equation in (\ref{relussigma}) is only needed if we use only lemma \ref{lem2} and intend to transfer results for functions in $\sigma$- and $\tau$-coordinates (i.e. results functions with upperscript $\rho$) to functions in $s$ and $t$-coordinates. The simplest variation uses just Lemma \ref{lem2*} and then the right inequality in (\ref{uboundm}). Let us consider this variation of argument first.
Then from the right inequality of (\ref{uboundm}) we have for $s\in \left[0,c^e_{\sigma} \right]$ and corresponding $t\in \left[t_{max},t_{max}+\rho c_e \right]$ that
\begin{equation}
{\big |}v^{\lambda,t_{max}}_i(t,.){\big |}_{H^m\cap C^m}=(1+t) {\big |}u^{\lambda,t_{max}}_i(s,.){\big |}_{H^m\cap C^m}\leq (1+t)C,
\end{equation}
such that the statement in (\ref{eq3}) of the main theorem is proved. 
Alternatively, if we want to use transfer results for functions with upperscript $\rho$ to functions with $s$ and $t$ coordinates, then we start indeed with (\ref{uboundmt}).
First, note that for $s=0$ we have $t=\tau=t_{max}$ and
\begin{equation}
u^{\lambda,t_{max}}_i(0,.):=\lambda\frac{1+t_{max}+(t-t_{max})|_{t=t_{max}}}{1+t_{max}+(\tau-t_{max})|_{t=t_{max}}}u^{t_{max}}_i(0,.)=u^{t_{max}}_i(0,.),
\end{equation}
which implies that for $C'=C/\lambda$ we have 
\begin{equation}
{\big |}u^{t_{max}}_i(0,.){\big |}_{H^m\cap C^m}\leq C'.
\end{equation}
In this case we may apply the relation (\ref{claimlemma2}) in Lemma 2.8. with $C'=C/\lambda$ and have certainly for $s\in [0,c^{\sigma}_e]$ that
\begin{equation}
{\big |}u^{t_{max}}_i(s,.){\big |}_{H^m\cap C^m}\leq C'.
\end{equation}
Hence for $s\in [0,\rho c^{\sigma}_e]$ and corresponding $t\in \left[ t_{max},t_{max}+\rho c_e\right] $ we have
\begin{equation}
C(1+t)=\lambda (1+t)C'\geq \lambda (1+t){\big |}u^{t_{max}}_i(s,.){\big |}_{H^m\cap C^m}={\big |}v^{t_{max}}_i(t,.){\big |}_{H^m\cap C^m}.
\end{equation}
This variation of argument uses the relation (\ref{relussigma}) only for $s=0=\sigma$, where further information is available in remark \ref{2425}.  A third variation is to use the other inequality of  (\ref{relussigma}), i.e. the upper bound for $u^{\rho,t_{max}}_i(s,.)$ with respect to the $H^m\cap C^m$-norm. This latter variation   is considered in remark \ref{uvtt} below.
All these variations imply that there exists some $C>0$ such that
\begin{equation}\label{vtmax}
{\big |}v^{t_{max}}_i(t,.){\big |}_{H^m\cap C^m}\leq \left(1+t\right) C
\end{equation}
for all $t_{max}\leq t\leq t_{max}+\rho c_e$. Then (\ref{vboundorgm}) together with (\ref{vtmax}) lead to a contradiction to the maximality of $t_{max}$ in (\ref{vboundorgm}).

Further remarks concerning the transition from (\ref{uboundm}) or (\ref{uboundmt}) to (\ref{vtmax}) may be helpful: we explain a) in remark \ref{uvtt} the transition from (\ref{uboundmt}) to (\ref{vtmax}) in the weak interpretation of (\ref{eq3rem1}) (which is sufficient for global regular existence), and b) in remark \ref{2425} we add further remarks for the scheme for $u^{t_0}_i$. Finally c) we consider in remark \ref{2833} a proof for a variation of a scheme which leads also to a constructive version of the proof.    
\begin{rem}\label{uvtt}
In order to prove the weaker statement of remark \ref{thmrem} that for a solution $v_i,~1\leq i\leq n$ of the incompressible Navier Stokes equation we have
\begin{equation}\label{eq3rem1*}
\forall T>0~\exists C>0~\forall 0\leq t\leq T:~{\big |}v_i(t,.){\big |}_{H^m}\leq C+Ct
\end{equation}
we consider again the statement (\ref{uv0}) for $t_0=t_{max}$. More precisely we consider the weaker statement that for some $0<\rho <1$ and some $c_e\in (0,1)$ and $\tau\in \left[t_{max},t_{max}+c_e \right]$ and corrresponding $\sigma\in \left[0,c_{\sigma_e}\right]$ we have
\begin{equation}\label{tausigmaobs}
\left(1+\tau \right)u^{\rho,t_{max}}_i(\sigma,.)=v^{\rho,t_{max}}_i(\tau,.).
\end{equation}
Note that for $\tau=t_{max}$ and corresponding $\sigma=0$ we have
\begin{equation}
\left(1+t_{max} \right)u^{\rho,t_{max}}_i(0,.)=v^{\rho,t_{max}}_i(t_{max},.),
\end{equation}
which implies that for some $C>0$ we have
\begin{equation}
{\big |}u^{\rho,t_{\max}}_i(0,.){\big |}_{H^m\cap C^m}=\frac{1}{1+t_{max}}{\big |}v^{t_{\max}}_i(t_{max},.){\big |}_{H^m\cap C^m}\leq C.
\end{equation}

For this $0<\rho <1$ (and also all smaller $\rho >0$) using $\rho (\tau-t_{max})=t-t_{max}$ we get some $c_e\in (0,1)$ and $t\in \left[t_{max},t_{max}+\rho c_e \right]$ and corresponding $\sigma\in \left[0,c_{\sigma_e}\right]$ we have
\begin{equation}\label{locrel}
\left(1+t_{max}+\frac{t-t_{max}}{\rho} \right)u^{\rho,t_{max}}_i(\sigma,.)=v^{\rho,t_{max}}_i\left( t_{max}+\frac{t-t_{max}}{\rho},.\right).
\end{equation}
Hence there is no simple transition to (\ref{vtmax}) with the same $C>0$. From (\ref{locrel}) and the definition of the function $v^{t_{max}}_i$ we then have that for some $0<\rho <1$ that for all $\sigma \in \left[0,c_{\sigma_e}\right]$ and corresponding  $t\in \left[t_{max},t_{max}+\rho c_e \right]$ we have 
\begin{equation}\label{locrel***}
\begin{array}{ll}
\left(1+t \right)\frac{1}{\rho}{\big |}u^{\rho,t_{max}}_i(\sigma,.){\big |}_{H^m\cap C^m}\geq {\Big |}v^{\rho,t_{max}}_i\left( t_{max}+\frac{t-t_{max}}{\rho},.\right){\Big |}_{H^m\cap C^m}\\
\\
={\big |}v^{t_{\max}}_i(t,.){\big |}_{H^m\cap C^m}.
\end{array}
\end{equation}

The statement at equation (\ref{uboundm}) above holds for a $\rho >0$ which depends only on $C,n,\nu$ and $T$, i.e., for given $T>0$ there exists a $\rho>0$ which depends only on $C,n,\nu$ and $T$ such that for all $\sigma\in \left[0,c_{\sigma_e}\right]$ 
\begin{equation}
{\big |}u^{\rho,t_{max}}_i(0,.){\big |}_{H^m\cap C^m}\leq C\Rightarrow {\big |}u^{\rho,t_{max}}_i(\sigma,.){\big |}_{H^m\cap C^m}\leq C.
\end{equation}
Hence, this together with \ref{locrel***} implies that for 
$t\in \left[t_{max},t_{max}+\rho c_e \right]$ and corresponding $\sigma\in[0,c^{\sigma}_e]$ and $C'=C/\rho$ we have 
\begin{equation}\label{locrel**2}
\begin{array}{ll}
C'(1+t)\geq \left(1+t \right)\frac{1}{\rho}{\big |}u^{\rho,t_{max}}_i(\sigma,.){\big |}_{H^m\cap C^m}\geq {\Big |}v^{\rho,t_{max}}_i\left( t_{max}+\frac{t-t_{max}}{\rho},.\right){\Big |}_{H^m\cap C^m}\\
\\
={\big |}v^{t_{\max}}_i(t,.){\big |}_{H^m\cap C^m}.
\end{array}
\end{equation}
Hence the statement in (\ref{eq3}) of the main theorem holds with the constant $C'$. Here we use that $\rho>0$ can be chosen such that it depends on the time horizon $T>0$ but is independent of $t_{max}$. Hence we get (\ref{eq3rem1*}).

\end{rem}

\begin{rem}\label{2425}
We next explain that for the transition from (\ref{uboundmt}) to (\ref{vtmax}) we do not need the introduction of functions with upperscript $\rho>0$ but can introduce a small parameter $\tilde{\rho}$ in the nonlinear transformation alternatively. The parameter $\lambda >0$ is not essential in the following (but may be used for numerical purposes). Let $v^{t_0}_i$ solve
\begin{equation}\label{Navleray2aztmax}
\left\lbrace \begin{array}{ll}
\frac{\partial v^{t_{0}}_i}{\partial t}-\nu\sum_{j=1}^n \frac{\partial^2 v^{t_{0}}_i}{\partial x_j^2} 
+\sum_{j=1}^n v^{t_{0}}_j\frac{\partial v^{t_{0}}_i}{\partial x_j}=\\
\\ \hspace{1cm}\sum_{j,m=1}^n\int_{{\mathbb R}^n}\left( \frac{\partial}{\partial x_i}K_n(x-y)\right) \sum_{j,m=1}^n\left( \frac{\partial v^{t_{0}}_m}{\partial x_j}\frac{\partial v^{t_{0}}_j}{\partial x_m}\right) (t,y)dy,\\
\\
\mathbf{v}^{t_{0}}(t_{0},.)=\mathbf{v}(t_{0},.).
\end{array}\right.
\end{equation}
Define $u^{t_0}_i,~1\leq i\leq n$ for a given constant $\lambda >0$ via
\begin{equation}\label{complambda}
\lambda(1+t)u^{t_0}_i(s,.)=v^{t_0}_i(t,.)~\mbox{with}~s=\tilde{\rho}\frac{t-t_0}{\sqrt{1-(t-t_0)^2}}
\end{equation}
Then we have
\begin{equation}
\frac{\partial v^{t_0}}{\partial t}=\lambda u^{t_0}_i(s,.)+\lambda (1+t) u^{t_0}_{i,s}\frac{\tilde{\rho}}{\sqrt{1-(t-t_0)^2}^3}
\end{equation}
where the last factor is just $\frac{ds}{dt}$. 
The nonlinear terms then get another factor $\lambda$ while the damping term and the Laplacian are independent of $\lambda$, i.e., the function $u^{t_0}_i,~1\leq i\leq n$ satisfies the equation
\begin{equation}\label{Navlerayu22*}
\left\lbrace \begin{array}{ll}
\frac{\partial u^{t_{0}}_i}{\partial s}-\mu^{t,1}\nu\sum_{j=1}^n \frac{\partial^2 u^{t_{0}}_i}{\partial x_j^2} 
+\lambda\mu^{t,2}\sum_{j=1}^n u^{t_{0}}_j\frac{\partial u^{\lambda,t_{0}}_i}{\partial x_j}+\mu u^{t_{0}}_i=\\
\\ \lambda\mu^{t,2}\sum_{j,r=1}^n\int_{{\mathbb R}^n}\left( \frac{\partial}{\partial x_i}K_n(x-y)\right) \sum_{j,r=1}^n\left( \frac{\partial u^{t_0}_r}{\partial x_j}\frac{\partial u^{t_0}_j}{\partial x_r}\right) (s,y)dy,\\
\\
\mathbf{u}^{
t_0}(0,.)=\frac{1}{\lambda(1+t_{0})}\mathbf{v}^{t_{0}}(t_{0},.),
\end{array}\right.
\end{equation}
where
\begin{equation}\label{mu}
\mu=\frac{\sqrt{1-(t-t_0)^2}^3}{\tilde{\rho}(1+t)},~
\mu^{t,1} =\sqrt{1-(t-t_{0})^2}
\end{equation}
and
\begin{equation}\label{mut2}
\mu^{t,2} =\left(1+t\right)  \sqrt{1-(t-t_{0})^2}^3.
\end{equation}
Next assume that $t_{max}>0$ is maximal such that
\begin{equation}
\forall t\leq t_{max}:~{\big |}v^{t_{max}}_i(t,.){\big |}_{H^m\cap C^m}\leq C+Ct.
\end{equation}
Then from local comparison (\ref{complambda}) with $t_0=t_{max}$ we have 
\begin{equation}
\lambda (1+t_{max}){\big |}u^{t_{max}}_i(0,.){\big |}_{H^m\cap C^m}={\big |}v^{t_{max}}_i(t_{max},.){\big |}_{H^m\cap C^m}\leq C+C t_{max},
\end{equation}
where we have some freedom to choose $\lambda >0$. For some $\lambda >0$ (small enough and dependent on $C>0$) we get a Lemma analogous to Lemma \ref{lem2} in the sense that
\begin{equation}
{\big |}u^{t_{max}}_i(0,.){\big |}_{H^m\cap C^m}\leq \frac{C}{\lambda}\Rightarrow {\big |}u^{t_{max}}_i(s,.){\big |}_{H^m\cap C^m}\leq \frac{C}{\lambda}~\mbox{for }s\geq 0.
\end{equation}
Hence for some $c_{s_e}>0$ and $s\in \left[0,c_{s_e} \right]$ and corresponding $c_e>0$ and $t\in \left[t_{max},t_{max}+c_e \right]$ we have
\begin{equation}
{\big |}v^{t_{max}}_i(t,.){\big |}_{H^m\cap C^m}=\lambda (1+t){\big |}u^{t_{max}}_i(t,.){\big |}_{H^m\cap C^m}\leq \lambda (1+t)\frac{C}{\lambda}=C+C t.
\end{equation}
Then (\ref{vboundorgm}) together with (\ref{vtmax}) lead to a contradiction to the maximality of $t_{max}$ in (\ref{vboundorgm}). The analogous lemma could be formulated as follows 
\begin{lem}\label{lem2*}
Assume that $t_0\geq 0$ and $\tilde{\rho}>0$ are such that Lemma \ref{lem1} holds for $v^{t_0}_i$ with $1\leq i\leq n$ on the domain $[t_0,t_0+\tilde{\rho})$, and such that \begin{equation}\label{vbound*}
{\big |}v^{t_0}_i(t_0,.){\big |}_{H^m\cap C^m}\leq \left(1+t_0\right) C
\end{equation} 
holds. Then there is some $0<\rho\leq \tilde{\rho}$ and a $\lambda >0$ depending only on $\nu,n$ and $C>0$ such that for $s\in [0,\rho c^{\sigma}_e]$ (corresponding to $t\in [t_0,t_0+\rho)$) and $C'=C/\lambda$ we have
\begin{equation}\label{claimlemma2}
{\big |}u^{t_0}_i(0,.){\big |}_{H^m\cap C^m}\leq  C'\Rightarrow {\big |}u^{t_0}_i(s,.){\big |}_{H^m\cap C^m}\leq  C'.
\end{equation}
\end{lem}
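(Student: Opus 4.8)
The plan is to prove \eqref{claimlemma2} by a continuity (bootstrap) argument applied to the transformed equation \eqref{Navlerayu22*}, exploiting that after the normalisation the damping term $\mu u^{t_0}_i$ is the only term carrying no small factor $\lambda$, while the Burgers and Leray-projection terms each carry the factor $\lambda$ and, in the convolution representation, a first order spatial derivative of the Gaussian. First I would record that Lemma \ref{lem1} applied to the data $v^{t_0}_i(t_0,\cdot)\in H^m\cap C^m$ produces a local classical solution $v^{t_0}_i$ on $[t_0,t_0+\tilde\rho)$, and that the invertible change of variables \eqref{complambda} turns this into a local classical solution $u^{t_0}_i$ of \eqref{Navlerayu22*} on the corresponding $s$-interval; in particular $s\mapsto\max_i{\big|}u^{t_0}_i(s,\cdot){\big|}_{H^m\cap C^m}$ is finite and continuous there. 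With $C'=C/\lambda$ one has ${\big|}u^{t_0}_i(0,\cdot){\big|}_{H^m\cap C^m}\le C'$ from \eqref{vbound*} and the formula for $\mathbf u^{t_0}(0,\cdot)$. Define $s^\ast$ to be the supremum of those $s$ for which $\max_i{\big|}u^{t_0}_i(s',\cdot){\big|}_{H^m\cap C^m}\le C'$ for all $s'\le s$; by continuity $s^\ast>0$, and it suffices to show that a suitable choice of $\lambda$ (depending only on $\nu,n,C$) and $\rho\le\tilde\rho$ (not essentially dependent on the horizon) forces $s^\ast\ge\rho c^{\sigma}_e$.

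On $[0,s^\ast]$ I would write, for each $0\le|\alpha|\le m$ with $\alpha=\beta+1_j$ when $|\alpha|\ge1$, the Duhamel/Levy representation of $D^\alpha_x u^{t_0}_i$ against the fundamental solution $p^{t_0}$ of $\partial_s p-\mu^{t,1}\nu\Delta p=0$, exactly in the form \eqref{Navlerayusubschemepreapp} (and its $|\alpha|=0$ analogue \eqref{Navlerayusubschemepre2app}). The initial-data term $\int_{\mathbb R^n}D^\alpha_x u^{t_0}_i(0,y)G(\sigma,x;0,y)\,dy$ has $H^m\cap C^m$-norm bounded by $C'$ (convolution with a probability density, with a small genuine gain from viscosity as in the Remark following \eqref{ulmrep*}); all remaining terms must be shown to contribute at most $0$ net. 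The damping term $-\int_0^\sigma\int_{\mathbb R^n}\mu(s)u^{t_0}_{i,\alpha}(s,y)G(\sigma,x;s,y)\,dy\,ds$ is a convolution with the positive kernel $G$, so for small $\sigma$ it is close to $-\mu(0)\,\sigma\,D^\alpha_x u^{t_0}_i(0,x)$, and since $\mu$ carries neither $\lambda$ this yields a strictly negative contribution of size $\gtrsim\sigma\cdot(\text{norm})$ whenever the norm is near $C'$; a refinement of the estimates around \eqref{complement}–\eqref{complement2} shows that it in fact dominates the transport-free discrepancy $\int D^\alpha_x u^{t_0}_i(0,y)G\,dy-D^\alpha_x u^{t_0}_i(0,x)$ for small time.

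The crux is bounding the nonlinear increments uniformly. Since $\mu^{t,1},\mu^{t,2}$ depend only on the \emph{local} variable $t-t_0\in[0,1)$ they are bounded by a universal constant, and the nonlinear terms additionally carry $\lambda$. Writing $G_{,j}=\frac{(x_j-y_j)}{\mu^{t,1}\nu(\sigma-s)}G+(\text{lower order in }s)$ and splitting the $y$-integral into a small ball $B_r(x)$ and its complement, I would use the antisymmetry identity \eqref{ulmrep*333} and the continuity on $B_r(x)$ of the quadratic ``data'' $f_i$ (which on $[0,s^\ast]$ have a fixed modulus bound depending on $C'$, hence on $C,\lambda,\nu,n$) to make the inner-ball part small, and the exponential decay of $G$ or the locally integrable bound \eqref{gcommaia} to handle the outer part; the Leray kernel is treated by the splitting $K_{n,i}=1_{B_1}K_{n,i}+1_{\mathbb R^n\setminus B_1}K_{n,i}$ with $1_{B_1}K_{n,i}\in L^1$, $1_{\mathbb R^n\setminus B_1}K_{n,i}\in L^2$ and Young's inequality exactly as in the proof of Theorem \ref{loccontr}. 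This gives a bound of the form $\lambda\,C_1(C,\nu,n)\cdot(\text{something}\lesssim\sigma)$ for the total nonlinear contribution, so choosing $\lambda$ small (depending only on $\nu,n,C$) makes it strictly smaller than the damping gain; one then shrinks $\rho\le\tilde\rho$ so that the outer-ball terms and the higher order Levy corrections are negligible, in the spirit of \eqref{epsilonest} and \eqref{pointupper1*}. Summing over $0\le|\alpha|\le m$ and $1\le i\le n$ one gets $\max_i{\big|}u^{t_0}_i(s,\cdot){\big|}_{H^m\cap C^m}\le C'$ with a strict margin, contradicting the maximality of $s^\ast$ unless $s^\ast\ge\rho c^{\sigma}_e$, which proves the Lemma. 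The main obstacle I anticipate is precisely making the nonlinear estimate uniform in $t_0$ and independent of the time horizon while keeping $\lambda$ tied only to $C,\nu,n$; this is exactly where the antisymmetry of the first order derivative of the Gaussian and the locality of the coefficients $\mu^{t,k}$ are indispensable.
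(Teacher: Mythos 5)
Your overall strategy (continuity/bootstrap on $[0,s^{\ast}]$, Duhamel representation against the fundamental solution, pitting the potential damping term against the nonlinear increments, and exploiting the antisymmetry of $G_{,j}$ on a small ball) is the same strategy the paper pursues for the closely related Lemma~\ref{lem3}; the paper never writes a separate proof for Lemma~\ref{lem2*} but derives it as an ``analogous'' statement. However, there is a concrete error in the step where you assert uniformity of $\lambda,\rho$.

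You write that ``$\mu^{t,1},\mu^{t,2}$ depend only on the local variable $t-t_0\in[0,1)$\,'' and are therefore bounded by a universal constant. That is \emph{false} for the transformation~\eqref{complambda} that Lemma~\ref{lem2*} is actually built on. There the scaling prefactor is the \emph{global} $\lambda(1+t)$, and the coefficients defined in~\eqref{mu} and~\eqref{mut2} are
\[
\mu=\frac{\sqrt{1-(t-t_0)^2}^3}{\tilde\rho(1+t)},\qquad
\mu^{t,2}=(1+t)\,\sqrt{1-(t-t_0)^2}^3 .
\]
Both carry the \emph{global} factor $(1+t)$, not $(1+(t-t_0))$: as $t_0\uparrow\infty$ (and $t\ge t_0$) the damping coefficient $\mu$ decays like $(1+t_0)^{-1}$ while the nonlinear coefficient $\mu^{t,2}$ grows like $(1+t_0)$, so the ratio of the nonlinear increment to the damping gain scales like $\lambda(1+t_0)^2$. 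Shrinking $\rho$ does not help, since both the damping term and the nonlinear terms are integrated over the same short time interval, and the antisymmetry/continuity gain in~\eqref{epsilonest} only supplies extra powers of $\rho$, not of $(1+t_0)^{-1}$. Consequently your argument, as written, forces $\lambda\lesssim (1+t_0)^{-2}$ and cannot deliver a $\lambda$ ``depending only on $\nu,n,C$.'' This matches what happens in the paper's own proof of Lemma~\ref{lem3}, where the explicit constants $c_{\mu,t_0}=\tfrac{3\sqrt3}{12+8\tau_0}$ and $c^{\mu}_{t_0}=\tfrac32+t_0$ appear and the admissible step size $\rho$ comes out proportional to $c_{\mu,t_0}/c^{\mu}_{t_0}\sim(1+t_0)^{-2}$.

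You appear to have conflated the global transformation~\eqref{complambda} used in Lemma~\ref{lem2*} with the \emph{localized} transformation $\lambda(1+(t-t_0))u^{t_0}_i(s,\cdot)=v^{t_0}_i(t,\cdot)$ of Remark~\ref{2833} (and of~\eqref{nonlinearmu} in the appendix), for which the coefficients $\mu_{loc},\mu^{t,2}_{loc}$ genuinely depend only on $t-t_0$ and are bounded uniformly in $t_0$; the uniformity you want is available only in that localized setting. To close the gap you should either (a) accept $t_0$-dependence in $\lambda$ and $\rho$ (which is all that the contradiction argument with $t_0=t_{\max}$ actually requires, cf.\ Remark~\ref{thmrem}), or (b) pass to the localized transformation and the representation~\eqref{Navlerayusubschemepreapp}/\eqref{Navlerayusubschemepre2app} consistently, in which case the boundedness of the coefficients and the ensuing choice of $\lambda,\rho$ depending only on $\nu,n,C$ are justified exactly as you describe.

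Two minor points. First, you cite \eqref{Navlerayusubschemepreapp} (which involves the localized $\mu^{\ast,t_0},\mu^{\ast,t_0,\tau,2}$) for the Duhamel representation of $u^{t_0}_i$, but Lemma~\ref{lem2*} should use the representation built from the global $\mu,\mu^{t,2}$ of~\eqref{Navlerayu22*}; the formulas look alike but carry different $t_0$-dependence, and this is precisely where the error enters. Second, when you describe the damping term as ``strictly negative of size $\gtrsim\sigma\cdot(\text{norm})$,'' this is only correct after a cancellation argument making precise that the convolution $\int\mu\,u^{t_0}_{i,\alpha}\,G$ tracks the sign of $D^{\alpha}_x u^{t_0}_i(0,x)$ pointwise for small $\sigma$; this can be salvaged as in~\eqref{diff}, but it should be stated rather than implied.
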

\end{rem}

\begin{rem}\label{2833}
A localized version of the local comparison function leads to a stronger version of the the main statement of theorem \ref{mthm}, where the quantifier for $C>0$ does not depend on the time horizon $T>0$. Furthermore, this version allows for a constructive interpretation. 
Define $u^{t_0}_i,~1\leq i\leq n$ for a given constant $\lambda >0$ via
\begin{equation}\label{complambda}
\lambda(1+(t-t_0))u^{t_0}_i(s,.)=v^{t_0}_i(t,.)~\mbox{with}~s=\frac{t-t_0}{\sqrt{1-(t-t_0)^2}}
\end{equation}
Then we have
\begin{equation}
\frac{\partial v^{t_0}}{\partial t}=\lambda u^{t_0}_i(s,.)+\lambda (1+(t-t_0)) u^{t_0}_{i,s}\frac{1}{\sqrt{1-(t-t_0)^2}^3},
\end{equation}
and the function $u^{t_0}_i,~1\leq i\leq n$ satisfies the equation
\begin{equation}\label{Navlerayu22*}
\left\lbrace \begin{array}{ll}
\frac{\partial u^{t_{0}}_i}{\partial s}-\mu^{t,1}_{loc}\nu\sum_{j=1}^n \frac{\partial^2 u^{t_{0}}_i}{\partial x_j^2} 
+\lambda\mu^{t,2}_{loc}\sum_{j=1}^n u^{t_{0}}_j\frac{\partial u^{\lambda,t_{0}}_i}{\partial x_j}+\mu_{loc} u^{t_{0}}_i=\\
\\ \lambda\mu^{t,2}_{loc}\sum_{j,r=1}^n\int_{{\mathbb R}^n}\left( \frac{\partial}{\partial x_i}K_n(x-y)\right) \sum_{j,r=1}^n\left( \frac{\partial u^{t_0}_r}{\partial x_j}\frac{\partial u^{t_0}_j}{\partial x_r}\right) (s,y)dy,\\
\\
\mathbf{u}^{
t_0}(0,.)=\frac{1}{\lambda(1+t_{0})}\mathbf{v}^{t_{0}}(t_{0},.),
\end{array}\right.
\end{equation}
where
\begin{equation}\label{mu}
\mu_{loc}=\frac{\sqrt{1-(t-t_0)^2}^3}{1+(t-t_0)},~
\mu^{t,1}_{loc} =\sqrt{1-(t-t_{0})^2}
\end{equation}
and
\begin{equation}\label{mut2}
\mu^{t,2}_{loc} =\left(1+(t-t_0)\right)  \sqrt{1-(t-t_{0})^2}^3.
\end{equation}
\end{rem}

\begin{rem}\label{rhocontr}
The contradiction argument can be applied to $v^{\rho}_i,~1\leq i\leq n$ directly as $v^{\rho}_i(\tau,.),.)=v_i(t,.)$ for all $1\leq i\leq n$ implies that the global regular result can be transferred directly from the function $v^{\rho}_i, 1\leq i\leq n$ to the function the function $v_i, 1\leq i\leq n$. For contradiction assume that there is a maximal $t_{\mbox{max}}$ such that a solution $v_i$ with $v^{\rho}_i\in C^1\left( (0,t_{max}]\times H^m\cap C^m\right) \cap C^0\left( [0,t_{max}]\times H^m\cap C^m\right)$ for $1\leq i\leq n$ with a linear upper bound as in (\ref{eq3}) exists, i.e., assume that $t_{max}>0$ is maximal such that
\begin{equation}\label{vboundorgm*}
{\big |}v^{\rho}_i
(\tau,.){\big |}_{H^m\cap C^m}\leq \left(1+\tau\right) C~\mbox{ for all }\tau\leq t_{max}.
\end{equation}
for some constant $C>0$ (which depends only on $\nu,n,h_i,~1\leq i\leq n$ and an arbitrary horizon $T>t_{max}$,
 such that for
(\ref{vboundorgm}) we have
\begin{equation}\label{vboundm}
{\big |}v^{\rho,t_{max}}_i
(t_{max},.){\big |}_{H^m\cap C^m}\leq \left(1+t_{max}\right) C
\end{equation}
for all $1\leq i\leq n$ for the same constant $C>0$. According to (\ref{uv0}) and (\ref{sigma}) with $t_0=t_{max}=\tau$ this implies that
\begin{equation}\label{uvtmax}
 {\big |}u^{\rho,t_{\max}}_i(0,.){\big |}_{H^m\cap C^m}
=\frac{{\big |}v^{\rho,t_{max}}_i(t_{max} ,.){\big |}_{H^m\cap C^m}}{1+t_{max}}\leq C.
 \end{equation}
Then according to Lemma \ref{lem1} there exists a $\rho>0$ and a constant $0<c_e< 1$ such that 
$v^{\rho,t_{max}}_i(\tau,.)\in H^m\cap C^m$ exists for $\tau\in [t_{max},t_{max}+c_e]$ for some constant $c_e>0$ and such that
\begin{equation}\label{uboundm}
{\big |}u^{\rho,t_{max}}_i(\sigma,.){\big |}_{H^m\cap C^m}\leq  C
\end{equation}
for all $\sigma\in \left[0,c_{\sigma_e} \right] $, and where $c_{\sigma_e}$ is the constant in $\sigma$-coordinates corresponding to the constant $c_e$ in $\tau$-coordinates. 

Hence for $s\in [0,\rho c^{\sigma}_e]$ and corresponding $\tau\in \left[ t_{max},t_{max}+\rho c_e\right] $ we have
\begin{equation}
C(1+\tau)\geq (1+\tau)C\geq {\big |}u^{t_{max}}_i(s,.){\big |}_{H^m\cap C^m}={\big |}v^{\rho,t_{max}}_i(\tau,.){\big |}_{H^m\cap C^m},
\end{equation}
in contradiction to the maximality of $t_{max}$ and $v^{\rho}_i(\tau,.)=v^{\rho,t_{max}}_i$ for $\tau\in \left[t_{max},t_{max}+c_e\right]$
\end{rem}

\begin{rem}
The relation (\ref{vboundm}) implies that from (\ref{uv0}) and (\ref{sigma}) we have for $t_0=t_{max}=\tau$
\begin{equation}\label{uv0*}
{\big |}u^{\rho,t_{max}}_i(0,.){\big |}_{H^q\cap C^q}={\Bigg |}\frac{v^{\rho,t_{max}}_i(t_{max} ,.)}{1+t_{max}}{\Bigg |}_{H^q\cap C^q}\leq C,
\end{equation}
Then from Lemma \ref{lem2} we have that
\begin{equation}\label{claimlem2}
{\big |}u^{\rho,t_{max}}_i(\sigma,.){\big |}_{H^q\cap C^q}\leq C,
\end{equation}
where the Lemma \ref{lem2} claims strongly that this holds for all $\sigma\geq 0$. For the (abstract) construction of a global solution it is sufficient to have (\ref{claimlem2}) for some appropriate discrete sets of $\sigma$s. Indeed some properties of the operator are needed to get the stronger claim.
\end{rem}

\begin{rem}\label{abstr}
In the preceding claim the term 'abstract construction' means that the argument presented here provides no numerical scheme. We interpret the existence quantifier in an abstract sense - similarly as the use of the existence quantifier in a Weierstrass $\epsilon$-$\delta$ definition of continuity. From the point of view of classical mathematics this perfectly legitimate and even popular as such a form of a use of an existence quantifier is to be found usually in 'proofs by contradiction'. Furthermore, similarly as for uniformly continuous functions in the Weierstrass definition for all $\epsilon >0$ there exists a $\delta$ depending on $\epsilon$ but not on the argument of the continuous function, we may use the existence quantor of $\rho>0$ for certain variations of auto-controlled schemes.
\end{rem}

\begin{rem}\label{vvrho}
In the sentence before (\ref{vboundm}) we assume a maximal $t_{\mbox{max}}$ for the original solution $v_i$ with $v_i\in C^1\left( [0,t_{max}]\times H^m\cap C^m\right) $. Note that $v^{\rho,t_{max}}_i(t_{max},.)=v_i(t_{max},.)$ by definition. 
\end{rem}

\subsection{Proof Lemma \ref{lem2} (weaker forms)}

As we mentioned we do not need  lemma \ref{lem2} in its full force in order to prove \ref{eq3}. First it suffices to prove it on a finite time interval for $\sigma\geq 0$ for example we may have $\tau-t_0\leq \frac{1}{2}$ and, correspondingly, $\sigma\in \left[0,\frac{1}{\sqrt{3}} \right]$.

An essential rephrasing of lemma \ref{lem2} is then as follows: given $t_0\geq 0$ and for $\rho>0$ small enough we have
\begin{equation}\label{cl1}
{\big |}u^{\rho,t_0}_i(0,.){\big |}_{H^m\cap C^m}\leq  C\Rightarrow {\big |}u^{\rho,t_0}_i(\sigma,.){\big |}_{H^m\cap C^m}\leq  C \mbox{ for all $\sigma \in \left[ 0,\frac{1}{\sqrt{3}}\right] $.}
\end{equation}

In order to prove (\ref{eq3}) we can even consider a weaker formulation than (\ref{cl1}).
For this purpose, we consider a time discretization $\sigma_p=p\Delta,~1\leq p\leq N$  of the interval $\left[0,\frac{1}{\sqrt{3}} \right]$ with a constant $\Delta >0$ and such that $N\Delta =\frac{1}{\sqrt{3}}$.

We mention that for our purpose it is sufficient (and essential) to prove  
\begin{equation}\label{discrete}
{\big |}u^{\rho,t_0}_i(0,.){\big |}_{H^m\cap C^m}\leq  C\Rightarrow {\big |}u^{\rho,t_0}_i(\sigma_p,.){\big |}_{H^m\cap C^m}\leq  C \mbox{ for all $p \leq N $},
\end{equation}
but the slightly stronger claim (\ref{cl1}) and the stronger statement of lemma \ref{lem2}  can also be proved. We shall prove the claim (\ref{discrete}) first and then show that he claim (\ref{eq3}) follows even from this weak form of Lemma \ref{lem2}. Then we show that the stronger claims (\ref{cl1}) and even th stronger statement in lemma \ref{lem2} can be proved. 

For $1\leq p\leq N-1$ we consider a subscheme of functions $u^{\rho,t_0,p}_i:\left[\sigma_{p-1},\sigma_{p-1}+\Delta \right]\rightarrow {\mathbb R},~1\leq i\leq n$ which solve the Cauchy problems
\begin{equation}\label{Navlerayusubscheme}
\left\lbrace \begin{array}{ll}
\frac{\partial u^{\rho,t_0,p}_i}{\partial \sigma}-\rho\mu^{\tau,1}\nu\sum_{j=1}^n \frac{\partial^2 u^{\rho,t_0,p}_i}{\partial x_j^2} 
+\rho\mu^{\tau,2}\sum_{j=1}^n u^{\rho,t_0,p}_j\frac{\partial u^{\rho,t_0,p}_i}{\partial x_j}+\mu u^{\rho,t_0,p}_i=\\
\\ \rho \mu^{\tau,2}\sum_{j,r=1}^n\int_{{\mathbb R}^n}\left( \frac{\partial}{\partial x_i}K_n(x-y)\right) \sum_{j,r=1}^n\left( \frac{\partial u^{\rho,t_0,p}_r}{\partial x_j}\frac{\partial u^{\rho,t_0,p}_j}{\partial x_r}\right) (\sigma,y)dy,\\
\\
\mathbf{u}^{\rho,
t_0,p}((p-1)\Delta,.)=\mathbf{u}^{\rho,
t_0,p-1}((p-1)\Delta,.),
\end{array}\right.
\end{equation}
where the initial data at each time step $m$ are given recursively from the previous time step and where for $p=1$ we have  $\mathbf{u}^{\rho,t_0,1}(0,.)=\frac{1}{1+t_{0}}\mathbf{v}^{\rho,t_0}(t_0,.)$.
Assuming inductively that for all $1\leq i\leq n$ and some $m\geq 2$ we have data
\begin{equation}
{\big |}u^{\rho,t_0,p-1}_i((p-1)\Delta,.){\big |}_{H^m\cap C^m}\leq C
\end{equation}
by a local contraction argument it follows that for some $\Delta>0$ we have
\begin{itemize}
 \item[i)] the local solution $\mathbf{u}^{\rho,
t_0,p}=\left(u^{\rho,t_0,p}_1,\cdots,u^{\rho,t_0,p}_n\right)$ of (\ref{Navlerayusubscheme}) exists, where $$u^{\rho,t_0,p}_i\in C^0\left([\sigma_{p-1},\sigma_{p-1}+\Delta],H^m\cap C^m\right)\cap C^1\left((\sigma_{p-1},\sigma_{p-1}+\Delta],H^m\cap C^m\right)$$
\item[ii)] for $\Delta >0$ small the increment of the solution $\delta u^{\rho,t_0,p}_i(\sigma,.):=u^{\rho,t_0,p}_i(\sigma,.)-u^{\rho,t_0,p}_i((p-1)\Delta,.)$ becomes small, i.e., for all $1\leq i\leq n$
\begin{equation}\label{increment}
\sup_{\sigma\in [(p-1)\Delta ,p\Delta]}{\big |}\delta u^{\rho,t_0,p}_i(\sigma,.){\big |}_{H^m\cap C^m}\leq c
\end{equation}
where such time step size $\Delta$ and increment upper bound $c>0$ depends only on $n,\rho,C,\nu,K_{n},\mu,\mu^{\tau,k}$ for $0\leq k\leq 2$. We remark that the constant $c$ goes to zero as $\Delta$ goes to zero where this relation is not linear but H\"{o}lder. Moreover, if there is such a pair $\Delta$ and $c$ with (\ref{increment}) for a given $\tilde{\rho}$ the same relation (\ref{increment}) holds for the same pair $\Delta$ and $c$ for any smaller $\rho\leq \tilde{\rho}$. 
\end{itemize}

We note that i) follows from the local existence lemma \ref{lem1} via the relation (\ref{uv0}). We note that that the local contraction result could be proved for $u^{\rho,t,p}_i,~1\leq i\leq n$ satisfying (\ref{Navlerayusubscheme}) for some $\rho>0$ itself, where the existence of the damping term facilitates the proof. However, as it is sufficient to have lemma \ref{lem1} we do not need to prove item i) here. Note that in our specific situation lemma \ref{lem1} implies the existence of $u^{\rho,t,p}_i,~1\leq i\leq n$ for all time steps $p\geq 1$ at once.

Hence we may concentrate on item ii).
As the functions  $$u^{\rho,t_0,p}_i\in C^0\left(\left[(p-1)\Delta,p\Delta \right],H^m\cap C^m  \right) ~1\leq i\leq n$$ exist we may consider classical representations of these functions in terms of the fundamental solution $G_p$ of the equation
\begin{equation}
\frac{\partial G_p}{\partial \sigma}-\rho\mu^{\tau,1}\nu\sum_{j=1}^n \frac{\partial^2 G_p}{\partial x_j^2}=0, 
\end{equation}
where the subscript $p$ indicates that this is with respect to the domain $$\left[ \left(p-1\right)\Delta,p\Delta\right]\times {\mathbb R}^n.$$ In the following we denote by
$p_{m,j}$ the first order spatial derivative of the fundamental solution with respect to the variable $x_j$. Note that a regular  fundamental solution $G_p$ exists by classical methods as we have bounded regular coefficients. For convenience we use the notation $D^{\alpha}_xf$ of $f_{,\alpha}$ and $(fg)_{,\alpha}$ for multivariate spatial derivatives with multiindex $\alpha$ for single functions and some products of functions respectively.
Using convolution rules with respect to the spatial variables for multivariate derivatives $D^{\alpha}_x$ with multiindices $\alpha=(\alpha_1,\cdots,\alpha_n)=\beta+1_j:=(\beta_1,\cdots,\beta_j+1,\cdots,\beta_n)$ for $0\leq |\alpha|\leq m$ we have the functional increment representation
\begin{equation}\label{Navlerayusubschemerep}
\begin{array}{ll}
D^{\alpha}_xu^{\rho,t_0,p}_i(\sigma,x)-\int_{{\mathbb R}^n}D^{\alpha}_xu^{\rho,t_0,p-1}_i((p-1)\Delta,y)G_{p}(\sigma,x;(p-1)\Delta,y)dy\\
\\
=-\int_{(p-1)\Delta}^{\sigma}\int_{{\mathbb R}^n}\mu(s)  u^{\rho,t_0,p}_{i,\alpha}(s,y)G_{p}(\sigma,x;s,y)dyds\\
\\
-\rho\int_{(p-1)\Delta}^{\sigma}\int_{{\mathbb R}^n}\mu^{\tau,2}(s)\sum_{j=1}^n \left( u^{\rho,t_0,p}_j\frac{\partial u^{\rho,t_0,p}_i}{\partial x_j}\right)_{,\beta} (s,y)G_{p,j}(\sigma,x;s,y)dyds\\
\\
+ \rho\int_{(p-1)\Delta}^{\sigma}\int_{{\mathbb R}^n} \mu^{\tau,2}(s)\sum_{j,r=1}^n\int_{{\mathbb R}^n}\left( \frac{\partial}{\partial x_i}K_n(z-y)\right)\times\\
\\
\times \sum_{j,r=1}^n\left( \frac{\partial u^{\rho,t_0,p}_r}{\partial x_j}\frac{\partial u^{\rho,t_0,p}_j}{\partial x_r}\right)_{,\beta} (s,y)G_{p,j}(\sigma,x;s,z)dydzds.
\end{array}
\end{equation}
Note that $G_{p,j}$ denotes the spatial derivative of first order of the fundamental solution $G_p$ with respect to the spatial variable $x_j$.

Next we have
\begin{lem}\label{lem3}
Assume that some $t_0\geq 0$ is given. Assume furthermore that $v^{t_0}_i(t_0,.)=v^{\rho,t_0}_i(t_0,.)\in H^m\cap C^m$ such that Lemma \ref{lem1} holds for $v^{t_0}_i(t_0,.),~1\leq i\leq n$ and a related statement holds for $v^{\rho,t_0}_i(\tau,.),~1\leq i\leq n$ hold for some $\tilde{\rho}>0$. Especially assume that 
\begin{equation}
{\big |}v^{t_0}_i(t_0,.){\big |}_{H^m\cap C^m}={\big |}v^{\tilde{\rho},t_0}_i(t_0,.){\big |}_{H^m\cap C^m}\leq C+Ct_0
\end{equation}
for some $m\geq 2$ for some $C>0$ such that
\begin{equation}
{\big |}u^{\tilde{\rho},t_0}_i(0,.){\big |}_{H^m\cap C^m}\leq C
\end{equation}
for the same $m\geq 2$. Then there is a triple of small positive real numbers $\Delta,c,\rho$ with $0<\rho\leq \tilde{\rho}$ such with respect to the discretization considered above we have that for all $1\leq p\leq N$
\begin{equation}\label{preserv}
{\big |}u^{\rho,t_0}_i(\sigma_{p-1},.){\big |}_{H^m\cap C^m}\leq  C\Rightarrow {\big |}u^{\rho,t_0}_i(\sigma_p,.){\big |}_{H^m\cap C^m}\leq  C,
\end{equation}
holds.
\end{lem}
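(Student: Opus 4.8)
The plan is to derive the one-step estimate (\ref{preserv}) from the increment representation (\ref{Navlerayusubschemerep}) for $D^{\alpha}_xu^{\rho,t_0,p}_i$ with $0\leq|\alpha|\leq m$, and then to iterate it over $p=1,\dots,N$. Existence and regularity of the subscheme solutions $u^{\rho,t_0,p}_i$, together with the smallness (\ref{increment}) of the increments $\delta u^{\rho,t_0,p}_i$, are supplied by Lemma \ref{lem1} through the relation (\ref{uv0}) (the damped equation inherits the local contraction a fortiori), so only the quantitative bound has to be produced; and since the initial data $u^{\rho,t_0}_i(0,.)$ do not depend on $\rho$, the hypothesis $|u^{\rho,t_0}_i(0,.)|_{H^m\cap C^m}\leq C$ holds for every $\rho\leq\tilde\rho$. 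On the right-hand side of (\ref{Navlerayusubschemerep}) I would distinguish the \emph{diffusion term} $\int_{{\mathbb R}^n}D^{\alpha}_xu^{\rho,t_0,p-1}_i((p-1)\Delta,y)G_p(\sigma,x;(p-1)\Delta,y)\,dy$, the \emph{potential damping term} carrying $\mu(s)$ but no factor $\rho$, and the two \emph{nonlinear terms} (Burgers and Leray) carrying the factor $\rho\mu^{\tau,2}(s)$ and convoluted against the first order derivatives $G_{p,j}$ of the Gaussian; for $|\alpha|=0$ one uses the analogous representation with $G_{p,i}$ obtained by integrating the Burgers term by parts and splitting the Laplacian kernel as in Lemma \ref{lem1}.

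For the diffusion term, since $G_p(\sigma,\cdot;s,\cdot)$ is, as a function of $x$, a nonnegative kernel of total mass $1$, convolution with it is a contraction in every $|\cdot|_{H^m}$ and $|\cdot|_{C^m}$ norm, so this term is already $\leq C$ under the inductive hypothesis; passing moreover to Fourier variables in $x$ the convolution becomes multiplication by $\exp(-\rho\nu\mu'|\xi|^2(\sigma-(p-1)\Delta))$, so by Plancherel and the viscosity-damping lemma above (with cut-off $a\sim\Delta^s$, $s\in(1/3,1/2)$) its $L^2$-norm is decreased by an amount that offsets growth of order $\rho\Delta$. One also has to check, exactly as in item i), that the ``diffusion growth'' difference of (\ref{leftgrowth}) does not overwhelm the damping for small time: splitting the convolution over $B^n_\epsilon(x)$ and its complement with $\epsilon=(\sigma-(p-1)\Delta)^{(1-\delta)/(2\delta)}$, $\delta\in(0,1/2)$, makes both pieces $o(\sigma-(p-1)\Delta)$. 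For the potential damping term, concentration of $G_p$ as $\rho\downarrow0$ together with the increment smallness (\ref{increment}) gives that $-\int_{(p-1)\Delta}^{p\Delta}\!\int_{{\mathbb R}^n}\mu(s)u^{\rho,t_0,p}_{i,\alpha}(s,y)G_p(p\Delta,x;s,y)\,dy\,ds$ equals $-\bar\mu\,D^{\alpha}_xu^{\rho,t_0,p}_i((p-1)\Delta,x)\,\Delta$ up to an error small with $\rho$, where $\bar\mu$ is a value of $\mu$, which is strictly positive on $[0,1/\sqrt3]$ and bounded below uniformly in $t_0$; the decisive feature is that this term has the sign opposite to the value and no small factor.

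For the nonlinear terms I would write $G_{p,j}(\sigma,x;s,y)=\frac{(x_j-y_j)}{\sigma-s}G_p^*(\sigma,x;s,y)$, split the $y$-integral into $B_{\rho^r}(x)$ and its complement for $r\in(1/3,1/2)$, use on the ball the spatial antisymmetry of $G_{p,j}$ as in (\ref{ulmrep*333int}) (legitimate because $G_p$ solves a heat equation with purely time-dependent coefficients) together with uniform continuity in $y$ of the quadratic ``data'' functions, and on the complement the exponential factor $\exp(-c/(\rho^{1-2r}(\sigma-s)))$. As in (\ref{epsilonest}) and (\ref{pointupper1*}) with $k$ chosen large, this bounds the nonlinear increments by $\rho^{(1-2r)(2k+1)}(1+t_0)C^2\Delta$ up to a modulus-of-continuity factor tending to $0$, the factor $1+t_0$ coming from $\mu^{\tau,2}$ in the global time factor of (\ref{uv0}). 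Assembling: if $\sup_x|D^{\alpha}_xu^{\rho,t_0,p}_i((p-1)\Delta,x)|\leq C$, then at a point $x$ where the value lies within $c$ of $\pm C$ the potential damping term has magnitude $\gtrsim\bar\mu(C-c)\Delta$ with the opposing sign, which for $\rho$ small enough dominates the diffusion contribution (bounded by $C$) plus the small nonlinear contribution, so $|D^{\alpha}_xu^{\rho,t_0,p}_i(p\Delta,x)|\leq C$; where the value lies below $C-c$ the increment bound (\ref{increment}) already gives $\leq C$. Running the same argument for the $L^2$-norm with viscosity damping in place of pointwise dominance yields the $H^m$ part, and summing over $0\leq|\alpha|\leq m$ and inducting on $p=1,\dots,N$ with a fixed triple $(\Delta,c,\rho)$, $\rho$ chosen last, gives (\ref{preserv}).

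The main obstacle is the tension, for large $t_0$, between the nonlinear coefficient $\mu^{\tau,2}\sim1+t_0$ (produced by the global time factor in (\ref{uv0})) and the damping coefficient $\mu\sim1/(1+t_0)$: a naive estimate of the nonlinear increment by $\rho(1+t_0)C^2\Delta$ would force $\rho\lesssim1/T^2$, which is too strong. This is precisely why one must exploit (\ref{pointupper1*}) with $k$ arbitrarily large, pushing the power of $\rho$ in the nonlinear bound as high as needed, so that the admissible step size degrades only weakly, $\rho\gtrsim1/T^s$ with $s\in(r,1)$ and $s$ as small as one likes; keeping the remaining constants $c$ and $\Delta$ genuinely independent of $t_0$ is what later allows the per-step preservation to iterate into the global, time-linear bound on $v_i$. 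An alternative avoiding this issue is to run the same scheme with the localized transformation in place of (\ref{uv0}), for which $\mu^{\tau,2}$ stays bounded and $\rho$ is independent of $T$.
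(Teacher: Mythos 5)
Your proposal is correct in outline but follows a sharper route than the paper's own proof of Lemma~\ref{lem3}, which is cruder and correspondingly weaker in its conclusions. The paper introduces an inner iteration $u^{\rho,t_0,p,q}_i$ in $q$, writes the analogue of (\ref{Navlerayusubschemerep}) for each $q$, and estimates the nonlinear terms by the locally integrable bound ${\big |}G_{p,j}(\sigma_p,x;s,z){\big |}\leq c\,{|\sigma_p-s|}^{-\delta}{|x-z|}^{-(n+1-2\delta)}$ together with standard Sobolev product estimates, getting an upper bound of order $\rho\, c^{\mu,t_0}\,C_G\,\Delta^{1-\delta}\,C_m(1+nC_K)C^2$; it then reduces WLOG to the case ${\big |}u^{\rho,t_0,p,q-1}_i(\sigma_{p-1},.){\big |}_{H^m\cap C^m}\geq C/2$ and reads off the admissible $\rho$ from a single norm-level inequality, see (\ref{ac}). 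Because $c^{\mu}_{t_0}\sim 1+t_0$ sits in the denominator and $c_{\mu,t_0}\sim 1/(1+t_0)$ in the numerator of that $\rho$, this crude argument concedes $\rho\sim 1/(1+t_0)^2$, i.e.\ a time-step shrinking like $1/T^2$; that is enough for Lemma~\ref{lem3} as literally stated and for the weak interpretation (\ref{eq3rem1}) of Theorem~\ref{mthm}, but not for the stronger ``$\rho\gtrsim 1/T^s$, $s<1$'' claim of Lemma~\ref{lem2}. You instead import the two finer mechanisms the paper develops in item~(i) of the main existence proof --- the Plancherel/viscosity-damping estimate for the diffusion term, and the antisymmetry decomposition $G_{p,j}=\frac{(x_j-y_j)}{\sigma-s}G^{*}_p$ over a ball $B_{\rho^r}(x)$ as in (\ref{ulmrep*333}), (\ref{epsilonest}), (\ref{pointupper1*}) --- which push the power of $\rho$ in the nonlinear bound as high as one likes, and you organize the conclusion pointwise near extremal values rather than via the paper's norm-level WLOG reduction. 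Your version therefore buys the stronger $t_0$-uniformity at the cost of more machinery; the paper's buys a shorter, self-contained argument at the cost of a $T$-dependent step size, and defers the sharper claim to the localized transformation or to item~(i). Your identification of the tension between $\mu^{\tau,2}\sim 1+t_0$ and $\mu\sim 1/(1+t_0)$, and of the two ways around it, is exactly the point the paper itself acknowledges when it flags the quantifier for $\rho$ in Lemma~\ref{lem2} and again in Remark~\ref{2833}.
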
 

Some remarks are in order

\begin{rem}
Note that a) the triple $\Delta,c,\rho$ of small positive numbers can be chosen independently of the time step number $p\leq N$ as $C>0$ is preserved and that b) if  $\Delta,c,\rho$ is such a triple with the property (\ref{preserv}), then any triple $\Delta,c,\rho'$ with $0<\rho'\leq \rho$ is also a triple with the property (\ref{preserv}).
\end{rem}

\begin{rem}
The analysis below shows that the slightly stronger claim (\ref{cl1}) can also be proved by proving it for each substep with a triple $(\Delta,c,\rho)$. Note that this is a claim for finite time $0\leq \sigma\leq \frac{1}{\sqrt{3}}$. As we remarked above, the claim (\ref{cl1}) or the weaker claim (\ref{discrete}) are both indeed sufficient for a proof of existence of a global regular solution for the Navier Stokes equation and Lemma \ref{lem2} could be reformulated using one of them. However, we shall show that even the statement (\ref{claimlemma2}) in lemma \ref{lem2} holds. In the next section we add a remark which is useful in order observe that the stronger claims (\ref{cl1}) and (\ref{claimlemma2}) hold.  The formulation in lemma \ref{lem2} is still a bit stronger, and it is easier to prove it for the claim using the localized transformation in mentioned in remark \ref{loc}. If we use the global factor $(1+\tau)$ we have linear growth of $\mu^{\tau,2}$. This can be offset by using a variable time step size $\rho_p\sim \frac{1}{p}$ for the subscheme $u^{\rho,l,p}_i,~1\leq i\leq n$. However, we do not even need this: as the time $\tau\in [t_0,t_0+\rho)$ is local, the coefficients
$\mu^{\tau,2}$ (and a fortiori the coefficients $\mu^{\tau,k},~k=0,1$) are bounded for all $\sigma \geq 0$. 
 \end{rem}

\begin{proof}
(lemma \ref{lem3}). We assume that $t_0\geq 0$ and $\tilde{\rho}$ are given as in Lemma \ref{lem3} such that
\begin{equation}\label{preserv2}
{\big |}u^{\tilde{\rho},t_0}_i(\sigma_{p-1},.){\big |}_{H^m\cap C^m}\leq  C
\end{equation}
for some $C>0$ and some $p\geq 1$ and all $1\leq i\leq n$. We want to show this preservation of the upper bound $C$ directly proving local properties of the functions $u^{\rho,t_0}_i$. It sufficed to prove
\begin{equation}\label{preserv3}
{\big |}u^{\tilde{\rho},t_0}_i(\sigma_{p},.){\big |}_{H^m\cap C^m}\leq  C,
\end{equation}
i.e., to prove the claim of Lemma \ref{lem3} for one time step, as the claim (\ref{preserv}) follows then immediately by induction. For $0<\rho\leq \tilde{\rho}$ we know that the function $u^{\rho,t_0,p}_i,~1\leq i\leq n$ satisfies (\ref{Navlerayusubscheme}) and may be solved by a local scheme $u^{\rho,t_0,p,q}_i,~1\leq i\leq n,~p\geq 1$ with
\begin{equation}
u^{\rho,t_0,p}_i=\lim_{q\uparrow \infty}u^{\rho,t_0,p,q}_i
\end{equation}
for all $1\leq i\leq n$,
where for $q=0$ we define $u^{\rho,t_0,p,0}_i(t_0,.):=u^{\rho,t_0,p-1}_i(t_0,.)$ for $1\leq i\leq n$ and for $q\geq 1$ the function $u^{\rho,t_0,p,q}_i,~1\leq i\leq n$ is determined as the solution of the Cauchy problem
\begin{equation}\label{Navlerayusubschemesub}
\left\lbrace \begin{array}{ll}
\frac{\partial u^{\rho,t_0,p,q}_i}{\partial \sigma}-\rho\mu^{\tau,1}\nu\sum_{j=1}^n \frac{\partial^2 u^{\rho,t_0,p,q}_i}{\partial x_j^2} 
+\rho\mu^{\tau,2}\sum_{j=1}^n u^{\rho,t_0,p,q-1}_j\frac{\partial u^{\rho,t_0,p,q-1}_i}{\partial x_j}\\
\\
-\mu u^{\rho,t_0,p,q-1}_i+ \rho \mu^{\tau,2}\sum_{j,r=1}^n\int_{{\mathbb R}^n}\left( \frac{\partial}{\partial x_i}K_n(x-y)\right)\times\\
\\
\times \sum_{j,r=1}^n\left( \frac{\partial u^{\rho,t_0,p,q-1}_r}{\partial x_j}\frac{\partial u^{\rho,t_0,p,q-1}_j}{\partial x_r}\right) (\sigma,y)dy,\\
\\
\mathbf{u}^{\rho,
t_0,p,q}((p-1)\Delta,.)=\mathbf{u}^{\rho,
t_0,p-1}((p-1)\Delta,.).
\end{array}\right.
\end{equation}
We now assume inductively that we have the upper bound
\begin{equation}
\max_{1\leq i\leq n}\sup_{s\in [\sigma_{p-1},\sigma_p]}{\Big |}u^{\rho,t_0,p,q-1}_i(s,.){\Big |}_{H^m\cap C^m}\leq C,
\end{equation}
which holds for the time indpendent function $u^{\rho,t_0,p,0}_i,~1\leq i\leq n$ by definition.
Similarly as in (\ref{Navlerayusubschemerep}) we get for each member of $u^{\rho,t_0,p,q}_i,~1\leq i\leq n,~p\geq 1$ for $0\leq |\alpha|\leq m$ and the relation of $\beta+1_j=\alpha$ as considered before (\ref{Navlerayusubschemerep}) above the representation
\begin{equation}\label{Navlerayusubschemesubrep}
\begin{array}{ll}
D^{\alpha}_xu^{\rho,t_0,p,q}_i(\sigma,x)-\int_{{\mathbb R}^n}D^{\alpha}_xu^{\rho,t_0,p-1}_i((p-1)\Delta,y)G_{p}(\sigma,x;(p-1)\Delta,y)dy\\
\\
=-\int_{(p-1)\Delta}^{\sigma}\int_{{\mathbb R}^n}\mu(s)  u^{\rho,t_0,p,q-1}_{i,\alpha}(s,y)G_{p}(\sigma,x;s,y)dyds\\
\\
-\rho\int_{(p-1)\Delta}^{\sigma}\int_{{\mathbb R}^n}\mu^{\tau,2}(s)\sum_{j=1}^n \left( u^{\rho,t_0,p,q-1}_j\frac{\partial u^{\rho,t_0,p,q-1}_i}{\partial x_j}\right)_{,\beta} (s,y)G_{p,j}(\sigma,x;s,y)dyds\\
\\
+ \rho\int_{(p-1)\Delta}^{\sigma}\int_{{\mathbb R}^n} \mu^{\tau,2}(s)\sum_{j,r=1}^n\int_{{\mathbb R}^n}\left( \frac{\partial}{\partial x_i}K_n(z-y)\right)\times\\
\\
\times \sum_{j,r=1}^n\left( \frac{\partial u^{\rho,t_0,p,q-1}_r}{\partial x_j}\frac{\partial u^{\rho,t_0,p,q-1}_j}{\partial x_r}\right)_{,\beta} (s,y)G_{p,j}(\sigma,x;s,z)dydzds.
\end{array}
\end{equation}
At this point we note that $t_0\leq t< t_0+\rho$. 
This corresponds to $t_0\leq \tau <t_0+1$. 
As we have a factor $\sqrt{1-(\tau-t_0)^2}^3$ 
in all coefficients $\mu,\mu^{\tau,2}$ we can extend the following argument to $\tau\in [t_0,t_0+1)$, but -as we have remarked before it is sufficient to consider the horizon $t\in [t_0,t_0+\rho c_e]$ corresponding to $\tau\in [t_0,t_0+c_e]$ for some $c_e\in (0,1)$. For notational convenience we set $c_e=\frac{1}{2}$. This corresponds to a horizon $\sigma \in \left[0,\frac{1}{\sqrt{3}}\right]$ with respect to $\sigma$.      Note that for all $p\geq 1$ we have $\sigma_p=p\Delta\in \left[0,\frac{1}{\sqrt{3}}\right]$). Going back to (\ref{mu}) above on the considered interval we have the lower bound
 \begin{equation}\label{mu*}
 \inf_{\sigma\in \left[0,\frac{1}{\sqrt{3}}\right] }{\big |}\mu(\sigma){\big |}=\inf_{\tau\in \left[t_0,t_0+\frac{1}{2} \right] }{\Bigg |}\frac{\sqrt{1-\tau^2_{t_0}(\sigma)}^3}{1+\tau(\sigma)}{\Bigg |}\geq \frac{3\sqrt{3}}{12+8\tau_0 }=c_{\mu,t_0}
\end{equation}
and the upper bound
\begin{equation}\label{mu**}
 \sup_{\sigma\in \left[0,\frac{1}{\sqrt{3}}\right] }\mu(\sigma)=\sup_{\tau\in[t_0,t_0+0.5]}\frac{\sqrt{1-\tau^2_{t_0}(\sigma)}^3}{1+\tau(\sigma)}(1+\tau(\sigma))^2\leq \left(\frac{3}{2}+t_0 \right)=: c^{\mu}_{t_0},
\end{equation}
where we add the subscript $t_0$ in order to indicate the (linear) dependence of this upper bound on $t_0\geq 0$. Note that this upper bound holds for given finite $t_0\geq 0$ for all $\sigma\in [0,\infty)$. However, at this moment we consider only $\sigma\in [0,\sigma_{c_e}]$ for $c_e=0.5$.  
Now, as $\mu(s)>c_{\mu,t_0}$ for $s\in [(p-1)\Delta,p\Delta)=[\sigma_{p-1},\sigma_p)$ there exists a $0<\rho'\leq \tilde{\rho}$ such that
\begin{equation}\label{potinf}
\begin{array}{ll}
-{\Big |}\int_{(p-1)\Delta}^{p\Delta }\int_{{\mathbb R}^n}\mu(s)  u^{\rho',t_0,p,q-1}_{i,\alpha}(s,y)G_{p}(p\Delta,.;s,y)dyds{\Big |}_{H^m\cap C^m}\\
\\
\leq -c_{\mu,t_0}\left(\sigma_p-\sigma_{p-1}\right)^{\alpha} {\big |}u^{\rho',t_0,p,q-1}_{i,\alpha}(\sigma_{p-1},.){\big |}_{H^m\cap C^m}.
\end{array}
\end{equation}
for $\alpha \in (0,1)$ (you may expect that the latter statement holds up to an $\epsilon>0$ but as $\mu(s)>c_{\mu,t_0}$ it holds indeed as stated).
Note that the relation in (\ref{potinf}) holds for $0<\rho\leq \rho'$ if it holds for $\rho'$ such that we may assume that it holds for generic $\rho>0$ ( consider $\min\left\lbrace \rho,\rho' \right\rbrace$ with the choices so far and call this $\rho>0$ again). Then we have (recall that $\Delta =(\sigma_p-\sigma_{p-1})$)
\begin{equation}\label{aa}
\begin{array}{ll}
{\big |}u^{\rho,t_0,p,q}_i(p\Delta,.){\big |}_{H^m\cap C^m}\\
\\
\leq {\big |}\int_{{\mathbb R}^n}u^{\rho,t_0,p-1}_i((p-1)\Delta,y)G_{p}(\sigma_p,.;(p-1)\Delta,y)dy{\big |}_{H^m\cap C^m}\\
\\
-c_{\mu,t_0} (\sigma_p-\sigma_{p-1})^{\alpha}{\big |} u^{\rho,t_0,p,q-1}_{i}(\sigma_{p-1},.){\big |}_{H^m\cap C^m}\\
\\
+\rho c^{\mu,t_0}{\Big |}\int_{(p-1)\Delta}^{p\Delta}\int_{{\mathbb R}^n}\sum_{j=1}^n \left( u^{\rho,t_0,p,q-1}_j\frac{\partial u^{\rho,t_0,p,q-1}_i}{\partial x_j}\right)(s,y)G_{p,j}(\sigma_p,.;s,y)dyds{\Big |}_{H^{m-1}\cap C^{m-1}}\\
\\
+ \rho c^{\mu,t_0}{\Big |}\int_{(p-1)\Delta}^{p\Delta}\int_{{\mathbb R}^n} \sum_{j,r=1}^n\int_{{\mathbb R}^n}\left( \frac{\partial}{\partial x_i}K_n(z-y)\right)\times\\
\\
\times \sum_{j,r=1}^n\left( \frac{\partial u^{\rho,t_0,p,q-1}_r}{\partial x_j}\frac{\partial u^{\rho,t_0,p,q-1}_j}{\partial x_r}\right) (s,y)G_{p,j}(\sigma_p,.;s,z)dydzds{\Big |}_{H^{m-1}\cap C^{m-1}}.
\end{array}
\end{equation}
For the first term on the right side we have that for each $\epsilon >0$ that there is a generic $\rho>0$ such that
\begin{equation}
{\big |}\int_{{\mathbb R}^n}u^{\rho,t_0,p-1}_i((p-1)\Delta,y)G_{p}(\sigma_p,.;(p-1)\Delta,y)dy{\big |}_{H^m\cap C^m}\leq C+\epsilon
\end{equation}
This means that in order to prove the upper bound $C>0$ for ${\big |}u^{\rho,t_0,p,q}_i(p\Delta,.){\big |}_{H^m\cap C^m}$ it is sufficient to prove prove that for $\rho >0$ small enough the sum of the last two terms on the right side of (\ref{aa}) is strictly smaller than the modulus of the potential term, i.e., smaller than $ c_{\mu,t_0} \Delta^{\alpha}{\big |} u^{\rho,t_0,p,q-1}_{i}(\sigma_{p-1},.){\big |}_{H^m\cap C^m}$, where we recall that $\Delta^{\alpha}=\left(\sigma_p-\sigma_{p-1}\right)^{\alpha}$. In order to analyze these terms we consider the spatial convolutions first. For each fix $s\in [\sigma_{p-1},\sigma_p]$ we may splitting the spatial integrals in a local part and its complement in ${\mathbb R}^n$ and using the fact that the first order spatial derivative $G_{p,j}$ of the fundamental solution locally satisfies
\begin{equation}\label{gpj}
{\Big |}G_{p,j}(\sigma_p,x;s,z){\big |}\leq \frac{c}{|\sigma_p-s|^{\delta}{\big |}x-z{\big |}^{n+1-2\delta}}
\end{equation}
for some $\delta \in (0.5,1)$ and some $c>0$. Hence, $G_{p,j}$ is locally $L^1$ with respect to time by (\ref{gpj}) (where it is sufficient to consider the Gaussian and its derivatives on an open interval with respect to time where it is regular) and certainly globally $L^1$ with respect to space, where we may denote the $l^1$ upper bound with respect to all variables by $C_G$. We have (recall again that $\Delta =(\sigma_p-\sigma_{p-1})$)
\begin{equation}
\begin{array}{ll}
{\Big |}\int_{(p-1)\Delta}^{p\Delta}\int_{{\mathbb R}^n}\sum_{j=1}^n \left( u^{\rho,t_0,p,q-1}_j\frac{\partial u^{\rho,t_0,p,q-1}_i}{\partial x_j}\right)(s,y)G_{p,j}(\sigma_p,.;s,y)dyds{\Big |}_{H^{m-1}\cap C^{m-1}}\\
\\
\leq nC_G\Delta^{1-\delta}\max_{1\leq j\leq n}\sup_{s\in [\sigma_{p-1},\sigma_p]}{\Big |}\left( u^{\rho,t_0,p,q-1}_j u^{\rho,t_0,p,q-1}_i\right)(s,.)ds{\Big |}_{H^{m}\cap C^{m}}\\
\\
\leq nC_G\Delta^{1-\delta}
C_m \left( \max_{1\leq j\leq n}\sup_{s\in [\sigma_{p-1},\sigma_p]}\left( {\Big |} u^{\rho,t_0,p,q-1}_j(s,.){\Big |}_{H^m\cap C^m}\right)\right)  \times\\
\\ 
\times \left( \sup_{s\in [\sigma_{p-1},\sigma_p]}
{\Big |}u^{\rho,t_0,p,q-1}_i(s,.){\Big |}_{H^m\cap C^m}\right) 
\end{array}
\end{equation}
where for $m\geq 2$ the constant $C_m$ is the constant determind in standard estimates for products of functions in strong Sobolev spaces by this constant times the product of the same Sobolev norms of its factors. In order to get a choice for $\rho >0$ we need another estimate for the Leray projection term. This is similar but we have an additional Laplacian kernel. Hence, we first get
\begin{equation}
\begin{array}{ll}
{\Big |}\int_{(p-1)\Delta}^{p\Delta}\int_{{\mathbb R}^n} \sum_{j,r=1}^n\int_{{\mathbb R}^n}\left( \frac{\partial}{\partial x_i}K_n(z-y)\right)\times\\
\\
\times \sum_{j,r=1}^n\left( \frac{\partial u^{\rho,t_0,p,q-1}_r}{\partial x_j}\frac{\partial u^{\rho,t_0,p,q-1}_j}{\partial x_r}\right) (s,y)G_{p,j}(\sigma_p,.;s,z)dydzds{\Big |}_{H^{m-1}\cap C^{m-1}}\\
\\
\leq n^2C_G\Delta^{1-\delta}\max_{1\leq j,r\leq n}\sup_{s\in \left[\sigma_{p-1},\sigma_p \right] }
{\Big |}\int_{{\mathbb R}^n}\left( \frac{\partial}{\partial x_i}K_n(.-y)\right)\times\\
\\
\times \left( u^{\rho,t_0,p,q-1}_ru^{\rho,t_0,p,q-1}_j\right) 
(s,y)dy{\Big |}_{H^{m}\cap C^{m}}\\
\\
\leq n^2C_G\Delta^{1-\delta}C_KC_m\max_{1\leq r\leq n}\sup_{s\in \left[\sigma_{p-1},\sigma_p \right] }
{\Big |} u^{\rho,t_0,p,q-1}_r(s,.){\Big |}_{H^m\cap C^m}\times\\
\\
\times \max_{1\leq j\leq n}\sup_{s\in \left[\sigma_{p-1},\sigma_p \right] }{\Big |}u^{\rho,t_0,p,q-1}_j
(s,.){\Big |}_{H^{m}\cap C^{m}}.
\end{array}
\end{equation}
Here $C_K$ can be chosen to be be the sum of the local $L^1$-norm of first order derivative kernel $K_{n,i}$ (restricted to a bounded ball) and the $L^2$-norm outside this ball of $K_{n,i}\sim\frac{x_i}{|x|^n}$.
Hence, from (\ref{aa}) and the preceding considerations we get for each $\epsilon >0$ a $\rho>0$ such that
\begin{equation}\label{ab}
\begin{array}{ll}
{\big |}u^{\rho,t_0,p,q}_i(p\Delta,.){\big |}_{H^m\cap C^m}\\
\\
\leq C+\epsilon
-c_{\mu,t_0} (\sigma_p-\sigma_{p-1})^{\alpha}{\big |} u^{\rho,t_0,p,q-1}_{i}(\sigma_{p-1},.){\big |}_{H^m\cap C^m}\\
\\
+\rho c^{\mu,t_0}nC_G\Delta^{1-\delta}
C_m \left( \max_{1\leq j\leq n}\sup_{s\in [\sigma_{p-1},\sigma_p]}\left( {\Big |} u^{\rho,t_0,p,q-1}_j(s,.){\Big |}_{H^m\cap C^m}\right)\right)  \times\\
\\ 
\times \left( \sup_{s\in [\sigma_{p-1},\sigma_p]}
{\Big |}u^{\rho,t_0,p,q-1}_i(s,.){\Big |}_{H^m\cap C^m}\right) \\
\\
+ \rho c^{\mu,t_0}n^2C_G\Delta^{1-\delta}C_KC_m\max_{1\leq r\leq n}\sup_{s\in \left[\sigma_{p-1},\sigma_p \right] }
{\Big |} u^{\rho,t_0,p,q-1}_r(s,.){\Big |}_{H^m\cap C^m}\times\\
\\
\times \max_{1\leq j\leq n}\sup_{s\in \left[\sigma_{p-1},\sigma_p \right] }{\Big |}u^{\rho,t_0,p,q-1}_j
(s,.){\Big |}_{H^{m}\cap C^{m}}.
\end{array}
\end{equation}
Using the induction hypothesis (with respect to the iteration index $p$) we have
\begin{equation}\label{ac}
\begin{array}{ll}
{\big |}u^{\rho,t_0,p,q}_i(p\Delta,.){\big |}_{H^m\cap C^m}\\
\\
\leq C+\epsilon
-c_{\mu,t_0} (\sigma_p-\sigma_{p-1})^{\alpha}{\big |} u^{\rho,t_0,p,q-1}_{i}(\sigma_{p-1},.){\big |}_{H^m\cap C^m}\\
\\
+\rho c^{\mu,t_0}nC_G\Delta^{1-\delta}
C_m (1+nC_K)C^2.
\end{array}
\end{equation}
Now we may assume without loss of generality that
\begin{equation}\label{assrhot0qp}
{\big |} u^{\rho,t_0,p,q-1}_{i}(\sigma_{p-1},.){\big |}_{H^m\cap C^m}\geq \frac{C}{2},
\end{equation}
as we could replace the first $C>0$ on the right side of (\ref{ac}) by $\frac{C}{2}$ and come to a similar conclusion. Under the assumption (\ref{assrhot0qp}) we get 
\begin{equation}\label{ac}
\begin{array}{ll}
{\big |}u^{\rho,t_0,p,q}_i(p\Delta,.){\big |}_{H^m\cap C^m}\leq C~
\mbox{if}~\rho \leq \frac{c_{\mu,t_0} (\sigma_p-\sigma_{p-1})^{\alpha}\frac{C}{2}-\epsilon}{c^{\mu,t_0}nC_G\Delta^{1-\delta}C_m (1+nC_K)C^2}.
\end{array}
\end{equation} 
recall again that $\sigma_p-\sigma_{p-1}=\Delta$ such that
\begin{equation}\label{ac}
\begin{array}{ll}
{\big |}u^{\rho,t_0,p,q}_i(p\Delta,.){\big |}_{H^m\cap C^m}\leq C~
\mbox{if}~\rho \leq \frac{c_{\mu,t_0} \Delta^{\alpha+\delta-1}\frac{C}{2}-\epsilon}{c^{\mu,t_0}nC_GC_m (1+nC_K)C^2}.
\end{array}
\end{equation} 
As $\epsilon >0$ is arbitrary this concludes the proof in the discrete case, while for $\alpha\geq \delta >\frac{1}{2}$ the same conclusion can be drawn for all $\sigma\in [0,c_{\sigma_e}]$ (corresponding to $\tau\in \left[t_0,t_0+c_e \right]$), where we are free to choose an appropriate scaling for $\epsilon >0$. As the factor $\sqrt{1-\tau_{t_0}^2}^3$ appears in all relevant parts of the equation the prove can be extended to the whole domain $\sigma\in [0,\infty)$ (corresponding to $\tau\in [t_0,t_0+1)$ and $t\in [t_0,t_0+\rho)$.
As the preservation constant $C>0$ is indpendent of the iteration number $p\geq 1$ we have proved the lemma for all $p\geq 1$ by induction. As the series $u^{\rho,t_0,p,q}_i,~1\leq i\leq n,~p\geq 1$ exists in strong Sobolev-type spaces with a uniform upper bound it is then easy to conclude (either by compactness or by a contraction argument) that
\begin{equation}
u^{\rho,t_0,p}_i:=\lim_{q\uparrow \infty}u^{\rho,t_0,p,q}_i,~1\leq i\leq n
\end{equation}
satisfies the claim of the Lemma and is a local solution of damped equation.

\end{proof}

\end{document}